\newtheorem{theorem}{Theorem}[section]
\newtheorem{lemma}[theorem]{Lemma}
\newtheorem{proposition}[theorem]{Proposition}
\newtheorem{corollary}[theorem]{Corollary}
\newtheorem{remark}[theorem]{Remark}
\newtheorem{example}[theorem]{Example}
\title{Spectrum of composition operators on ${\mathcal S}({\mathbb R})$ with polynomial symbols}
\author{Carmen Fern\'andez, Antonio Galbis, Enrique Jord\'a\footnote{The present research was partially supported by the projects  MTM2016-76647-P, ACOMP/2015/186  (Spain). The third  author was partially supported by GVA, Project AICO/2016/054.}}
\begin{document}

\maketitle

\begin{abstract}
We study the spectrum of operators in the Schwartz space of rapidly decreasing functions which associate each function with its composition with a polynomial. In the case where this operator is mean ergodic we prove that its spectrum reduces to $\{0\},$ while the spectrum of any non mean ergodic composition operator with a polynomial always contains the closed unit disc except perhaps the origen. We obtain a complete description of the spectrum of the composition operator with a quadratic polynomial or a cubic polynomial with positive leading coefficient. 
\end{abstract}

\begin{comment}
\begin{keyword}
 Composition operator; Space of rapidly decreasing functions; Spectrum
\MSC 46E10 \sep 47B33 \sep 47A10
\end{keyword}
\end{comment}

\section{Introduction}

Composition operators on Fr\'echet spaces of smooth functions on the reals have attracted the attention of several authors recently (\cite{fernandez_galbis_jorda, galbis_jorda, kw,adam1,adam2, adam2.5}) but to our knowledge very little is known about the spectra of composition operators in this setting. See for instance \cite{bonet_domanski} or \cite{albanese_bonet_werner}, where the spectrum of composition operators and other classical operators on spaces of real smooth functions is investigated. This contrasts with the large number of existing articles studying spectral properties of composition operators in Banach spaces of analytic functions on the unit disc.
\par\medskip
We study the spectrum of composition operators defined in the Schwartz space $\mathcal{S}({\mathbb R})$ of smooth rapidly decreasing functions, $C_\varphi:\mathcal{S}({\mathbb R})\to \mathcal{S}({\mathbb R}), f\mapsto f\circ \varphi,$ in the case that $\varphi$ is a non constant polynomial. A smooth function $\varphi:{\mathbb R}\to {\mathbb R}$ is said to be a symbol for $\mathcal{S}({\mathbb R})$ if $C_\varphi$ maps $\mathcal{S}({\mathbb R})$ continuously into itself. The symbols for $\mathcal{S}({\mathbb R})$ were completely characterized in \cite[Theorem 2.3]{galbis_jorda}. It follows from that characterization that any non constant polynomial $\varphi$ is a symbol for $\mathcal{S}({\mathbb R}).$ The results of the present paper complement our study in \cite{fernandez_galbis_jorda} where we investigate dynamics and the spectrum of some particular composition operators. Concrete examples were given where the spectrum coincides with the open unit disc, the unit circle or ${\mathbb C}\setminus \{0\}.$ In 
particular, the spectrum of translation and dilation operators was analyzed in \cite[Examples 5-6]{fernandez_galbis_jorda}:
\begin{example}{\rm \begin{itemize}
                     \item[(a)] Let $\varphi(x) = x+1.$  Then $\sigma\left(C_\varphi\right) = \{\lambda\in {\mathbb C}:\ |\lambda| = 1\}.$
\item[(b)] Let $\varphi(x) = ax$ where $a\neq 0$ and $|a|\neq 1.$ Then $\sigma\left(C_\varphi\right) = {\mathbb C}\setminus\{0\}.$
                    \end{itemize}

} 
\end{example}
However, in \cite{fernandez_galbis_jorda} we did not obtain any result concerning the spectrum of the composition operator with a polynomial of degree greater than one. This is precisely the objective of this work. It turns out that some  dynamical properties of the composition operator are characterized by the spectrum of the operator. For example, the spectrum of a mean ergodic composition operator is always contained in the closed unit disk (\cite[Corollary 4.5]{fernandez_galbis_jorda}). This result can be improved when the symbol is a polynomial. As we prove in Theorem \ref{th:withoutfixedpoints}, the spectrum of a composition operator which is mean ergodic and whose symbol is a polynomial with degree greater than one coincides with $\{0\}$ while the behavior of non mean ergodic composition operators with polynomial symbols is different (Theorem \ref{th:polynomial_fixed_point}). For strictly decreasing symbols, not necessarily polynomials, the containment of $\partial {\mathbb D}$ in the spectrum of the 
operator is equivalent to its mean ergodicity. 
\par\medskip
In \cite{bonet_domanski} the spectrum of composition operators on ${\mathcal A}({\mathbb R})$, the space of all real analytic functions, is investigated for the case that the symbol is a quadratic polynomial. For quadratic polynomials, we have a complete characterization of the spectrum of the corresponding composition operator depending on the number of fixed points of the polynomial. As expected, the spectrum of the operator depends on the space where it is considered. To give an example, when $\varphi$ is a quadratic polynomial without fixed points then $\sigma_{{\mathcal A}({\mathbb R})}(C_\varphi) = {\mathbb C},$ whereas $\sigma_{{\mathcal S}({\mathbb R})}(C_\varphi) = \{0\}.$
\par\medskip
Theorem \ref{th:cubic-positive} contains a complete description of the spectrum of a composition operator with a polynomial of degree three whose leading coefficient is positive. For polynomials with negative leading coefficient some partial results are available but we lack a complete characterization.
\par\medskip
The final section contains some results concerning the spectra of composition operators with (non polynomial) monotone symbols.
\par\medskip
We recall that given an operator $T:X\to X$ on a Fr\'echet space $X,$ $\sigma(T),$ the spectrum of $T,$ is the set of all $\lambda\in {\mathbb C}$ such that $T - \lambda I:X\to X$ does not admit a continuous linear inverse. $T$ is said to be power bounded if $\{T^n(x):\ n\in {\mathbb N}\}$ is bounded for each $x\in X.$ A closely related concept to power boundedness is that of {\em mean ergodicity}. Given $T\in L(X)$, the Ces\`aro means of $T$ are defined as $T_{[n]}=\sum_{k=1}^{n}T^k/n$. $T$ is said to be mean ergodic when $T_{[n]}$ converges to an operator $P$, which is always a projection, in the strong operator topology, i.e. if $(T_{[n]}(x))$ is convergent to $P(x)$ for each $x\in X$. 
\par\medskip
From now on $\varphi_n = \varphi\circ\ldots\circ\varphi$ denotes the $n$-th iteration of $\varphi.$ 
\par\medskip 
The following results will be used in what follows.

\begin{lemma}{\rm \cite[3.10]{fernandez_galbis_jorda}\label{lem:cota-iteradas} Let $\varphi$ be a polynomial of even degree without fixed points. Then there is $N\in {\mathbb N}$ such that
$\psi = \varphi_{N}$ has neither zeros nor fixed points. Moreover, for every $K > 0$ there is $m_0\in {\mathbb N}$ such that
$$
\left|\psi_{m+1}(t)\right| \geq K \left(\psi_{m}(t)\right)^2\ \ \forall m\geq m_0,\ \ \ \forall t\in {\mathbb R}.
$$
}
\end{lemma}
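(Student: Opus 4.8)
The plan is to reduce everything to a sign analysis of $\varphi(x)-x$. Since $\varphi$ has even degree $d\geq 2$ and no fixed points, the polynomial $g(x)=\varphi(x)-x$ has even degree and no real roots, hence is of constant sign; thus either $\varphi(x)>x$ for all $x$ or $\varphi(x)<x$ for all $x$. I would carry out the first case in detail and obtain the second by the symmetric argument. In the first case the leading coefficient of $\varphi$ must be positive, since otherwise $\varphi(x)>x$ would fail as $x\to+\infty$; consequently $g$ attains a positive minimum $c>0$, so that $\varphi(x)\geq x+c$ for every $x$.

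Next I would control the minima of the iterates. Writing $m_n=\min_{x}\varphi_n(x)$ (attained, since each $\varphi_n$ has even degree and positive leading coefficient), the estimate $\varphi_{n+1}(x)=\varphi(\varphi_n(x))\geq \varphi_n(x)+c\geq m_n+c$ valid for all $x$ yields $m_{n+1}\geq m_n+c$, whence $m_n\to+\infty$. Moreover, iterating $\varphi(x)>x$ gives $\varphi_n(x)>x$ for every $n$ and every $x$, so no iterate of $\varphi$ has a fixed point. I then fix $N$ large enough that simultaneously $m_N>0$ (so that $\psi:=\varphi_N$ has no zeros) and $\deg\psi=d^N\geq 3$; both hold from some $N$ on, because $d\geq 2$ and because enlarging $N$ only increases $m_N$. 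The resulting $\psi$ has even degree $\geq 3$ and positive leading coefficient, is everywhere positive, and satisfies $\psi(x)>x$ with no fixed points.

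For the ``moreover'' part I would use that $\psi_m=\varphi_{Nm}$, so that $\mu_m:=\min_t\psi_m(t)=m_{Nm}\to+\infty$, and that every iterate $\psi_m$ is strictly positive. Fix $K>0$. Since $\deg\psi\geq 3$, we have $\psi(s)/s^2\to+\infty$ as $s\to+\infty$, so there is $S_K$ with $\psi(s)\geq K s^2$ for all $s\geq S_K$. Choosing $m_0$ with $\mu_{m_0}\geq S_K$, for every $m\geq m_0$ and every $t$ we get $\psi_m(t)\geq \mu_m\geq S_K$, hence $|\psi_{m+1}(t)|=\psi(\psi_m(t))\geq K\bigl(\psi_m(t)\bigr)^2$, as required. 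In the case $\varphi(x)<x$ the symmetric argument uses $\psi<0$, $\max_t\psi_m(t)\to-\infty$, and $-\psi(s)/s^2\to+\infty$ as $s\to-\infty$.

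The crux of the argument, and the only place where passing to an iterate is essential, is guaranteeing $\deg\psi\geq 3$: for a quadratic $\psi$ the ratio $\psi(s)/s^2$ tends to a finite constant, so the displayed inequality would fail for large $K$. The remaining points are routine bookkeeping, namely the constant sign of $\varphi(x)-x$, the monotone growth $m_{n+1}\geq m_n+c$, and the uniformity in $t$, which comes for free from taking minima (resp.\ maxima) over all of $\mathbb{R}$.
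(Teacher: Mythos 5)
Your proof is correct. One contextual point: the paper does not actually prove this lemma --- it is quoted from \cite[Lemma 3.10]{fernandez_galbis_jorda} --- so there is no in-paper argument to compare yours against; what you have written is a complete, self-contained proof. The chain of reductions is sound: the sign dichotomy for $\varphi(x)-x$ (no real zeros, hence constant sign), the uniform gap $\varphi(x)\geq x+c$ coming from the positive minimum of $\varphi(x)-x$ in the case $\varphi(x)>x$, the monotone divergence $m_{n+1}\geq m_n+c$ of the minima of the iterates (which simultaneously yields absence of zeros for large $N$, absence of fixed points of every iterate via $\varphi_n(x)>x$, and the uniformity in $t$ of the final estimate), and the choice of $N$ making $\deg\varphi_N\geq 4$, so that $\psi(s)/s^2\to+\infty$ and the displayed inequality holds for every $K$ once the minima exceed your threshold $S_K$. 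Your closing observation is also the right one: for quadratic $\psi$ the ratio $\psi(s)/s^2$ tends to the (finite) leading coefficient, so the ``for every $K$'' statement genuinely fails without passing to an iterate of sufficiently high degree. A minor economy: instead of running the symmetric argument in the case $\varphi(x)<x$, you could reduce it to the first case via $\tilde\varphi(x)=-\varphi(-x)$, noting that $\tilde\varphi_n(x)=-\varphi_n(-x)$ leaves both $|\psi_{m+1}(t)|$ and $\left(\psi_m(t)\right)^2$ unchanged; this linear-equivalence trick is the one the paper itself uses repeatedly, e.g.\ in part (b)(i) of the proof of Theorem \ref{th:polynomial_fixed_point}.
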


\begin{theorem}{\rm \cite[3.11]{fernandez_galbis_jorda} Let $\varphi$ be a polynomial with degree greater than or equal to two. Then, the following are equivalent:
\begin{itemize}
\item[(1)] $C_\varphi$ is power bounded.
\item[(2)] $C_\varphi$ is mean ergodic.
\item[(3)] The degree of $\varphi$ is even and it has no fixed points.
\end{itemize}
} 
\end{theorem}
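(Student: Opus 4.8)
The plan is to prove the cycle (3)$\Rightarrow$(1)$\Rightarrow$(2)$\Rightarrow$(3). A preliminary reduction makes condition (3) easier to use: since $\varphi-\mathrm{id}$ has odd degree whenever $\deg\varphi$ is odd, an odd-degree polynomial always has a real fixed point; and if $\varphi$ has no fixed point then $\varphi-\mathrm{id}$ keeps a constant sign, which forces $\deg\varphi$ to be even (with positive leading coefficient, once one notes $\varphi(x)>x$ for all $x$). Hence (3) is equivalent to ``$\varphi$ has no fixed point'', and its negation, which I will use in the contrapositive of (2)$\Rightarrow$(3), is simply ``$\varphi$ has a fixed point''.

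For (3)$\Rightarrow$(1) I would exploit Lemma~\ref{lem:cota-iteradas}. It suffices to bound the iterates of $\psi=\varphi_N$ uniformly, because the finitely many remaining compositions $C_\varphi^{0},\dots,C_\varphi^{N-1}$ are continuous. The quadratic lower bound $|\psi_{m+1}(t)|\ge K(\psi_m(t))^2$ forces $|\psi_m(t)|\to\infty$ at an enormous, essentially uniform rate. To control a Schwartz seminorm $\sup_t |t^k (f\circ\psi_m)^{(l)}(t)|$ independently of $m$, I would expand $(f\circ\psi_m)^{(l)}$ by Fa\`a di Bruno: each summand is a derivative $f^{(j)}(\psi_m(t))$ multiplied by a product of derivatives of $\psi_m$, which are polynomials in $t$. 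The rapid decay of $f^{(j)}$ evaluated at the large argument $\psi_m(t)$ dominates the polynomial factor $t^k$ as well as the polynomial derivatives of $\psi_m$, and the estimate of the Lemma is exactly the input that lets one absorb the degrees of those polynomials and obtain a bound uniform in $m$. I expect this Fa\`a di Bruno bookkeeping to be the main obstacle: one has to separate the region where $t$ (and hence $\psi_m(t)$) stays bounded from the region where it is large, and track how the growing degrees of $\psi_m^{(i)}$ are beaten by $|\psi_m(t)|\ge K(\psi_{m-1}(t))^2$.

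The implication (1)$\Rightarrow$(2) is soft and requires nothing specific about composition operators: $\mathcal{S}({\mathbb R})$ is a Fr\'echet--Montel space, and on such spaces every power bounded operator is mean ergodic (indeed uniformly mean ergodic).

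For (2)$\Rightarrow$(3) I would argue by contraposition, using the elementary necessary condition that a mean ergodic $T$ satisfies $\tfrac1n T^n x\to 0$ for every $x$; concretely, $\tfrac1n (C_\varphi^n f)^{(l)}(x)\to 0$ for all $f\in\mathcal{S}({\mathbb R})$, all $x$ and all $l$. Assuming $\varphi$ has a fixed point, a sign analysis of $g=\varphi-\mathrm{id}$ at its outermost real root produces a periodic point $y_0$ of period $p\in\{1,2\}$ whose cycle multiplier satisfies $\mu\ge 1$: at the outermost root $g$ rises to $\pm\infty$, so $g'\ge 0$ there, giving multiplier $\ge 1$; in the one case where this fails directly (odd degree with negative leading coefficient) one first replaces $\varphi$ by $\varphi_2$, which then has positive leading coefficient. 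Since $\varphi_{pk}(y_0)=y_0$, the chain rule gives $(C_\varphi^{pk}f)'(y_0)=f'(y_0)\,\mu^{k}$. If $\mu>1$, choosing $f$ with $f'(y_0)\neq 0$ makes $\tfrac{1}{pk}(C_\varphi^{pk}f)'(y_0)$ unbounded, contradicting mean ergodicity. If $\mu=1$ the point is parabolic; writing the cycle map near $y_0$ as $y_0+(x-y_0)+a_m(x-y_0)^m+\cdots$ with $a_m\neq 0$, the $(x-y_0)^m$-coefficient of its $k$-th iterate equals $k\,a_m$, so the surviving term of the $m$-th derivative yields $\tfrac{1}{pk}(C_\varphi^{pk}f)^{(m)}(y_0)\to \tfrac1p f'(y_0)\,m!\,a_m\neq 0$, again a contradiction. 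The delicate point in this direction is precisely this parabolic case, together with the reduction guaranteeing a multiplier $\ge 1$ on a real cycle, which is why passing to $\varphi_2$ is needed.
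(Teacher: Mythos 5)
First, a caveat: the paper does not actually prove this statement; it is recalled verbatim from \cite[Theorem 3.11]{fernandez_galbis_jorda}, so there is no in-paper proof to compare against, and I assess your argument on its own merits.

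Your architecture (3)$\Rightarrow$(1)$\Rightarrow$(2)$\Rightarrow$(3) is sound, and two of the three implications are essentially complete. The reduction of (3) to ``$\varphi$ has no fixed point'' is correct. (1)$\Rightarrow$(2) is indeed soft: ${\mathcal S}({\mathbb R})$ is Fr\'echet--Montel, hence reflexive, and power bounded operators on such spaces are mean ergodic. Your (2)$\Rightarrow$(3) is correct and pleasantly elementary: the sign analysis does produce a real periodic point $y_0$ of period $p\in\{1,2\}$ with multiplier $\mu=(\varphi_p)'(y_0)\geq 1$ (using $\varphi_2$ in the odd-degree, negative-leading-coefficient case); the criterion $T^nf/n\to 0$ is a valid necessary condition for mean ergodicity, and convergence in ${\mathcal S}({\mathbb R})$ controls all derivatives pointwise, so the case $\mu>1$ is immediate. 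In the parabolic case $\mu=1$ your computation checks out: writing $\varphi_p(x)=y_0+(x-y_0)+a_m(x-y_0)^m+\cdots$ with $m\geq 2$ minimal and $a_m\neq 0$ (possible since $\varphi_p$ has degree $\geq 2$), the $k$-th iterate has $m$-th Taylor coefficient $ka_m$, and since $\varphi_{pk}^{(i)}(y_0)=0$ for $2\leq i\leq m-1$ the intermediate Bell polynomials vanish, giving $(f\circ\varphi_{pk})^{(m)}(y_0)=f'(y_0)\,m!\,ka_m+f^{(m)}(y_0)$, which contradicts $T^nf/n\to 0$ when $f'(y_0)\neq 0$.

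The genuine gap is in (3)$\Rightarrow$(1), which is where all the analytic content of the theorem lives and which you explicitly leave as ``bookkeeping''. What is missing is precisely the pair of uniform-in-$m$ estimates $\left|\psi_m^{(n)}(t)\right|\leq C_n\left(1+|\psi_m(t)|\right)^{q_n}$ and $|t|\leq\left(1+|\psi_m(t)|\right)^{q}$ with $C_n,q_n,q$ independent of $m$, i.e.\ conditions (i)--(ii) of Lemma \ref{lem:convergencia_serie} with $r=1$, which the paper notes are \emph{equivalent} to power boundedness by \cite[Proposition 3.9]{fernandez_galbis_jorda}. The point you gloss over is that rapid decay of $f^{(j)}$ at the argument $\psi_m(t)$ only beats a \emph{fixed} power of $1+|\psi_m(t)|$, whereas $\psi_m^{(n)}$ is a polynomial of degree comparable to $d^m$ with coefficients growing in $m$; one must genuinely prove it is dominated by a fixed power of $1+|\psi_m(t)|$. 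That requires the product formula $\psi_m'=\prod_{j<m}\psi'(\psi_j)$, the bound $1+|\psi_j(t)|\leq 2\left(1+|\psi_m(t)|\right)^{2^{-(m-j)}}$ obtained from Lemma \ref{lem:cota-iteradas}, and the absorption of the resulting $C^m$ factors into $\left(1+|\psi_m(t)|\right)^{\varepsilon_m}$ with $\varepsilon_m\to 0$, using the doubly exponential lower bound $|\psi_m(t)|\geq a^{2^{m-1}}$; higher derivatives then need an induction on $n$ on top of this. That this is not routine can be seen from Lemmas \ref{lem:quadratic1} and \ref{lem:quadratic2}, where the paper spends substantial effort to get the analogous (and weaker, $r>1$) bounds for the single polynomial $x^2+\frac14$. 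A minor, harmless slip elsewhere: your parenthetical claim that fixed-point-freeness forces a positive leading coefficient and $\varphi>\mathrm{id}$ is false (take $\varphi(x)=-x^2-1$), but you never use it.
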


\section{Polynomial symbols}

Two polynomials $\varphi, \psi\in {\mathbb R}[x]$ are {\it linearly equivalent} if there exists $\ell(x) = ax + b$ with $a,b\in {\mathbb R}$ and $a\neq 0$ such that $\psi = \ell^{-1}\circ \varphi\circ \ell.$ Then, for every $\lambda\in {\mathbb C},$
$$
C_\varphi - \lambda I = C_\ell^{-1}\circ\left(C_\psi - \lambda I\right)\circ C_\ell,
$$ from where it follows that $\sigma\left(C_\psi\right) = \sigma\left(C_\varphi\right).$

\par\medskip
The first result follows immediately from this observation and \cite[Examples 5-6]{fernandez_galbis_jorda} as each polynomial of degree one other than the identity is linearly equivalent to a translation or to a dilation.

\begin{proposition}{\rm Let $\varphi(x) = ax + b$ a polynomial with $a,b\in {\mathbb R}$ and $a\neq 0.$ Then
\begin{itemize}
 \item[(a)] For $a\neq 1,$ $\varphi$ is linearly equivalent to $\psi(x) = ax.$  Hence $\sigma\left(C_\varphi\right) = {\mathbb C}\setminus\left\{0\right\}$ for $a\neq \pm 1$ while $\sigma\left(C_\varphi\right) = \left\{-1,1\right\}$ for $a = -1.$
\item[(b)] For $a = 1$ and $b\neq 0,$ $\varphi$ is linearly equivalent to $\psi(x) = x + 1.$ Hence $\sigma\left(C_\varphi\right) = \left\{\lambda\in {\mathbb C}: |\lambda| = 1\right\}.$
\end{itemize}
}
\end{proposition}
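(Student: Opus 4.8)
The plan is to treat this statement as a direct consequence of the linear-equivalence identity displayed just above, namely $\sigma(C_\varphi)=\sigma(C_\psi)$ whenever $\psi=\ell^{-1}\circ\varphi\circ\ell$, combined with the spectra computed in Examples 5--6. So in each case I would exhibit an explicit affine map $\ell(x)=cx+d$ conjugating $\varphi$ to the stated normal form $\psi$, and then quote the known value of $\sigma(C_\psi)$.

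For part (a), since $a\neq 1$ the affine map $\varphi$ has a unique fixed point $x_0=b/(1-a)$. I would take $\ell(x)=x+x_0$ and check by substitution that $\ell^{-1}\circ\varphi\circ\ell(x)=a(x+x_0)+b-x_0=ax$, the cross terms cancelling because $x_0(a-1)+b=0$. Thus $\varphi$ is linearly equivalent to $\psi(x)=ax$ and the spectra coincide. When $a\neq\pm 1$ we have $|a|\neq 1$, so Example (b) gives $\sigma(C_\psi)=\mathbb{C}\setminus\{0\}$, and the claim follows at once.

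The only value of $a$ not covered by the dilation example is $a=-1$, where $|a|=1$; this is the one step that needs a genuinely separate argument. Here $\psi(x)=-x$ satisfies $\psi\circ\psi=\mathrm{id}$, so $C_\psi^2=I$ and $C_\psi$ is an involution on $\mathcal{S}(\mathbb{R})$. Hence every $\lambda\in\sigma(C_\psi)$ must satisfy $\lambda^2=1$, which forces $\sigma(C_\psi)\subseteq\{-1,1\}$; conversely $C_\psi$ fixes every even function and negates every odd function, so $1$ and $-1$ are both eigenvalues and $\sigma(C_\psi)=\{-1,1\}$.

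For part (b), with $a=1$ and $b\neq 0$ the natural conjugating map is the dilation $\ell(x)=bx$, for which $\ell^{-1}\circ\varphi\circ\ell(x)=(bx+b)/b=x+1=\psi(x)$. Thus $\varphi$ is linearly equivalent to the translation $\psi(x)=x+1$, and Example (a) yields $\sigma(C_\varphi)=\sigma(C_\psi)=\{\lambda\in\mathbb{C}:|\lambda|=1\}$. I expect no real difficulty anywhere except the involution case $a=-1$: the rest reduces to a routine substitution plus the quoted spectral computations, which is exactly why the text can describe the result as following ``immediately'' from the preceding observation.
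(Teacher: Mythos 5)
Your proposal is correct and takes essentially the same approach as the paper: conjugate by an explicit affine map to the normal forms $\psi(x)=ax$ or $\psi(x)=x+1$ and quote the spectra from \cite[Examples 5--6]{fernandez_galbis_jorda}, which is exactly the one-line argument the paper gives. The only point where you add something of your own is the case $a=-1$, which the dilation example as reproduced in the introduction (requiring $|a|\neq 1$) does not cover: your involution argument --- $C_\psi^2=I$ yields $(C_\psi-\lambda I)(C_\psi+\lambda I)=(1-\lambda^2)I$, so $\sigma(C_\psi)\subseteq\{-1,1\}$, while even and odd functions realize both eigenvalues --- is a correct, self-contained patch for a step the paper delegates entirely to the citation.
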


From now on we will consider only polynomials of degree greater than one.
\par\medskip
We observe that the following version of the spectral theorem holds in our setting. 

\begin{proposition}{\rm For every symbol $\varphi$ and $N\in {\mathbb N},$
$$
\sigma(C_{\varphi_N}) = \left\{\lambda^N\in {\mathbb C}:\ \lambda\in \sigma(C_{\varphi})\right\}.
$$
} 
\end{proposition}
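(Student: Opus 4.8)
The plan is to recognize this as the polynomial spectral mapping theorem for the single polynomial $p(z) = z^N$. First I would record the composition law for these operators: since $C_\varphi(C_\psi f) = (f\circ\psi)\circ\varphi = f\circ(\psi\circ\varphi)$, one has $C_\varphi\circ C_\psi = C_{\psi\circ\varphi}$, and iterating gives $(C_\varphi)^N = C_{\varphi_N}$ because $\varphi_N$ is the $N$-fold self-composition of $\varphi$. Writing $T = C_\varphi\in L(\mathcal{S}(\mathbb{R}))$, the assertion becomes
$$
\sigma(T^N) = \left\{\lambda^N:\ \lambda\in\sigma(T)\right\} = p\bigl(\sigma(T)\bigr),
$$
so the whole statement reduces to proving the spectral mapping identity $\sigma(p(T)) = p(\sigma(T))$ directly in the Fréchet setting, without invoking Banach-space machinery.

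The core is a factorization argument. Fixing $\mu\in\mathbb{C}$ and letting $\alpha_1,\ldots,\alpha_N$ denote the $N$-th roots of $\mu$, I would use $z^N - \mu = \prod_{j=1}^{N}(z-\alpha_j)$ to obtain
$$
T^N - \mu I = \prod_{j=1}^{N}\left(T - \alpha_j I\right),
$$
a product of pairwise commuting operators in the unital algebra $L(\mathcal{S}(\mathbb{R}))$. The key purely algebraic fact is that a product of commuting elements in a unital algebra is invertible if and only if every factor is invertible: if the product $A=\prod_j(T-\alpha_j I)$ is invertible, then for each $j$ one writes $A=(T-\alpha_j I)B_j=B_j(T-\alpha_j I)$ with $B_j=\prod_{k\neq j}(T-\alpha_k I)$ commuting with $T-\alpha_j I$, so $A^{-1}B_j$ and $B_jA^{-1}$ provide a two-sided inverse of $T-\alpha_j I$; the converse is immediate. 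Hence $\mu\in\sigma(T^N)$ exactly when some $\alpha_j\in\sigma(T)$, i.e. exactly when $\mu$ has an $N$-th root lying in $\sigma(T)$, which is precisely the condition $\mu\in p(\sigma(T))$.

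The step requiring care, and the main point where the Fréchet setting must be addressed, is the identification of ``invertible in $L(\mathcal{S}(\mathbb{R}))$'' with ``bijective''. Since $\mathcal{S}(\mathbb{R})$ is a Fréchet space, the open mapping theorem applies: every continuous linear bijection of $\mathcal{S}(\mathbb{R})$ onto itself is a topological isomorphism, so its inverse is automatically continuous. Thus $S\in\sigma(T)^c$ means simply that $T-\lambda I$ is bijective, and the operators $A^{-1}B_j$ built above are continuous because $A^{-1}$ is continuous by the open mapping theorem and the $B_j$ are products of continuous operators. I expect this to be the only genuine obstacle: the Banach-space proof of spectral mapping via holomorphic functional calculus and resolvent estimates is unavailable here, so the argument must rest entirely on the commutation factorization together with completeness of $\mathcal{S}(\mathbb{R})$ through the open mapping theorem. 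Once these two ingredients are in place, the equivalence above holds for every $\mu$, yielding $\sigma(C_{\varphi_N})=\sigma(T^N)=\{\lambda^N:\lambda\in\sigma(C_\varphi)\}$.
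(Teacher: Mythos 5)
Your proof is correct and follows essentially the same route as the paper: the paper likewise writes $C_{\varphi_N} - \mu I = C_\varphi^N - \mu I = \prod_{j=1}^{N}\left(C_\varphi - \lambda_j I\right)$ over the $N$-th roots $\lambda_j$ of $\mu$ and concludes from this factorization. Your write-up merely makes explicit the two ingredients the paper leaves implicit (the commuting-factors invertibility lemma and the open mapping theorem identifying invertibility with bijectivity in $L(\mathcal{S}(\mathbb{R}))$), both of which are handled correctly.
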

\begin{proof}
 For every $\mu\in {\mathbb C}\setminus\{0\}$ let $\lambda_1,\ldots, \lambda_N$ denote its $N$-roots. Then 
$$
C_{\varphi_N} - \mu I = C_\varphi^N - \mu I = \left(C_\varphi - \lambda_1 I\right)\cdot \cdot \cdot \left(C_\varphi - \lambda_n I\right),
$$ from where the conclusion follows.
\end{proof}

We also recall the following elementary properties, which will be used in what follows. 

\begin{proposition}\label{prop:elementary}{\rm Let $\varphi$ be a symbol for ${\mathcal S}({\mathbb R}).$ 
\begin{itemize}
\item[(a)] If $\varphi$ admits fixed points then $1\in \sigma(C_\varphi).$
\item[(b)] If $a$ is a fixed point of $\varphi$ then $\varphi'(a)\in \sigma(C_\varphi).$ 
\item[(c)] If $\varphi$ is a polynomial then $0\in \sigma(C_\varphi)$ if and only if $\varphi'$ vanishes at some point.
 \end{itemize}
}
\end{proposition}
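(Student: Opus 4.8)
The plan is to prove the three elementary spectral properties by exhibiting, for each claimed spectral value, an explicit obstruction to the invertibility of $C_\varphi - \lambda I$ on $\mathcal{S}(\mathbb{R})$. For parts (a) and (b) I would work with eigenvalues directly (or with approximate eigenvectors / failure of surjectivity), and for part (c) I would analyze injectivity and surjectivity of $C_\varphi$ itself.

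\smallskip

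For part (a), suppose $\varphi(a) = a$. The natural idea is that a function which is constant should behave nicely under composition: indeed, if $f$ is the constant function $1$, then $C_\varphi f = f$, but constants do not lie in $\mathcal{S}(\mathbb{R})$, so this is only a heuristic. Instead I would argue via non-surjectivity of $C_\varphi - I$. Evaluation at the fixed point gives a continuous linear functional $\delta_a \in \mathcal{S}(\mathbb{R})'$ satisfying $(C_\varphi f)(a) = f(\varphi(a)) = f(a)$, i.e. $\delta_a \circ C_\varphi = \delta_a$, hence $\delta_a \circ (C_\varphi - I) = 0$. Thus the range of $C_\varphi - I$ lies in the kernel of the nonzero functional $\delta_a$ and cannot be dense, so $C_\varphi - I$ is not surjective and $1 \in \sigma(C_\varphi)$.

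\smallskip

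For part (b), the key is the chain rule at the fixed point. Differentiating $(C_\varphi f)(x) = f(\varphi(x))$ and evaluating at $a$ gives $(C_\varphi f)'(a) = f'(\varphi(a))\varphi'(a) = \varphi'(a) f'(a)$. Writing $\delta_a' : g \mapsto g'(a)$, which is again a continuous linear functional on $\mathcal{S}(\mathbb{R})$, this reads $\delta_a' \circ C_\varphi = \varphi'(a)\,\delta_a'$. Hence $\delta_a' \circ (C_\varphi - \varphi'(a) I) = 0$, and exactly as before the range of $C_\varphi - \varphi'(a) I$ is annihilated by the nonzero functional $\delta_a'$, so this operator is not surjective and $\varphi'(a) \in \sigma(C_\varphi)$. (One should note that $\delta_a'$ is nonzero and continuous on $\mathcal{S}(\mathbb{R})$, which is immediate.)

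\smallskip

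For part (c), I would split into the two implications. If $\varphi'$ vanishes at some point $c$, then $\varphi$ fails to be locally injective near $c$, and I would show $C_\varphi$ is not surjective: any $g$ in the range satisfies $g = f\circ\varphi$, and the condition $\varphi'(c)=0$ forces $g'(c) = f'(\varphi(c))\varphi'(c) = 0$, so the range is contained in $\ker \delta_c'$ and $0 \in \sigma(C_\varphi)$. Conversely, if $\varphi'$ never vanishes then $\varphi$ is a strictly monotone polynomial of degree one (since a polynomial of degree $\geq 2$ always has a critical point), and $C_\varphi$ is then invertible with inverse $C_{\varphi^{-1}}$, giving $0 \notin \sigma(C_\varphi)$. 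The main obstacle here is the converse: I must confirm that $\varphi' \neq 0$ everywhere forces $\deg\varphi = 1$ — which is precisely why the hypothesis "$\varphi$ is a polynomial" appears — and then verify that $\varphi^{-1}$ is again a symbol for $\mathcal{S}(\mathbb{R})$ so that $C_{\varphi^{-1}}$ is a genuine continuous inverse on the Schwartz space.
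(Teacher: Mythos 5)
Parts (a), (b) and the forward implication of (c) are correct, and they are essentially the paper's own argument: the paper says that every function in the range of $C_\varphi - I$ vanishes at the fixed point $a$ (resp.\ that the derivative of every function in the range of $C_\varphi - \varphi'(a)I$, or of $C_\varphi$, vanishes at $a$, resp.\ at $x_0$); your dual formulation with the functionals $\delta_a$ and $\delta_a'$ is the same obstruction to surjectivity in different notation.

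The converse implication of (c), however, contains a genuine error. Your key claim that ``a polynomial of degree $\geq 2$ always has a critical point'' is false: it holds in even degree (where $\varphi'$ has odd degree and hence a real root), but fails in odd degree $\geq 3$. For example $\varphi(x) = x^3 + x$ has $\varphi'(x) = 3x^2 + 1 > 0$ everywhere. So non-vanishing of $\varphi'$ does \emph{not} force $\deg \varphi = 1$, your reduction to the affine case collapses, and for such $\varphi$ the inverse $\varphi^{-1}$ is not a polynomial (it grows like $x^{1/3}$), so invertibility of $C_\varphi$ cannot be read off by inspection; one would at least have to verify that $\varphi^{-1}$ is a symbol for ${\mathcal S}({\mathbb R})$, which is not immediate. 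The paper argues differently: if $\varphi$ is a polynomial and $\varphi'$ has no real zeros, then $\varphi'$ is either a nonzero constant or a polynomial of even degree without real roots, hence $\inf_{x\in{\mathbb R}}|\varphi'(x)| > 0$; the surjectivity criterion \cite[4.2]{galbis_jorda} then gives that $C_\varphi$ is surjective, injectivity follows from $\varphi({\mathbb R}) = {\mathbb R}$ (a strictly monotone polynomial has odd degree), and the open mapping theorem yields a continuous inverse, so $0\notin\sigma(C_\varphi)$. That the case you missed is substantive is illustrated by Theorem \ref{th:cubic-positive}: for $\varphi(x) = x + x^3$ proving that $C_\varphi$ is surjective requires genuine quantitative estimates, not a formal inversion.
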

\begin{proof}
(a) All the functions in the range of $C_\varphi - I$ vanish at fixed points of $\varphi,$ hence the conclusion.
\par\medskip
(b) In fact, the derivative of any function in the range of $C_\varphi - \varphi'(a)I$ vanishes at point $a,$ hence $C_\varphi - \varphi'(a)I$ is not surjective.
\par\medskip
(c) If $\varphi'$ does not vanish then $\inf_{x\in {\mathbb R}}|\varphi'(x)| > 0$ and $C_\varphi$ is surjective by \cite[4.2]{galbis_jorda}. Moreover $C_\varphi$ is injective as $\varphi({\mathbb R}) = {\mathbb R}.$ Conversely, if $\varphi'(x_0) = 0$ then the derivative of any function in the range of $C_\varphi$ vanishes at point $x_0.$ Hence $0\in \sigma(C_\varphi).$
\end{proof}

\par\medskip
We observe that an arbitrary symbol $\varphi$ for ${\mathcal S}({\mathbb R})$ satisfies conditions (i) and (ii) in the next lemma with $r = 1$ if, and only if, $C_\varphi$ is power bounded. This is the content of \cite[Proposition 3.9]{fernandez_galbis_jorda}.

\begin{lemma}\label{lem:convergencia_serie}{\rm Let $\varphi$ be a polynomial of degree $\geq 2$. Assume that for all $r>1$, $n \in{\mathbb N}$ there exist $C>0$, $q\in{\mathbb N}$ such that the following conditions hold for each $x\in{\mathbb R}$ and $m\in {\mathbb N}$:
	
	\begin{itemize}
	\item[(i)]  $|\varphi_{m}^{(n)}(x)| \leq Cr^m(1+|\varphi_m(x)|)^q$
	\item[(ii)] $|x|\leq (1+|\varphi_m(x)|)^q$.
	\end{itemize}
	
	 Then the series 
	
	$$\sum_m \mu^m f\circ \varphi_m$$
	
	\noindent is convergent in $S({\mathbb R})$ for each $|\mu|<1$. If in addition we assume that (i) happens with $r=1$ then the series is convergent for each $\mu\in{\mathbb C}$.
}	
\end{lemma}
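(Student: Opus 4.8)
The plan is to prove absolute convergence of the series in each continuous seminorm of ${\mathcal S}({\mathbb R})$; since ${\mathcal S}({\mathbb R})$ is a Fr\'echet space, this yields convergence of the series itself. I will work with the seminorms $p_{k,l}(g) = \sup_{x\in{\mathbb R}}(1+|x|)^k|g^{(l)}(x)|$, which generate the topology, and I fix $f\in{\mathcal S}({\mathbb R})$. The heart of the matter is a bound on $p_{k,l}(f\circ\varphi_m)$ that is geometric in $m$ with a ratio I can control through the hypotheses.

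First I would differentiate $f\circ\varphi_m$ by the Fa\`a di Bruno formula, writing $(f\circ\varphi_m)^{(l)}$ as a finite sum (with a number of summands depending only on $l$) of terms of the form $f^{(p)}(\varphi_m(x))\prod_{j=1}^{l}\bigl(\varphi_m^{(j)}(x)\bigr)^{k_j}$, where $p=\sum_j k_j\le\sum_j jk_j=l$. Bounding each factor $\varphi_m^{(j)}$ by hypothesis (i), the weight $|x|^k\le(1+|\varphi_m(x)|)^{qk}$ by hypothesis (ii), and absorbing the finitely many combinatorial constants, I expect to reach
$$
p_{k,l}(f\circ\varphi_m)\ \le\ A\, r^{ml}\,\sup_{y\in{\mathbb R}}(1+|y|)^{q(k+l)}\max_{0\le p\le l}|f^{(p)}(y)|,
$$
where $A$ depends only on $k,l,C,q$, and where I used $\prod_j|\varphi_m^{(j)}|^{k_j}\le C^l r^{ml}(1+|\varphi_m(x)|)^{ql}$ since $p\le l$. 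The supremum on the right is a finite multiple of Schwartz seminorms of $f$, hence a constant $B_f<\infty$ independent of $m$.

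Given $|\mu|<1$ and a seminorm $p_{k,l}$, the exponent $l$ is fixed, so (for $l\ge 1$; the case $l=0$ is immediate as the bound is already constant) I would simply choose $r\in(1,|\mu|^{-1/l})$, a nonempty interval, and take the constants $C,q$ furnished by the hypotheses for this $r$ and for all derivative orders up to $l$. Then $\sum_m|\mu|^m p_{k,l}(f\circ\varphi_m)\le A\,B_f\sum_m(|\mu|\,r^l)^m<\infty$ because $|\mu|r^l<1$. As this holds for every $k,l$, the partial sums are Cauchy in each seminorm and the series converges in ${\mathcal S}({\mathbb R})$.

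For the stronger conclusion when (i) holds with $r=1$, the bounded estimate above does not suffice: with $r=1$ the terms are merely bounded in each seminorm, so the series $\sum_m|\mu|^m p_{k,l}(f\circ\varphi_m)$ would only converge for $|\mu|<1$. The extra decay must come from the growth of the iterates. Here I would invoke that conditions (i) and (ii) with $r=1$ are equivalent to $C_\varphi$ being power bounded (the remark preceding the lemma), so that $\varphi$ has even degree and no fixed points and Lemma \ref{lem:cota-iteradas} applies. Refining the estimate by splitting off a high power $(1+|\varphi_m(x)|)^{-s}$ and using the Schwartz decay of $f$, I expect
$$
p_{k,l}(f\circ\varphi_m)\ \le\ A\,(1+d_m)^{-s}\,\max_{0\le p\le l}\sup_{y\in{\mathbb R}}(1+|y|)^{q(k+l)+s}|f^{(p)}(y)|,\qquad d_m:=\inf_{x\in{\mathbb R}}|\varphi_m(x)|.
$$
By Lemma \ref{lem:cota-iteradas} applied to $\psi=\varphi_N$ one gets $\inf_{t}|\psi_{j+1}(t)|\ge K\bigl(\inf_t|\psi_j(t)|\bigr)^2$, so $d_{Nj}$ grows doubly exponentially in $j$; since the iterates are monotone this forces $d_m$ to grow doubly exponentially in $m$, whence $(1+d_m)^{-s}$ dominates any geometric factor $|\mu|^m$ and $\sum_m|\mu|^m p_{k,l}(f\circ\varphi_m)<\infty$ for every $\mu\in{\mathbb C}$. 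The main obstacle is precisely this last point: recognizing that the bounded estimate is insufficient and that one must feed in the super-exponential growth of the iterates supplied by Lemma \ref{lem:cota-iteradas}, together with the minor bookkeeping of passing from the subsequence $\varphi_{Nj}$ to all $m$ (for instance by grouping the series according to the residue of $m$ modulo $N$ and noting that each $f\circ\varphi_i$ is again in ${\mathcal S}({\mathbb R})$).
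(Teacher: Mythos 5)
Your proposal is correct and follows essentially the same route as the paper: Fa\`a di Bruno together with hypotheses (i)--(ii) and a choice of $r$ per seminorm so that $|\mu|r^l<1$ for the case $|\mu|<1$, and, for $r=1$, power boundedness plus Lemma \ref{lem:cota-iteradas} to turn the doubly exponential growth of $\inf_x|\varphi_m(x)|$ into decay of the terms via the Schwartz estimates of $f$. The only (cosmetic) difference is bookkeeping in the second step: the paper splits the series modulo $N$ and bounds terms by $M|\mu|^m/(1+|\varphi_m(x)|)$, whereas you bound $\inf_x|\varphi_m(x)|$ for all $m$ directly using its monotonicity, an equally valid variant you yourself note.
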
	

\begin{proof}
For each $n,m\in {\mathbb N}_0,$  by Fa\'a de Bruno formula
$$
(f\circ \varphi_m)^{(n)}(x) = \sum_{j=1}^{n} f^{(j)}(\varphi_m(x))B_{n,j}\left(\varphi_m'(x)\ldots \varphi_{m}^{(n-j+1)}(x)\right),
$$ where $B_{n,j}$ are the Bell polynomials. Thus, from (i) and (ii), given $f\in {\mathcal S}({\mathbb R}),$ $n\in {\mathbb N}_0, r > 1$ and a polynomial $P$ we find another polynomial $\tilde{P}$ such that 
\begin{equation}\label{eq:Ptilde}
 \left|P(x) (f\circ \varphi_m)^{(n)}(x)\right|\leq r^m\left|\tilde{P}(\varphi_m(x))\sum_{j=1}^{n} f^{(j)}(\varphi_m(x)) \right|.
\end{equation}
Since $f\in {\mathcal S}({\mathbb R})$ there is $M > 0$ such that (\ref{eq:Ptilde}) can be estimated by $M r^m.$

Therefore, given $\mu\in {\mathbb D}$ we choose $r > 1$ such that $|r\mu| < 1$ and the convergence in ${\mathcal S}({\mathbb R})$ of $\sum \mu^m f\circ \varphi_m$ follows.
\par\medskip
If in addition (i) is satisfied with $r = 1$ then $C_{\varphi}$ is power bounded \cite[Proposition 3.9]{fernandez_galbis_jorda} and we have the estimate (\ref{eq:Ptilde}) with $r = 1.$ By \cite[Theorem 3.1]{fernandez_galbis_jorda} $\varphi$ has even degree and lacks fixed points. 
\par\medskip
First we assume that 
 \begin{itemize}
  \item[(a)] $|\varphi(x)| > x^2$ for every $x\in {\mathbb R}$ \par and
\item[(b)] $\inf\left\{|\varphi(x)|:\ x\in {\mathbb R}\right\} = a > 1.$
 \end{itemize}
This implies that $\left|\varphi_m(x)\right| > a^{2^{m-1}}$ and the series 
$$
\sum_m \frac{\mu^m}{\left|\varphi_m(x)\right|}
$$ converges absolutely and uniformly in ${\mathbb R}$ for every $\mu \in {\mathbb C}.$ Using (\ref{eq:Ptilde}) with $r = 1$ we immediately have, for every polynomial $P$ and $n\in {\mathbb N},$ that 
$$
\begin{array}{*2{>{\displaystyle}l}}
\left|\mu^m P(x) (f\circ \varphi_m)^{(n)}(x)\right|& \leq\left|\frac{\mu^m}{1+|\varphi_m(x)|}(1+|\varphi_m(x)|)\tilde{P}(|\varphi_m(x)|)\sum_{j=1}^{n} f^{(j)}(\varphi_m(x))\right| \\ & \\ & 
\leq M\frac{|\mu|^m}{1+|\varphi_m(x)|}
\end{array}
$$ for some $M > 0.$ Consequently 
$$
\sum_m \mu^m f\circ \varphi_m
$$ converges in ${\mathcal S}({\mathbb R})$ for each $\mu \in {\mathbb C}$ and $f\in{\mathcal S}({\mathbb R}).$
\par\medskip
In the general case, we may find $N\in {\mathbb N}$ such that $\varphi_{N}$ satisfies conditions (a) and (b) (\cite[Lemma 3.10]{fernandez_galbis_jorda}). Hence
$$
\sum_m \mu^m f\circ \varphi_m = \sum_{j=0}^{N-1}\mu^j \left(\sum_m \left(\mu^{N}\right)^m \left(f \circ \varphi_j\right)\circ \varphi_{Nm}\right)
$$ converges in ${\mathcal S}({\mathbb R}).$
\end{proof}

\begin{theorem}\label{th:withoutfixedpoints}{\rm Let $\varphi$ be a polynomial with even degree and without fixed points. Then $\sigma(C_\varphi)=\{0\}.$
}
\end{theorem}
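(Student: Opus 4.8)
The plan is to establish the two inclusions separately: that $0$ genuinely lies in the spectrum, and that every $\lambda\neq 0$ belongs to the resolvent set. The point $0$ is immediate from the structure of $\varphi$. Since $\varphi$ has even degree $\geq 2$, its derivative $\varphi'$ is a polynomial of odd degree and hence has a real zero. Proposition \ref{prop:elementary}(c) then gives $0\in\sigma(C_\varphi)$.

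For the nonzero part I would exploit power boundedness. Under the stated hypotheses (even degree, no fixed points), the cited theorem \cite[3.11]{fernandez_galbis_jorda} guarantees that $C_\varphi$ is power bounded, and by the remark preceding Lemma \ref{lem:convergencia_serie} this is exactly the assertion that conditions (i) and (ii) of that lemma hold with $r=1$. Lemma \ref{lem:convergencia_serie} then yields that the Neumann-type series
$$
R_\mu f := \sum_{m\geq 0}\mu^m (f\circ\varphi_m)=\sum_{m\geq 0}\mu^m C_\varphi^m f
$$
converges in ${\mathcal S}({\mathbb R})$ for \emph{every} $\mu\in{\mathbb C}$ and every $f\in{\mathcal S}({\mathbb R})$ --- crucially for all $\mu$, not merely for $|\mu|<1$.

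It then remains to identify $R_\mu$ as an inverse. Each partial sum $S_M=\sum_{m=0}^{M}\mu^m C_\varphi^m$ is a continuous operator, and since ${\mathcal S}({\mathbb R})$ is barrelled the strong limit $R_\mu=\lim_M S_M$ is again continuous on ${\mathcal S}({\mathbb R})$. The telescoping identities
$$
(I-\mu C_\varphi)S_M = S_M(I-\mu C_\varphi)= I-\mu^{M+1}C_\varphi^{M+1},
$$
together with the fact that convergence of the series forces $\mu^{M+1}C_\varphi^{M+1}f\to 0$ for every $f$, show upon letting $M\to\infty$ that $R_\mu$ is a two-sided continuous inverse of $I-\mu C_\varphi$. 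Hence $I-\mu C_\varphi$ is invertible for every $\mu\in{\mathbb C}$. Writing $\lambda=\mu^{-1}$ and using $C_\varphi-\lambda I=-\lambda(I-\mu C_\varphi)$ for $\mu\neq 0$, we conclude that $\lambda\notin\sigma(C_\varphi)$ for every $\lambda\neq 0$.

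I do not expect a genuine obstacle here, since the analytic heart is already carried by Lemma \ref{lem:convergencia_serie}: the decisive gain is the convergence of the series for all $\mu$, available precisely because $r=1$ works under power boundedness. The only points needing (routine) care are the appeal to barrelledness to upgrade pointwise convergence of $R_\mu$ to continuity and the verification of the telescoping identity. Once these are in place, combining the nonzero resolvent set with $0\in\sigma(C_\varphi)$ yields $\sigma(C_\varphi)=\{0\}$.
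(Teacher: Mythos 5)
Your proposal is correct, but its architecture differs from the paper's in a way worth recording. Both arguments run on the same analytic engine: since $\varphi$ has even degree and no fixed points, $C_\varphi$ is power bounded by \cite[Theorem 3.11]{fernandez_galbis_jorda}, which by the remark preceding Lemma \ref{lem:convergencia_serie} is exactly conditions (i)--(ii) with $r=1$, and hence the series $\sum_m \mu^m f\circ\varphi_m$ converges in ${\mathcal S}({\mathbb R})$ for \emph{every} $\mu\in{\mathbb C}$. The difference is in how this is converted into invertibility. The paper first passes to an iterate $\psi=\varphi_N$ (Lemma \ref{lem:cota-iteradas}) in order to have the doubly exponential lower bound $|\psi_m(x)|>a^{2^{m-1}}$ with $a>1$; it uses that bound to prove injectivity of $C_\psi-\lambda I$, proves surjectivity by exhibiting the series solution $f=-\sum_k \lambda^{-k-1}\, g\circ\psi_k$, gets continuity of the inverse from the open mapping theorem, and finally returns from $\psi$ to $\varphi$ via the spectral mapping proposition $\sigma(C_{\varphi_N})=\{\lambda^N:\ \lambda\in\sigma(C_\varphi)\}$. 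You bypass all of this superstructure: by checking that $R_\mu=\sum_m\mu^m C_\varphi^m$ is a \emph{two-sided} inverse of $I-\mu C_\varphi$ (telescoping, plus the fact that the terms of a convergent series tend to $0$), injectivity comes for free, and Banach--Steinhaus on the barrelled space ${\mathcal S}({\mathbb R})$ replaces the open mapping theorem; consequently you never need Lemma \ref{lem:cota-iteradas} nor the spectral mapping proposition at the top level (the passage to an iterate is of course still present, but hidden inside the proof of Lemma \ref{lem:convergencia_serie}, which you correctly use as a black box). The treatments of $0\in\sigma(C_\varphi)$ also differ harmlessly: you invoke Proposition \ref{prop:elementary}(c) since $\varphi'$ has odd degree and therefore a real zero, while the paper notes that $C_\varphi$ is not injective because the range of $\varphi$ is a proper interval; both are immediate. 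Net effect: your route is a cleaner deduction from the same key lemma, at the cost of explicitly invoking barrelledness, which the paper's open-mapping argument uses only implicitly through the Fr\'echet structure.
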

\begin{proof}
From Lemma \ref{lem:cota-iteradas} we find $N\in {\mathbb N}$ such that if $\psi=\varphi_N,$ $$\min\{|\psi(x)|:x \in {\mathbb R}\}=a>1$$ and $$
\left|\psi_{m+1}(x)\right| \geq  \left(\psi_{m}(x)\right)^2\ \ \forall m,\ \ \ \forall x\in {\mathbb R}.
$$
In particular, this gives \begin{equation}\label{eq:lowerbound}\left|\psi_{m}(x)\right|>a^{2^{m-1}},\end{equation} for all $x$ and every $m.$
\par\medskip
Since the range of $\varphi$ is a proper (unbounded) interval then $C_\varphi$ is not injective and $0\in \sigma(C_\varphi).$ To finish the proof it suffices to show that $\sigma(C_\psi)\subset \{0\}.$

To this end, we fix $\lambda\in {\mathbb C}, \lambda\neq 0,$ and check that $C_\psi - \lambda I$ is a bijection, hence a topological isomorphism by the open mapping theorem.
\par
(i) Injectivity. Let us assume $C_\psi f = \lambda f$ for some $f\in {\mathcal S}({\mathbb R}).$ Then, for every $m\in {\mathbb N},$
$$
f(x) = \lambda^{-m}f\left(\psi_m(x)\right) = \frac{\psi_m(x)f\left(\psi_m(x)\right)}{\lambda^m \psi_m(x).}
$$ Since
$$
\left|\lambda^m \psi_m(x)\right| \geq |\lambda|^m\cdot a^{2^{m-1}}\to \infty
$$ then $f(x) = 0$ for all $x\in {\mathbb R}.$
\par
(ii) Surjectivity. From \cite[Theorem 3.11 and Proposition 3.9]{fernandez_galbis_jorda} $C_\varphi$ is power bounded and the hypothesis in Lemma \ref{lem:convergencia_serie} are satisfied with $r = 1.$ Then, for every $g\in {\mathcal S}({\mathbb R})$ and $\lambda\neq 0$ the series 
\begin{equation}\label{eq:f}
f = -\sum_{k=0}^\infty \frac{1}{\lambda^{k+1}}g\circ\psi_k
\end{equation} converges in ${\mathcal S}({\mathbb R})$ and clearly $C_\psi f - \lambda f = g.$
\end{proof}

According to Proposition \ref{prop:elementary} the behavior of composition operators with polynomials having fixed points is different. In order to obtain more information we first we need an auxiliary result.
 
\begin{lemma}\label{lem:fast}{\rm Let $\varphi$ be a polynomial with degree greater than one. Then, there is $M > 0$ such that for each $|x|>M,$  $$\lim_n \frac{1}{\lambda^n}f(\varphi_n(x))=0,$$
 for $0<|\lambda|\leq 1$ and every $f\in {\mathcal S}({\mathbb R}).$
 }
 \end{lemma}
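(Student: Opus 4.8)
The plan is to exploit the fact that a polynomial of degree $d\geq 2$ forces its iterates to escape to infinity \emph{doubly exponentially} once the starting point is large, and that this super-exponential growth, combined with the decay of $f$ at infinity, dominates the merely exponential factor $\lambda^{-n}$.

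First I would fix a constant $c$ with $1<c<d$ (for instance $c=3/2$, which is admissible since $d\geq 2$). Writing $a_d\neq 0$ for the leading coefficient of $\varphi$, the quotient $|\varphi(y)|/|y|^d$ tends to $|a_d|$ as $|y|\to\infty$, so $|\varphi(y)|/|y|^c=\bigl(|\varphi(y)|/|y|^d\bigr)|y|^{d-c}\to\infty$. Hence one can choose $M>1$ large enough that
$$|\varphi(y)|\geq |y|^c\qquad\text{whenever }|y|\geq M.$$
Since $M>1$ and $c>1$, this estimate keeps the orbit inside the escape region: if $|x|>M$ then $|\varphi(x)|\geq |x|^c>M^c>M$, and by induction $|\varphi_n(x)|>M$ for all $n$, with the lower bound
$$|\varphi_n(x)|\geq |x|^{c^n}>M^{c^n}.$$

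Next I would use the decay of $f$. For every $f\in{\mathcal S}({\mathbb R})$ there is $C>0$ with $|f(y)|\leq C/(1+|y|)$ for all $y$, whence for $|x|>M$
$$\left|\frac{1}{\lambda^n}f(\varphi_n(x))\right|\leq \frac{C}{|\lambda|^n\bigl(1+|\varphi_n(x)|\bigr)}\leq \frac{C}{|\lambda|^n\,M^{c^n}}.$$
Taking logarithms, the right-hand side has logarithm $\log C-n\log|\lambda|-c^n\log M$; since $0<|\lambda|\leq 1$ the term $-n\log|\lambda|$ grows at most linearly in $n$, whereas $\log M>0$ and $c>1$ make $-c^n\log M$ tend to $-\infty$ like $c^n$. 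Thus the bound tends to $0$, proving the claim for each fixed $x$ with $|x|>M$ and each $\lambda$ with $0<|\lambda|\leq 1$.

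The only delicate point is the choice of $M$: I must simultaneously guarantee that $|\varphi|$ dominates a fixed power $|y|^c$ with $c>1$ on the escape region \emph{and} that $\{|y|>M\}$ is forward invariant, so that the doubly exponential lower bound $|\varphi_n(x)|\geq |x|^{c^n}$ propagates through all iterates. Once this super-exponential growth is secured, the comparison with the single exponential $|\lambda|^{-n}$ is immediate and requires no further work. (Note that mere boundedness of $f$ would not suffice, since $|\lambda|^{-n}\to\infty$ when $|\lambda|<1$; the decay $C/(1+|y|)$ is exactly what converts the doubly exponential growth of $|\varphi_n(x)|$ into a doubly exponentially small bound.)
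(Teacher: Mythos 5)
Your proof is correct and follows essentially the same route as the paper: choose $M>1$ so that $|\varphi(y)|\geq |y|^{c}$ with some fixed $c>1$ on $\{|y|>M\}$, deduce the doubly exponential lower bound $|\varphi_n(x)|\geq M^{c^n}$, and use the Schwartz decay of $f$ (the paper writes this as boundedness of $y\mapsto yf(y)$, you as $|f(y)|\leq C/(1+|y|)$, which is the same thing) to beat the factor $|\lambda|^{-n}$. The only difference is cosmetic: you spell out the forward invariance of the escape region and the logarithmic comparison, which the paper leaves implicit.
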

 \begin{proof} 
We fix $2 > p > 1$ and take $M > 1$ such that $\left|\varphi(x)\right| > |x|^p$ whenever $|x| > M.$ Then $|x| > M$ implies 
$$
\left|\varphi_n(x)\right| > M^{p^n}\ \ \forall n\in {\mathbb N}.
$$ Finally
$$
\lim_n \left|\frac{1}{\lambda^n}f(\varphi_n(x))\right| \leq \lim_n \left|\frac{\varphi_n(x) f(\varphi_n(x))}{\lambda^n M^{p^n}}\right| = 0.
$$
\end{proof}

\begin{lemma}\label{lem:negativederivative}{\rm Let $\varphi$ be a polynomial with odd degree greater than one such that $\displaystyle\lim_{x\to +\infty}\varphi(x)= -\infty.$ Let $a$ be a fixed point of $\varphi.$ If $a$ is the largest fixed point of $\varphi_2$ then $\varphi'(a)\leq -1.$
}
\end{lemma}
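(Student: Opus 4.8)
The plan is to study the two auxiliary polynomials $h(x) = \varphi(x) - x$ and $G(x) = \varphi_2(x) - x$, whose real zeros are exactly the fixed points of $\varphi$ and of $\varphi_2$ respectively, and to read off the size and sign of $\varphi'(a)$ from their local behaviour at $a$. Since $\varphi(a)=a$ we have $G(a)=h(a)=0$, so $a$ is a zero of both.

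First I would pin down the sign of $G$ to the right of $a$. As $\varphi$ has odd degree with $\lim_{x\to+\infty}\varphi(x)=-\infty$, its leading coefficient is negative, and hence $\varphi_2$ has even degree with positive leading coefficient, giving $G(x)\to+\infty$ as $x\to+\infty$. Because $a$ is the largest zero of $G$, the function $G$ has no zero in $(a,\infty)$ and is positive for large $x$, so $G(x)>0$ for every $x>a$. Combined with $G(a)=0$, the one-sided difference quotient at $a$ yields $G'(a)\geq 0$; and since $\varphi(a)=a$ the chain rule gives $G'(a)=\varphi'(\varphi(a))\varphi'(a)-1=(\varphi'(a))^2-1$. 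Thus $(\varphi'(a))^2\geq 1$, i.e. $|\varphi'(a)|\geq 1$, and it remains only to settle the sign, that is, to exclude $\varphi'(a)>0$.

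Next I would establish that $\varphi(x)<x$ for all $x>a$. Otherwise $h(x_0)\geq 0$ for some $x_0>a$; since $h(x)=\varphi(x)-x\to-\infty$ as $x\to+\infty$, the intermediate value theorem produces a zero of $h$ in $[x_0,\infty)$, i.e. a fixed point of $\varphi$, and therefore of $\varphi_2$, strictly greater than $a$, contradicting the maximality of $a$. Hence $\varphi(x)<x$ throughout $(a,\infty)$.

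The crux, and the step I expect to be the main obstacle, is ruling out $\varphi'(a)>0$; the genuinely delicate case is the borderline $\varphi'(a)=1$, where the first-derivative test applied to $h$ at $a$ is inconclusive and one must genuinely use that $a$ is the largest fixed point of the \emph{second} iterate. Here I would feed the inequality of the previous paragraph back into itself through $\varphi_2$: if $\varphi'(a)>0$ then $\varphi$ is increasing near $a$, so for $x$ slightly larger than $a$ one has $\varphi(x)>\varphi(a)=a$, while the previous step gives $\varphi(x)<x$; thus $y:=\varphi(x)$ lies in $(a,x)$. Applying $\varphi(t)<t$ once more at the point $t=y>a$ gives $\varphi(y)<y<x$, whence $G(x)=\varphi(\varphi(x))-x=\varphi(y)-x<0$, contradicting $G(x)>0$ for $x>a$. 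Therefore $\varphi'(a)\leq 0$, and since $|\varphi'(a)|\geq 1$ already rules out $\varphi'(a)=0$, we conclude $\varphi'(a)\leq -1$.
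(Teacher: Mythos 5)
Your proof is correct, and its skeleton coincides with the paper's: both derive $(\varphi'(a))^2 = \varphi_2'(a) \geq 1$ from the fact that $\varphi_2(x) > x$ on $(a,+\infty)$, both establish $\varphi(x) < x$ on $(a,+\infty)$ from the maximality of $a$, and both finish by excluding $\varphi'(a) > 0$. Where you genuinely differ is in that exclusion step. The paper argues dynamically: if $\varphi$ were increasing on $[a, a+\delta]$, that interval would be invariant under $\varphi$, hence under $\varphi_2$, contradicting the fact that for every $x > a$ the orbit $(\varphi_{2n}(x))_n$ is increasing (since $\varphi_2 > \mathrm{id}$ there) and unbounded (a bounded increasing orbit would converge to a fixed point of $\varphi_2$ larger than $a$). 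Your argument is purely local and shorter: for $x$ slightly to the right of $a$, monotonicity gives $y = \varphi(x) \in (a,x)$, and applying $\varphi < \mathrm{id}$ once more at the point $y > a$ yields $\varphi_2(x) = \varphi(y) < y < x$, directly contradicting $\varphi_2(x) > x$. This replaces the paper's orbit argument by a two-line order computation that uses nothing beyond the two inequalities already in hand; it is arguably the cleaner route. One small slip to fix: $\varphi_2$ does not have even degree --- its degree is $d^2$ with $d$ odd, hence odd --- but the point you actually need, namely that the leading coefficient $c^{d+1}$ of $\varphi_2$ is positive so that $G(x) = \varphi_2(x) - x \to +\infty$ as $x \to +\infty$, is correct (alternatively, compose the limits $\varphi(x) \to -\infty$ as $x \to +\infty$ and $\varphi(y) \to +\infty$ as $y \to -\infty$), so nothing downstream is affected.
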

\begin{proof}
 We first observe that $\displaystyle\lim_{x\to +\infty}\varphi_2(x)= +\infty.$ From $\varphi_2(x) > x$ for every $x > a$ we get
$$
\left(\varphi'(a)\right)^2 = \varphi_2'(a) = \lim_{x\to a}\frac{\varphi_2(x)-a}{x-a} \geq 1.
$$ Since $a$ is also the largest fixed point of $\varphi$ then $\varphi(x) < x$ for all $x > a,$ from where it follows
$$
\varphi'(a) = \lim_{x\to a}\frac{\varphi(x)-a}{x-a}\leq 1.
$$ Consequently $\varphi'(a)\leq -1$ or $\varphi'(a) = 1.$ Finally we check that $\varphi'(a)\leq 0.$ Otherwise there is $\delta > 0$ such that $\varphi$ is strictly increasing on $[a, a+\delta].$ Since $a < \varphi(a+\delta) \leq a+\delta$ then $\varphi\left([a,a+\delta]\right) \subset [a,a+\delta].$ This is a contradiction. In fact, for every $x > a$ the sequence $\left(\varphi_{2n}(x)\right)_n$ is increasing and unbounded.
\end{proof}

\begin{theorem}\label{th:polynomial_fixed_point} {\rm Let $\varphi$ be a polynomial with degree greater than one and having fixed points. Then,
$$
\overline{{\mathbb D}}\setminus \{0\}\subset \sigma(C_\varphi).$$
}
\end{theorem}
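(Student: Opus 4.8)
The plan is to show that for every $\lambda$ with $0<|\lambda|\le 1$ the operator $C_\varphi-\lambda I$ fails to be surjective; since the case $\lambda=1$ is already covered by Proposition \ref{prop:elementary}(a), I will concentrate on $\lambda\neq 1$. The whole argument rests on a \emph{forced-solution} identity. If $f\in\mathcal S(\mathbb R)$ satisfies $f\circ\varphi-\lambda f=g$, then iterating gives $f(x)=\lambda^{-n}f(\varphi_n(x))-\sum_{j=0}^{n-1}\lambda^{-1-j}g(\varphi_j(x))$ for every $n$; whenever the forward orbit of $x$ escapes to infinity, Lemma \ref{lem:fast} forces the remainder $\lambda^{-n}f(\varphi_n(x))$ to tend to $0$, so that necessarily $f(x)=-\sum_{j\ge 0}\lambda^{-1-j}\,g(\varphi_j(x))$. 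Thus on the escaping set the solution is uniquely determined by $g$, and the strategy is to choose $g$ so that this forced $f$ cannot be the restriction of an element of $\mathcal S(\mathbb R)$.

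Next I would localize near a suitable fixed point. Assume first that $\varphi$ has even or odd degree with \emph{positive} leading coefficient; then the largest fixed point $b$ satisfies $\varphi(x)>x$ for all $x>b$, and $\varphi'(b)\ge 1$, so $\varphi$ is increasing just above $b$. I would pick $P>b$ very close to $b$, set $Q=\varphi(P)$, and take $g\in C_c^\infty(\mathbb R)\subset\mathcal S(\mathbb R)$ supported in $(P,Q)$ with $g(z)>0$ for a chosen $z\in(P,Q)$. Pulling $z$ back through the increasing branch of $\varphi$ on $(b,\infty)$ produces a decreasing sequence $z_k\to b$ with $\varphi_k(z_k)=z$, whose orbit lies below $P$ for the first $k$ steps and exceeds $Q$ afterwards. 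Consequently the orbit of $z_k$ meets $\mathrm{supp}\,g$ exactly once, at step $k$, and the forced-solution identity collapses to the single term $f(z_k)=-\lambda^{-1-k}g(z)$.

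This single-term formula yields the contradiction. If $0<|\lambda|<1$ then $|f(z_k)|=|\lambda|^{-1-k}g(z)\to\infty$, which is impossible for a bounded function; if $|\lambda|=1$ (and $\lambda\neq 1$) then $|f(z_k)|=g(z)$ is a fixed positive number while the phase $\lambda^{-1-k}$ has at least two accumulation points, so $\bigl(f(z_k)\bigr)_k$ does not converge even though $z_k\to b$, contradicting the continuity of $f$. In either case $C_\varphi-\lambda I$ is not surjective, whence $\overline{\mathbb D}\setminus\{0\}\subset\sigma(C_\varphi)$. The case of even degree with negative leading coefficient is symmetric: one repeats the construction at the smallest fixed point, where orbits escape to $-\infty$ (equivalently, one conjugates by $x\mapsto -x$, which leaves the spectrum unchanged).

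The step I expect to be the main obstacle is the remaining case of \emph{odd degree with negative leading coefficient}, where $\varphi(x)-x\to-\infty$ at $+\infty$, so no fixed point has forward orbits escaping monotonically on one side. Here I would pass to $\psi=\varphi_2$, which has even degree and positive leading coefficient and whose fixed points are the $2$-periodic points of $\varphi$; Lemma \ref{lem:negativederivative} provides a fixed point $a$ with $\varphi'(a)\le -1$. When $\varphi'(a)<-1$ the backward orbit of a nearby $z$ under $\varphi$ oscillates about $a$ and still converges to $a$, so the single-term construction can be run \emph{directly for} $\varphi$, avoiding the $\pm\lambda$ ambiguity that a mere appeal to the spectral mapping $\sigma(C_{\varphi_2})=\{\mu^2:\mu\in\sigma(C_\varphi)\}$ would introduce. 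The delicate point is the borderline value $\varphi'(a)=-1$, where one must instead use the attracting petal of the parabolic fixed point $a$ of $\varphi_2$ to obtain a backward orbit $z_k\to a$ (now converging only polynomially fast) and verify that the first-entry step into $\mathrm{supp}\,g$ is still exactly $k$. Controlling this neutral case, together with the bookkeeping of the exact first-entry time when $\varphi$ is not globally monotone, is where the real work lies.
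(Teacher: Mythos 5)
Your treatment of the case $\lim_{x\to+\infty}\varphi(x)=+\infty$, and of even degree with negative leading coefficient, is correct and essentially identical to the paper's: the forced-solution series obtained from Lemma \ref{lem:fast}, a bump function supported in a fundamental interval $(P,\varphi(P))$ just above the largest fixed point, the exactly-one-entry bookkeeping along the backward orbit, and a boundedness/continuity contradiction (the paper phrases the last step as $f(a)=0$ versus $|f(y_m)|=|\lambda|^{-m-1}$, but your version with non-convergent phases works equally well).

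The genuine gap is the odd-degree, negative-leading-coefficient case, and it is not merely ``the delicate part'': your anchor point need not exist. Lemma \ref{lem:negativederivative} is conditional --- it says that \emph{if} a fixed point of $\varphi$ happens to be the largest fixed point of $\varphi_2$, \emph{then} $\varphi'(a)\le -1$; it does not produce such a fixed point. For $\varphi(x)=-x^3$ the only real fixed point is $0$, with $\varphi'(0)=0$, while the largest fixed point of $\varphi_2(x)=x^9$ is $1$, a genuine $2$-periodic point; so there is no fixed point of $\varphi$ with $\varphi'(a)\le -1$ and your construction has nowhere to start. (Also, $\varphi_2$ has odd degree $d^2$, not even degree; what matters is only that its leading coefficient is positive.) Even when a fixed point with $\varphi'(a)<-1$ does exist, running the single-term argument ``directly for $\varphi$'' requires the forward orbits of points near $a$ to escape to infinity (to invoke Lemma \ref{lem:fast} for the forced-solution identity) and to meet $\mathrm{supp}\,g$ exactly once; an oscillating repelling fixed point guarantees neither, since once the orbit leaves the small neighborhood of $a$ it is uncontrolled and may return. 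The paper's resolution is to localize at the largest fixed point $a$ of $\varphi_2$, whether or not $\varphi(a)=a$: one chooses $\delta$ so that $\varphi$ maps $(a,a+\delta]$ strictly below $a$ (trivial when $\varphi(a)<a$; via Lemma \ref{lem:negativederivative} and monotonicity when $\varphi(a)=a$), keeps the functional equation for $\varphi$ itself (so there is no $\pm\lambda$ ambiguity), but takes $g$ supported in $(x_1,x_0)\subset(a,a+\delta)$ and runs the backward orbit under $\varphi_2$. Then the even iterates of $y_m$ increase monotonically to $+\infty$ and enter $\mathrm{supp}\,g$ exactly once, while every odd iterate lies below $a$ (otherwise the next even iterate would fall below $a$), so $f(y_m)=-\lambda^{-2m-1}$ and the same contradiction follows. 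In particular, no parabolic/petal analysis of the borderline case $\varphi'(a)=-1$ is needed.
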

\begin{proof} (a) First we consider the case that $\displaystyle \lim_{x\to +\infty}\varphi(x) = +\infty.$ We fix $\lambda \in \overline{{\mathbb D}}\setminus \{0\}$ and assume that $\lambda \notin \sigma(C_\varphi),$ hence $\lambda \neq 1.$ Let $a\in {\mathbb R}$ be given with the property that $\varphi(a)=a$ and $\varphi(x)>x$ for $x>a.$ Since $\varphi'(a)\geq 1$ then $\varphi$ is strictly increasing in some interval $[a, a+\delta].$ Let $\psi:[a, \varphi(a+\delta)]\to [a, a+\delta]$ be the inverse of $\varphi:[a, a+\delta] \to [a, \varphi(a+\delta)].$ Since $\varphi(a+\delta) > a+\delta$ then $\psi_k,$ the $k$-th iterate of $\psi,$ is well defined for every $k\in {\mathbb N}.$
\par\medskip
We fix $x_0\in (a, a+\delta)$ and define $x_{k} = \psi_k(x_0).$ Then $\left(x_k\right)_k$ is a decreasing sequence converging to $a.$ Let $J_0$ be a closed interval contained in $(x_1, x_0)$ and take a smooth function $g$ whose support is contained in $(x_1, x_0)$ and satisfying $g(x) = 1$ for all $x\in J_0.$ Then, there is a unique $f\in {\mathcal S}({\mathbb R})$ such that
\begin{equation}\label{eq:pr1}
f\left(\varphi(x)\right) - \lambda f(x) = g(x),\ \ x\in {\mathbb R}.\end{equation} After iterating this identity we obtain
\begin{equation}\label{eq:pr2}
 f\left(\varphi_n(x)\right) = \lambda^{n} f(x) + \sum_{k=0}^{n-1}\lambda^{n-1-k}g\left(\varphi_k(x)\right).
\end{equation} Let $M > 0$ be as in Lemma \ref{lem:fast}. For each $x > a$ the sequence $\left(\varphi_n(x)\right)_n$ diverges to infinity, so there is $m\in {\mathbb N}$ with $\varphi_n(x) > M$ and it easily follows that $$\lim_n \frac{1}{\lambda^n}f(\varphi_n(x))=0.$$ We conclude
\begin{equation}\label{eq:pr3}
f(x)=-\sum_{k=0}^\infty \frac{1}{\lambda^{k+1}}g(\varphi_k(x))\end{equation} for all $x>a.$ Finally, we fix $y_0\in J_0$ and  define $y_k = \psi_k(y_0)\in (x_{k+1}, x_k).$ We have $\varphi_m(y_m) = y_0\in J_0,$ while $\varphi_k(y_m) = \varphi_{k-m}(y_0) > x_0$ for $k > m$ and $\varphi_k(y_m) = \psi_{m-k}(y_0)<x_1$ for $k < m.$ Consequently
$$
f(y_m) = -\lambda^{-m-1}\ \ \mbox{while}\ \ f(a) = 0.
$$ The last identity follows from (\ref{eq:pr1}) using $\lambda \neq 1.$ Since
$$
\lim_m\left|f(y_m)\right| \neq \left|f(a)\right|
$$ we get a contradiction.
\par\medskip
(b) To deal with the case that $\displaystyle \lim_{x\to +\infty}\varphi(x) = -\infty$ we have to consider two possibilities, depending on whether the degree of the polynomial is even or odd.
\par
(i) First case: the degree of $\varphi$ is even. Since $\varphi$ is linearly equivalent to $\psi(x) = -\varphi(-x)$ and $\displaystyle \lim_{x\to +\infty}\psi(x) = +\infty$ then
$$
\overline{{\mathbb D}}\setminus \{0\}\subset \sigma(C_\psi) = \sigma(C_\varphi)
$$ and we are done.
\par
(ii) Second case: the degree of $\varphi$ is odd. Then $\displaystyle\lim_{x\to +\infty}\varphi_2(x) = +\infty.$ As above, $\varphi_2$ is strictly increasing in some interval $[a, a+\delta],$ where $a$ is the greatest fixed point of $\varphi_2.$ Moreover, we can take $\delta$ small enough so that $\varphi(x) < a$ for every $x\in (a, a+\delta].$ This is obvious in the case that $\varphi(a) < a.$ Otherwise, $\varphi(a) = a$ and $\varphi'(a)<0$ (Lemma \ref{lem:negativederivative}) and we can take $\delta$ so that $\varphi$ is decreasing on $[a, a+\delta],$ hence $\varphi(x) < \varphi(a) = a$ for every $x\in (a, a+\delta].$ Now, we denote by $\psi$ the inverse of $\varphi_2:[a, a+\delta]\to [a, \varphi_2(a+\delta)].$

Proceeding as in (a), we fix $x_0\in (a, a+\delta)$ and define $x_{k} = \psi_k(x_0).$ Let $J_0$ be a closed interval contained in $(x_1, x_0)$ and take a compactly supported smooth function $g$ whose support is contained in $(x_1, x_0)$ and satisfying $g(x) = 1$ for all $x\in J_0.$ As in (a), there is $f\in {\mathcal S}({\mathbb R})$ such that equation (\ref{eq:pr3}) holds for $x > a.$
 Finally, we fix $y_0\in J_0$ and  define $y_k = \psi_k(y_0)\in (x_{k+1}, x_k).$ We have $\varphi_{2m}(y_m) = y_0\in J_0,$ $\varphi_{2k}(y_m) > x_0$ for $k > m,$ $\varphi_{2k}(y_m) < x_1$ for $k < m.$ Moreover $\varphi_{2k+1}(y_m)\notin (a, a+\delta]$ since otherwise $\varphi_{2k+2}(y_m) < a,$ which is a contradiction. Consequently
$$
f(y_m) = -\lambda^{-2m-1}\ \ \mbox{while}\ \ f(a) = 0.
$$ The same argument as in case (a) gives a contradiction.
\end{proof}

\begin{corollary}{\rm Let $\varphi$ be a polynomial of degree greater than one. Then $C_\varphi$ is mean ergodic if and only if $\sigma(C_\varphi) = \{0\}.$
 }
\end{corollary}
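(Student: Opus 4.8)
The plan is to assemble the corollary directly from three results already in hand: the cited equivalence from \cite[Theorem 3.11]{fernandez_galbis_jorda} (for degree $\geq 2$, mean ergodicity is equivalent to $\varphi$ having even degree and no fixed points), together with Theorem \ref{th:withoutfixedpoints} and Theorem \ref{th:polynomial_fixed_point}. No genuinely new argument is needed; the work is in chaining the implications and supplying the one elementary fact about parity of the degree.

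For the forward implication I would argue as follows. Suppose $C_\varphi$ is mean ergodic. By the cited equivalence the degree of $\varphi$ is even and $\varphi$ has no fixed points. Theorem \ref{th:withoutfixedpoints} then applies verbatim and gives $\sigma(C_\varphi) = \{0\}$, which is exactly the desired conclusion.

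For the converse, assume $\sigma(C_\varphi) = \{0\}$; by the cited equivalence it suffices to show that $\varphi$ has even degree and no fixed points. First I would rule out fixed points: if $\varphi$ had a fixed point, then Theorem \ref{th:polynomial_fixed_point} would force $\overline{{\mathbb D}}\setminus\{0\}\subset \sigma(C_\varphi)$, contradicting $\sigma(C_\varphi)=\{0\}$. Hence $\varphi$ has no fixed point. Next I would rule out odd degree: if the degree of $\varphi$ were odd and greater than one, then $\varphi(x)-x$ would be a polynomial of odd degree, so it would change sign as $x\to\pm\infty$ and would have a real zero by the intermediate value theorem, producing a fixed point of $\varphi$ — contrary to what we just established. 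Therefore the degree is even and $\varphi$ has no fixed points, and the cited equivalence yields that $C_\varphi$ is mean ergodic.

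I do not anticipate any real obstacle here, since both nontrivial inputs (Theorems \ref{th:withoutfixedpoints} and \ref{th:polynomial_fixed_point}) are already proved and the cited characterization packages the dynamical side. The only point requiring a line of its own is the elementary parity observation that a polynomial of odd degree greater than one necessarily has a fixed point; this is what lets the absence of fixed points upgrade to the even-degree hypothesis demanded by \cite[Theorem 3.11]{fernandez_galbis_jorda}.
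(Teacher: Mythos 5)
Your proposal is correct and follows essentially the same route as the paper, whose proof is exactly the one-line instruction to combine \cite[Theorem 3.11]{fernandez_galbis_jorda} with Theorems \ref{th:withoutfixedpoints} and \ref{th:polynomial_fixed_point}; you have merely spelled out the chaining of implications, including the elementary observation (left implicit in the paper) that a fixed-point-free polynomial of degree greater than one must have even degree.
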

\begin{proof}
 Apply \cite[Theorem 3.11]{fernandez_galbis_jorda} and Theorems \ref{th:withoutfixedpoints} and \ref{th:polynomial_fixed_point}.
\end{proof}

\begin{theorem}\label{th:simplefixedpoint}{\rm Let $\varphi$ be a polynomial of degree greater than one and having a fixed point $a$ such that $\varphi'(a) > 1$ and $\varphi^{(n)}(a)\geq 0$ for all $n\geq 2.$ Then
$$
{\mathbb C}\setminus \{0\} \subset \sigma\left(C_\varphi\right).
$$
 }
\end{theorem}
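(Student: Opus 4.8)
The plan is to combine Theorem \ref{th:polynomial_fixed_point} with a direct analysis of the remaining range $|\lambda|>1$. Since $\varphi$ has the fixed point $a$, Theorem \ref{th:polynomial_fixed_point} already gives $\overline{{\mathbb D}}\setminus\{0\}\subset\sigma(C_\varphi)$, so it suffices to prove that every $\lambda$ with $|\lambda|>1$ belongs to $\sigma(C_\varphi)$. First I would record the consequences of the hypotheses. Writing $c=\varphi'(a)>1$ and $d=\deg\varphi$, the assumption $\varphi^{(n)}(a)\geq 0$ for $n\geq2$ forces the leading coefficient $\varphi^{(d)}(a)/d!$ to be positive, so $\lim_{x\to+\infty}\varphi(x)=+\infty$; moreover $\varphi'(x)=\sum_{n\geq1}\frac{\varphi^{(n)}(a)}{(n-1)!}(x-a)^{n-1}\geq c$ for all $x\geq a$. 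Hence $\varphi$ is strictly increasing on $[a,\infty)$ with $\varphi(x)>x$ there, $a$ is its largest fixed point, and for every $x>a$ the orbit $(\varphi_n(x))_n$ increases to $+\infty$ (a bounded increasing orbit would converge to a fixed point above $a$), while $\varphi_n'(x)=\prod_{j=0}^{n-1}\varphi'(\varphi_j(x))\geq c^n$.

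Fix $\lambda$ with $|\lambda|>1$ and argue by contradiction: assume $C_\varphi-\lambda I$ is a topological isomorphism. I would choose a nonzero $g\in{\mathcal S}({\mathbb R})$, smooth with support in a short interval $(x_1,x_0)\subset(a,\infty)$ chosen close enough to $a$, and let $f\in{\mathcal S}({\mathbb R})$ be the unique solution of $(C_\varphi-\lambda I)f=g$, i.e. $f(\varphi(x))-\lambda f(x)=g(x)$. Iterating gives $f(x)=\lambda^{-n}f(\varphi_n(x))-\sum_{k=0}^{n-1}\lambda^{-k-1}g(\varphi_k(x))$; since $f$ is bounded, $|\lambda|>1$ and $\varphi_n(x)\to+\infty$ for $x>a$, letting $n\to\infty$ yields the closed form
$$f(x)=-\sum_{k=0}^{\infty}\lambda^{-k-1}\,g(\varphi_k(x)),\qquad x>a.$$
The entire argument then reduces to showing that this forced expression cannot extend to a $C^\infty$ function at $a$, contradicting $f\in{\mathcal S}({\mathbb R})$.

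To make the last step clean I would linearize $\varphi$ near the repelling fixed point $a$ by a Koenigs coordinate: there is a real-analytic diffeomorphism $\sigma$ of a neighborhood of $a$ with $\sigma(a)=0$, $\sigma'(a)=1$ and $\sigma(\varphi(x))=c\,\sigma(x)$ (apply Koenigs' theorem to the attracting inverse branch of $\varphi$ at $a$, whose multiplier is $1/c\in(0,1)$). Setting $u=\sigma(x)$, $F=f\circ\sigma^{-1}$ and $G=g\circ\sigma^{-1}$, the closed form becomes $F(u)=-\sum_{k\geq0}\lambda^{-k-1}G(c^k u)$ for small $u>0$, where $G$ is smooth with support in some $(u_1,u_0)$, $0<u_1<u_0$. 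Choosing the support of $g$ short enough that $u_0/u_1<c$, the intervals $(u_1/c^k,u_0/c^k)$ are pairwise disjoint, so near $0$ at most one term is nonzero and $F(u)=-\lambda^{-k-1}G(c^k u)$ on each such interval. Because the inner map $u\mapsto c^k u$ is linear, differentiation introduces no Fa\`a di Bruno cross terms and $F^{(n)}(u)=-\lambda^{-k-1}c^{kn}G^{(n)}(c^k u)$.

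Finally I would choose $n$ with $c^n>|\lambda|$ (possible since $c>1$) and pick $v\in(u_1,u_0)$ with $G^{(n)}(v)\neq0$, which exists because a nonzero compactly supported smooth function is not a polynomial of degree $<n$. Evaluating at $u_k=v/c^k\to0^+$ gives $|F^{(n)}(u_k)|=|G^{(n)}(v)|\,|\lambda|^{-1}(c^n/|\lambda|)^k\to\infty$, so $F^{(n)}$ is unbounded near $0$. As $\sigma^{-1}$ is smooth with nonvanishing derivative at $0$, this forces $f\notin C^n$ at $a$, contradicting $f\in{\mathcal S}({\mathbb R})$. Hence $\lambda\in\sigma(C_\varphi)$, and together with Theorem \ref{th:polynomial_fixed_point} we conclude ${\mathbb C}\setminus\{0\}\subset\sigma(C_\varphi)$. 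The main obstacle is precisely this smoothness analysis at $a$: a naive estimate of $f^{(n)}$ in the original variable runs into competing Fa\`a di Bruno terms all of size comparable to $c^{kn}$, and the role of the Koenigs linearization (together with the positivity hypothesis, which yields the clean lower bound $\varphi_n'\geq c^n$ and the escape of orbits) is exactly to reduce this to the transparent linear scaling computation above.
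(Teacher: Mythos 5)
Your proposal is correct, but its key step takes a genuinely different route from the paper's. Both arguments share the same skeleton: Theorem \ref{th:polynomial_fixed_point} gives $\overline{{\mathbb D}}\setminus\{0\}\subset\sigma(C_\varphi)$; for $|\lambda|>1$ one assumes $C_\varphi-\lambda I$ invertible, solves $(C_\varphi-\lambda I)f=g$ for a bump $g$ supported in a fundamental interval $(x_1,x_0)\subset(a,\infty)$ of the backward dynamics, derives the closed form $f(x)=-\sum_{k\geq 0}\lambda^{-k-1}g(\varphi_k(x))$, localizes to a single term on each pulled-back interval, and contradicts smoothness of $f$ at $a$ by making some $f^{(n)}$ with $\varphi'(a)^n>|\lambda|$ blow up along a sequence tending to $a$. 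The difference is how the Fa\`a di Bruno cross terms are tamed. The paper chooses $g(x)=x$ on $J_0$, so that $f=-\varphi_k/\lambda^{k+1}$ on $J_k=\psi_k(J_0)$, and then uses the hypothesis $\varphi^{(n)}(a)\geq 0$ \emph{quantitatively}: via Fa\`a di Bruno it shows each $\varphi_k^{(n)}$ is increasing on $[a,+\infty)$ and satisfies $\varphi_{m+1}^{(n)}(a)\geq\varphi_m^{(n)}(a)\,\varphi'(a)^n$, then picks $n$ with $\varphi'(a)^n>|\lambda|$ and $m$ with $\varphi_m^{(n)}(a)\neq 0$. You instead conjugate by the Koenigs coordinate of the attracting inverse branch, turning the dynamics into exact multiplication by $c=\varphi'(a)$, so derivatives scale as $c^{kn}$ with no cross terms; the positivity hypothesis enters only \emph{qualitatively} (positive leading coefficient, $\varphi'\geq c$ and $\varphi(x)>x$ on $(a,\infty)$, hence monotone escape and $a$ the largest fixed point). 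What your route buys: it isolates the real mechanism --- a repelling fixed point with $\varphi'(a)>1$ together with $\varphi(x)>x$ for all $x>a$ would suffice --- and the blow-up computation becomes a transparent scaling argument with an arbitrary bump $g$; what it costs is the appeal to Koenigs' theorem, whereas the paper's proof is elementary and self-contained. One imprecision you should repair: $G=g\circ\sigma^{-1}$ is defined only on the linearization chart, so for $u\in(u_1/c^k,u_0/c^k)$ and $k'>k$ the expression $G(c^{k'}u)$ need not make sense, since $c^{k'}u$ may leave the chart. Either handle those terms in the $x$-variable, where they vanish because $\varphi_{k'}(x)\geq\varphi(\varphi_k(x))>\varphi(x_1)\geq x_0$ by monotone escape, or replace $\sigma^{-1}$ by its global Poincar\'e extension $h$ (entire, real on ${\mathbb R}$, with $h(cu)=\varphi(h(u))$ everywhere). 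With that cosmetic fix, the single-term identity, the scaling $F^{(n)}(u)=-\lambda^{-k-1}c^{kn}G^{(n)}(c^ku)$, and the final contradiction are all sound.
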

\begin{proof} Since $$\varphi(x) = a + \sum_{n=1}^\infty \frac{\varphi^{(n)}(a)}{n!}(x-a)^n$$ then $\varphi$ and all its derivatives are increasing in $[a, +\infty).$ An inductive argument using Fa\`a di Bruno formula implies that also $\varphi_k^{(n)}$ is increasing in $[a, +\infty)$ for every $k, n\in {\mathbb N}_0.$ We observe that $\varphi(x) > a + \varphi'(a)(x-a) > x$ for any $x > a.$ Hence $a$ is the largest fixed point of $\varphi.$
\par\medskip
We already know that $\overline{{\mathbb D}}\setminus \{0\} \subset \sigma\left(C_\varphi\right).$ We now fix $|\lambda| > 1$ and assume that $\lambda\notin \sigma(C_\varphi).$ We fix $x_0 > a$ and define $x_{k+1} = \psi(x_k),$ where $\psi$ stands for the inverse of $\varphi:[a,+\infty)\to [a,+\infty).$ Then $\left(x_k\right)_k$ is a decreasing sequence converging to $a.$ We put $I_k = \left(x_{k+1}, x_k\right),$ so that $I_k = \psi_k(I_0),$ and let $J_0$ be a closed subinterval of $I_0$ and $J_k:=\psi_k(J_0).$ Finally, we consider a compactly supported smooth function $g$ whose support is contained in $I_0$ and such that $g(x) = x$ for every $x\in J_0.$ Then there is $f\in {\mathcal S}({\mathbb R})$ satisfying $C_\varphi f -\lambda f=g.$ Hence
$$
f\left(\varphi_n(x)\right) = \lambda^{n} f(x) + \sum_{k=0}^{n-1}\lambda^{n-1-k}g\left(\varphi_k(x)\right) \ \ \forall n\in {\mathbb N}, x\in {\mathbb R}.
$$ Since $|\lambda| > 1$ and $\left(f(\varphi_n(x))\right)_n$ is a bounded sequence then
$$
f(x) = -\frac{1}{\lambda}\sum_{j=0}^{\infty}\lambda^{-j}g\left(\varphi_j(x)\right)\ \ \forall x\in {\mathbb R}.
$$
For every $x\in J_k$ we have $\varphi_k(x)\in J_0$ while $\varphi_n(x)\notin I_0$ for every $n\neq k.$ Consequently
$$
f(x) = -\lambda^{-k-1}g\left(\varphi_k(x)\right) = -\frac{\varphi_k(x)}{\lambda^{k+1}}\ \ \forall x\in J_k.
$$ In order to obtain a contradiction we proceed as follows. Our hypothesis and Fa\`a di Bruno formula permit to conclude
\begin{equation}\label{eq:estimacioninferior}
\varphi_{m+1}^{(n)}(a) \geq \varphi_{m}^{(n)}(a)\cdot \varphi'(a)^n.
\end{equation} We select $n_0\in {\mathbb N}$ so that $\varphi'(a)^{n_0} > |\lambda|.$ We can find $n > n_0$ and $m\in {\mathbb N}$ such that $\varphi_m^{(n)}(a)\neq 0.$ Otherwise, every iterate $\varphi_m$ would have degree less than or equal $n_0,$ which is a contradiction. From (\ref{eq:estimacioninferior}) we get
$$
\varphi_{k+m}^{(n)}(a) \geq \varphi_{m}^{(n)}(a)\cdot \varphi'(a)^{kn}\ \ \forall k\in {\mathbb N}.
$$ Finally, for every $x\in J_{k+m}$ we obtain, with $C = \frac{\varphi_{m}^{(n)}(a)}{|\lambda|^{1+m}},$
$$
\begin{array}{*2{>{\displaystyle}l}}
 \left|f^{(n)}(x)\right| & = \frac{\varphi_{k+m}^{(n)}(x)}{|\lambda|^{k+m+1}} \geq \frac{\varphi_{k+m}^{(n)}(a)}{|\lambda|^{k+m+1}}\\ & \\ & \geq C \left(\frac{\varphi'(a)^{n}}{|\lambda|}\right)^k.
\end{array}
$$ We conclude that $f^{(n)}$ is not a bounded function, which is a contradiction.
\end{proof}

The following result will be useful in the proof of Proposition \ref{prop:negative-odd}.

\begin{proposition}\label{rem:composition}{\rm Let $\eta$ be a polynomial with odd degree and negative leading coefficient such that $\varphi = \eta\circ \eta$ satisfies the hypothesis in Theorem \ref{th:simplefixedpoint}. Then ${\mathbb C}\setminus\{0\}\subset \sigma(C_\eta).$
}
\end{proposition}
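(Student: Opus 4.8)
The plan is to prove directly that every $\mu$ with $|\mu|>1$ belongs to $\sigma(C_\eta)$; combined with Theorem \ref{th:polynomial_fixed_point} (applicable since $\eta$, being of odd degree, has a real fixed point) this also gives $\overline{\mathbb D}\setminus\{0\}\subset\sigma(C_\eta)$, and hence the full claim. One is tempted to argue abstractly: Theorem \ref{th:simplefixedpoint} applied to $\varphi=\eta_2$ gives $\mathbb C\setminus\{0\}\subset\sigma(C_{\eta_2})$, and the spectral mapping relation $\sigma(C_{\eta_2})=\{\lambda^2:\lambda\in\sigma(C_\eta)\}$ then forces at least one of the two square roots $\pm\mu$ into $\sigma(C_\eta)$. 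This, however, does not pin down \emph{both} roots, so I would instead argue by hand, reusing the growth estimates already obtained for $\varphi$. Note that Theorem \ref{th:simplefixedpoint} cannot be applied to $\eta$ itself, precisely because at its relevant fixed point $\eta'$ is $\le -1$ rather than $>1$ (Lemma \ref{lem:negativederivative}).

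So I would fix $\mu$ with $|\mu|>1$ and suppose, for contradiction, that $C_\eta-\mu I$ is invertible. Then for a suitable $g\in\mathcal S(\mathbb R)$ there is $f\in\mathcal S(\mathbb R)$ with $f\circ\eta-\mu f=g$; iterating and dividing by $\mu^n$, the boundedness of $f$ together with $|\mu|>1$ forces $\mu^{-n}f(\eta_n(x))\to0$, so that
$$
f = -\sum_{k=0}^\infty \mu^{-(k+1)}\,g\circ\eta_k .
$$
Since $\varphi=\eta_2$ satisfies the hypotheses of Theorem \ref{th:simplefixedpoint}, I may freely use, from its proof, that $\varphi$ and all its derivatives are increasing on $[a,+\infty)$, that $a$ is the largest fixed point of $\varphi$, and the lower bound (\ref{eq:estimacioninferior}). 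I then take $g$ exactly as there: supported in $I_0=(x_1,x_0)\subset(a,+\infty)$, where $x_k=\psi_k(x_0)$ and $\psi$ is the inverse of $\varphi$ on $[a,+\infty)$, and with $g(x)=x$ on a closed subinterval $J_0\subset I_0$.

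The decisive extra ingredient, borrowed from case (b)(ii) of Theorem \ref{th:polynomial_fixed_point}, is to shrink the support so that $\eta(x)<a$ for $x\in(a,a+\delta]$. Maximality of $a$ as a fixed point of $\eta_2$ rules out $\eta(a)>a$, so only two cases occur: if $\eta(a)<a$ this is automatic by continuity, and if $\eta(a)=a$ it follows from $\eta'(a)\le-1$ (Lemma \ref{lem:negativederivative}). With this arrangement the odd iterates $\eta_{2k+1}=\eta\circ\varphi_k$ of points near $a$ fall below $a$ and hence miss $\operatorname{supp} g\subset(a,+\infty)$, whereas among the even iterates only $\eta_{2k}=\varphi_k$ can meet the support. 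Consequently, for $y\in J_k:=\psi_k(J_0)$ the series collapses to the single surviving term, giving $f(y)=-\mu^{-(2k+1)}\varphi_k(y)$ and therefore $f^{(n)}(y)=-\mu^{-(2k+1)}\varphi_k^{(n)}(y)$.

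Finally I would run the growth argument of Theorem \ref{th:simplefixedpoint} with $|\mu|^2$ in the role of $|\lambda|$: choosing $n$ with $\varphi'(a)^n>|\mu|^2$ and $\varphi_m^{(n)}(a)\neq0$ for some $m$ (possible, since otherwise all iterates $\varphi_m$ would have degree bounded by $n$), the monotonicity of $\varphi_{k+m}^{(n)}$ on $[a,+\infty)$ and (\ref{eq:estimacioninferior}) yield, for $y\in J_{k+m}$, an estimate $|f^{(n)}(y)|\ge C\,(\varphi'(a)^n/|\mu|^2)^k\to\infty$ with $C=\varphi_m^{(n)}(a)/|\mu|^{2m+1}>0$, contradicting $f\in\mathcal S(\mathbb R)$. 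I expect the main obstacle to be the bookkeeping of the iterates rather than the estimate: one must guarantee simultaneously that the odd iterates escape $\operatorname{supp} g$ and that the even iterates reproduce $\varphi_k$ on the pieces $J_k$, and this is exactly where the sign information $\eta'(a)\le-1$ of Lemma \ref{lem:negativederivative} (via the maximality of $a$) is essential. Once this is secured, the computation reduces verbatim to the one already carried out for $\varphi$.
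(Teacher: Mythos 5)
Your proposal reproduces the paper's own proof almost step by step: the closed unit disc comes from Theorem \ref{th:polynomial_fixed_point}, for $|\mu|>1$ one expands $f$ as the resolvent series $-\sum_k\mu^{-(k+1)}g\circ\eta_k$, one takes the same $g$ as in Theorem \ref{th:simplefixedpoint} with the support pushed inside $(a,a+\delta)$ via Lemma \ref{lem:negativederivative}, the series collapses to the even iterates $\eta_{2j}=\varphi_j$, and the growth estimate of Theorem \ref{th:simplefixedpoint} is rerun with $|\mu|^2$ in place of $|\lambda|$. All of this is exactly what the paper does, and your observation that the spectral mapping relation alone cannot capture both square roots correctly explains why the direct argument is needed.

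There is, however, one under-justified step, and it is precisely the one you call decisive. From ``$\eta(x)<a$ on $(a,a+\delta]$'' you conclude that \emph{all} odd iterates $\eta_{2j+1}(y)=\eta(\varphi_j(y))$, $y\in J_k$, fall below $a$. Your arrangement yields this only when the preceding even iterate $\varphi_j(y)$ lies in $(a,a+\delta]$, i.e.\ for $j\le k$ (where $\varphi_j(y)\in J_{k-j}\subset(a,x_0)$). For $j>k$ one has $\varphi_j(y)>x_0$, eventually far above $a+\delta$, and nothing you have written controls $\eta(\varphi_j(y))$ there; an odd iterate landing in $\operatorname{supp} g$ would add extra terms to the series and destroy the identity $f=-\mu^{-(2k+1)}\varphi_k$ on $J_k$. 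The paper closes this (as does case (b)(ii) of Theorem \ref{th:polynomial_fixed_point}, from which you borrow) with a one-line contradiction that covers every $j$: if $\eta_{2j+1}(y)\in\operatorname{supp} g\subset(a,a+\delta)$, then $\eta_{2j+2}(y)=\eta\bigl(\eta_{2j+1}(y)\bigr)<a$, while $\eta_{2j+2}(y)=\varphi_{j+1}(y)>a$ because $\varphi$ maps $(a,+\infty)$ into itself. So the fact you need is true and the repair is immediate, but as written your bookkeeping does not cover the iterates with $j>k$.
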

\begin{proof}
Since $\eta$ has odd degree then it has fixed points and we can apply Theorem \ref{th:polynomial_fixed_point} to get $\overline{{\mathbb D}}\setminus \{0\} \subset \sigma\left(C_\eta\right).$ We now fix $|\lambda| > 1$ and assume that $\lambda\notin \sigma(C_\eta).$ We observe that $a$ is the largest fixed point of $\varphi.$ From Lemma \ref{lem:negativederivative} we get $\delta > 0$ such that $\eta(x) < a$ for all $x\in (a, a+\delta).$ Now we fix $x_0\in (a, a+\delta)$ and define $g$ as in the proof of Theorem \ref{th:simplefixedpoint}. Then there is $f\in {\mathcal S}({\mathbb R})$ such that
$$
f(x) = -\frac{1}{\lambda}\sum_{n=0}^{\infty}\lambda^{-n}g\left(\eta_n(x)\right)\ \ \forall x\in {\mathbb R}.
$$ We observe that $\eta_{2j} = \varphi_j$ and $g\left(\eta_{2j+1}(x)\right) = 0$ for all $j\in {\mathbb N}_0$ and $x\geq a$ (otherwise $\eta_{2j+1}(x)\in (a, a+\delta)$ and $\eta_{2j+2}(x) < a,$ which is a contradiction). Then
$$
f(x) = -\frac{1}{\lambda}\sum_{j=0}^{\infty}\lambda^{-2j}g\left(\varphi_j(x)\right)\ \ \forall x\geq a.
$$ Now we proceed as in Theorem \ref{th:simplefixedpoint} to get a contradiction.
\end{proof}

As an application of Theorem \ref{th:simplefixedpoint} and Proposition \ref{prop:elementary} we have the following.

\begin{example}{\rm Let $\varphi(x) = x^p,$ $\, p\geq 2.$ Then $\sigma(C_\varphi) = {\mathbb C}.$
}
\end{example}

\begin{example}\label{ex:simplefixedpoint}{\rm Let $\varphi$ be a polynomial of degree $N>1$ with positive leading coefficient and complex fixed points $z_1,\dots, z_N$ such that 
\begin{enumerate}
 \item $z_N \in {\mathbb R}$ and the multiplicity of $z_N$ as a fixed point is 1.
\item $Re(z_k)\leq z_N$ for $k<N.$
\end{enumerate}
Then ${\mathbb C}\setminus\{0\}\subset \sigma(C_\varphi).$
}
\end{example}

In fact, we can apply Theorem \ref{th:simplefixedpoint} taking $a = z_N.$

\section{Quadratic polynomials}
\par\medskip
Next we apply the previous results to discuss the spectrum of $C_\varphi$ in the case that $\varphi$ is a quadratic polynomial. Such a polynomial $\varphi(x) = a_0 + a_1x + a_2 x^2$ ($a_2 \neq 0$) is linearly equivalent to $\psi(x) = x^2 + c$ where $c = a_0a_2 + \frac{a_1}{2} - \frac{a_1^2}{4}.$ In fact, take $\ell(x) = ax + b$ where $a = a_2, b = \frac{a_1}{2}.$ It is routine to check that $\varphi = \ell^{-1}\circ \psi\circ \ell.$ We observe that $0\in \sigma(C_\psi) = \sigma(C_\varphi)$ since the range of $C_\psi$ consists of even functions.
\par\medskip
$c > \frac{1}{4}$ implies that $\varphi$ and $\psi$ lack fixed points, hence $\sigma(C_\varphi) = \{0\}$ (Theorem \ref{th:withoutfixedpoints}). In the case $c < \frac{1}{4}$ we have that $\varphi$ (and also $\psi$) has two different fixed points and we can apply Theorem \ref{th:simplefixedpoint} (see also Example \ref{ex:simplefixedpoint}) to conclude that $\sigma(C_\varphi) = {\mathbb C}.$
\par\medskip
Our next aim is to discuss the case $c = \frac{1}{4}.$

\begin{lemma}\label{lem:quadratic1}{\rm Let $\varphi(x) = x^2 + \frac{1}{4}$ be given. Then, for every $r > 1$ there exist $C > 0$ and $p\in {\mathbb N}$ such that
$$
 \left|\varphi_m'(x)\right| \leq C r^m\left(1+\varphi_m(x)\right)^p.
 $$
 }
\end{lemma}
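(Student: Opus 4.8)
The plan is to run an induction on $m$ that reduces the whole estimate to a single one–variable inequality. Since $\varphi'(t)=2t$, the chain rule gives $\varphi_{m+1}'(x)=\varphi'(\varphi_m(x))\,\varphi_m'(x)=2\varphi_m(x)\,\varphi_m'(x)$, and because $\varphi_m(x)=\varphi(\varphi_{m-1}(x))\ge\tfrac14>0$ for every $m\ge1$, I may treat $\varphi_m(x)$ as positive. Writing $u=\varphi_m(x)$ and using $1+\varphi_{m+1}(x)=\tfrac54+u^2$, the estimate $|\varphi_m'(x)|\le r^m(1+\varphi_m(x))^p$ will propagate from $m$ to $m+1$ \emph{provided} the single–step inequality
\[
2u\,(1+u)^p\le r\bigl(\tfrac54+u^2\bigr)^p\qquad(u\ge\tfrac14)
\]
holds for a suitable $p=p(r)\in{\mathbb N}$; indeed the inductive hypothesis then yields $|\varphi_{m+1}'(x)|\le r^m\,2u(1+u)^p\le r^{m+1}(\tfrac54+u^2)^p=r^{m+1}(1+\varphi_{m+1}(x))^p$. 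This would prove the lemma with $C=1$.

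I would start the induction at $m=1$ rather than $m=0$, because $\varphi_0(x)=x$ need not be positive; the base case is immediate from $2|x|\le1+x^2\le\tfrac54+x^2\le(\tfrac54+x^2)^p$. The real content is therefore entirely contained in the single–step inequality. Dividing by $(1+u)^p$ it reads $2u\le r\,G(u)^p$ with $G(u)=(\tfrac54+u^2)/(1+u)$, and the identity $\tfrac54+u^2-(1+u)=(u-\tfrac12)^2$ shows $G(u)\ge1$, with equality only at the fixed point $u=\tfrac12$. For $u\le r/2$ the inequality is trivial since $2u\le r\le rG(u)^p$, so only the range $u>r/2$ (which forces $u>\tfrac12$, hence $G(u)>1$) is at stake, where it becomes $p\ge\Phi(u):=\log(2u/r)/\log G(u)$.

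The main obstacle — and the point where the hypothesis $r>1$ is used decisively — is showing that $\Phi$ is bounded on $(r/2,\infty)$, so that an integer $p(r)\ge\sup\Phi$ exists. The function $\Phi$ is continuous there and has finite limits, tending to $0$ as $u\to(r/2)^+$ and to $1$ as $u\to\infty$; continuity together with finite endpoint limits gives boundedness. The delicate behaviour is near the parabolic fixed point $u=\tfrac12$, where $\varphi'(\tfrac12)=1$ and $G(u)-1\sim\tfrac23(u-\tfrac12)^2$, so $\log G$ has a double zero there. Because $r>1$ we have $r/2>\tfrac12$, so on the binding region $u>r/2$ the numerator $\log(2u/r)$ already vanishes at the left endpoint while $\log G$ stays strictly positive, which is exactly what keeps $\Phi$ finite. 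For $r=1$ the two zeros would collide and $\Phi$ would be unbounded; consistently, one expects $p(r)\to\infty$ as $r\downarrow1$. Thus, once the elementary boundedness of $\Phi$ near the fixed point is checked, the induction closes and the lemma follows.
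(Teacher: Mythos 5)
Your proposal is correct, but it proves the lemma by a genuinely different route than the paper. You run a single induction on $m$ with constant $C=1$, reducing everything to the one-variable inequality $2u(1+u)^p\leq r\left(\tfrac54+u^2\right)^p$ for $u\geq\tfrac14$, which you verify by showing that $\Phi(u)=\log(2u/r)/\log G(u)$, $G(u)=(\tfrac54+u^2)/(1+u)$, is bounded on $(r/2,\infty)$; the argument is sound (continuity plus finite limits $0$ at $u\to (r/2)^+$ and $1$ at $u\to\infty$ gives boundedness, and the identity $\tfrac54+u^2-(1+u)=(u-\tfrac12)^2$ shows the denominator only degenerates at the parabolic point $u=\tfrac12$, which the hypothesis $r>1$ keeps out of the binding region). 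The paper instead works from the explicit product formula $\varphi_m'(x)=2^m\prod_{j=0}^{m-1}\varphi_j(x)$ and does a case analysis in $x$: near the fixed point ($0\leq x\leq\tfrac12$), far out ($x\geq 1+\tfrac{\sqrt3}{2}$, where $\varphi_m'\leq\varphi_m^2$ by induction), and in between, splitting the orbit of $x$ at the first time it enters $[r/2,\infty)$, which is where the factor $r^m$ appears. Your approach is more elementary and quantitatively cleaner ($C=1$), and it isolates very transparently why $r>1$ is needed — consistent with the failure of power boundedness for this symbol. What the paper's heavier machinery buys is reusability: the product representation is exactly what drives the proof of Lemma \ref{lem:quadratic2} for higher derivatives, via the identity $\varphi_m''=\varphi_m'\sum_{j}\varphi_j'/\varphi_j$, and the same orbit-splitting pattern recurs in the cubic case (Theorem \ref{th:cubic-positive}). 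Your induction as written handles only the first derivative, so to complete the application one would still need the product identity or a considerably more elaborate inductive scheme for $n\geq 2$.
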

\begin{proof}
 We first observe that $|\varphi'(x)| \leq 2\varphi(x)$ with equality for $x = \pm\frac{1}{2}$ and also
 \begin{equation}\label{eq:recurrence-derivative}
  \varphi_{m+1}'(x) = 2\varphi_m(x)\varphi_m'(x).
 \end{equation} Proceeding by recurrence we conclude
 \begin{equation}\label{eq:derivative-as-a-product}
  \varphi_m'(x) = 2^m\prod_{j=0}^{m-1}\varphi_j(x),
 \end{equation} where $\varphi_0(x) = x.$
\par\medskip
 Since every $\varphi_m$ ($m\geq 1$) is even and $\varphi_m'$ is odd we only need to consider the case $x\geq 0.$ Now we proceed in several steps.
 \par\medskip
(i) For $0\leq x\leq \frac{1}{2}$ we have $\varphi_m(x)\leq \varphi_m(\frac{1}{2}) = \frac{1}{2}.$ An induction argument gives
$$
\varphi_m'(x)\leq \varphi_m'(\frac{1}{2}) = 1.
$$
\par\medskip
(ii) For $x\geq x_0:= 1 + \frac{\sqrt{3}}{2}$ we have $\varphi'(x)\leq \varphi(x).$ Since $\varphi(x) > x$ then also $2\varphi_m(x) \leq \varphi_{m+1}(x)$ for every $m\in {\mathbb N}.$ We check that
$$
\varphi_m'(x) \leq \varphi_m^2(x)\ \ \forall m\in {\mathbb N},\ x\geq x_0.
$$ In fact, this inequality is obvious for $m = 1$ and assuming that it is true for $m$ we obtain
$$
\varphi_{m+1}'(x) = 2\varphi_m(x)\varphi_m'(x)\leq \varphi_{m+1}(x)\varphi_m^2(x)\leq \varphi_{m+1}^2(x).
$$
\par\medskip
(iii) Take $m_0\in {\mathbb N}$ such that $\varphi_{m_0}(\frac{r}{2}) \geq x_0.$ Then, for every $x\geq \frac{r}{2}$ and $m\geq m_0$ we put
$$
\varphi_m(x) = \varphi_{m-m_0}\left(\varphi_{m_0}(x)\right),
$$ where $\varphi_{m_0}(x) \geq x_0.$ Hence
$$
\varphi_m'(x) = \varphi_{m-m_0}'\left(\varphi_{m_0}(x)\right)\cdot \varphi_{m_0}'(x) \leq \left(1+\varphi_{m-m_0}\left(\varphi_{m_0}(x)\right)\right)^2\cdot \varphi_{m_0}'(x).
$$ From (\ref{eq:derivative-as-a-product}) we obtain $\varphi_{m_0}'(x)\leq 2^{m_0}\left(1+\varphi_m(x)\right)^{m_0}.$ Finally, for $p = m_0 + 2$ we conclude
$$
\varphi_m'(x) \leq 2^{m_0}\left(1 + \varphi_m(x)\right)^p\ \ \forall m\geq m_0, x\geq \frac{r}{2}.
$$ Hence we can find $C > 0$ such that
$$
\varphi_m'(x) \leq C\left(1 + \varphi_m(x)\right)^p\ \ \forall m\in {\mathbb N}, x\geq \frac{r}{2}.
$$
\par\medskip
(iv) We now consider $\frac{1}{2}\leq x < \frac{r}{2}$ and select $n_x\geq 1$ with the property that $\varphi_j(x)\geq \frac{r}{2}$ whenever $j\geq n_x$ while $\varphi_j(x) < \frac{r}{2}$ for $0\leq j < n_x.$
\par\medskip
If $m < n_x + 1$ then, from (\ref{eq:derivative-as-a-product}), we get $\varphi_m'(x)\leq r^m.$ Otherwise we decompose
$$
\varphi_m'(x) = \prod_{j=0}^{n_x-1}(2\varphi_j(x))\cdot \prod_{j= n_x}^{m-1}(2\varphi_j(x))
$$ The first factor is dominated by $r^{n_x}\leq r^m,$ while the second one coincides with
$$
2^{m-n_x}\prod_{k= 0}^{m-1-n_x}\varphi_k\left(\varphi_{n_x}(x)\right) = \varphi_{m-n_x}'\left(\varphi_{n_x}(x)\right).
$$ Since $\varphi_{n_x}(x)\geq \frac{r}{2}$ we can use the estimates in (iii) to conclude
$$
\varphi_m'(x) \leq C r^m \left(1+\varphi_{m-n_x}\left(\varphi_{n_x}(x)\right)\right)^p = C r^m \left(1 + \varphi_m(x)\right)^p.
$$
\end{proof}

 \begin{lemma}\label{lem:quadratic2}{\rm Let $\varphi(x) = x^2 + \frac{1}{4}$ be given. Then, for every $r > 1$ and $n\in {\mathbb N}$ there exist $C > 0$ and $p\in {\mathbb N}$ such that
$$
 \left|\varphi_m^{(n)}(x)\right| \leq C r^m\left(1+\varphi_m(x)\right)^p.
 $$
 }
\end{lemma}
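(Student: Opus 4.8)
The plan is to argue by induction on the order $n$ of the derivative, the base case $n=1$ being exactly Lemma \ref{lem:quadratic1}. Everything rests on the recurrence $\varphi_{m+1}=\varphi_m^2+\tfrac14$, which comes from $\varphi_{m+1}=\varphi\circ\varphi_m$. Differentiating it $n\ge 1$ times by the Leibniz rule the constant disappears, and splitting off the two extreme terms gives
\[
\varphi_{m+1}^{(n)} = 2\varphi_m\,\varphi_m^{(n)} + R_m,\qquad R_m := \sum_{j=1}^{n-1}\binom{n}{j}\varphi_m^{(j)}\varphi_m^{(n-j)},
\]
where the remainder $R_m$ involves only derivatives of order strictly between $0$ and $n$. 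Read in $m$, this is a first-order linear recurrence for $a_m:=\varphi_m^{(n)}(x)$, with multiplier $2\varphi_m(x)$ and inhomogeneous term $R_m(x)$.

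First I would fix $r>1$ and estimate the remainder. Applying the inductive hypothesis (available for orders $1,\dots,n-1$) with the auxiliary parameter $\sqrt r>1$, each factor obeys $|\varphi_m^{(j)}|\le C_j(\sqrt r)^m(1+\varphi_m)^{p_j}$; multiplying two such factors turns $(\sqrt r)^{2m}$ into $r^m$, so that $|R_m(x)|\le C' r^m(1+\varphi_m(x))^q$ for suitable $C',q$.

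Next I would solve the recurrence explicitly. Since $\varphi_0(x)=x$ has vanishing $n$-th derivative for $n\ge 2$, the homogeneous contribution drops out and
\[
\varphi_m^{(n)}(x)=\sum_{k=0}^{m-1}\Big(\prod_{i=k+1}^{m-1}2\varphi_i(x)\Big)R_k(x).
\]
The crucial observation is that the product telescopes into a single first derivative: using (\ref{eq:derivative-as-a-product}) together with $\varphi_j(\varphi_{k+1}(x))=\varphi_{j+k+1}(x)$ one obtains $\prod_{i=k+1}^{m-1}2\varphi_i(x)=\varphi_{m-k-1}'(\varphi_{k+1}(x))$, whose argument iterates back to $\varphi_m(x)$. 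Hence Lemma \ref{lem:quadratic1}, applied with a parameter $\tilde r\in(1,r)$, bounds this product by $C_1\tilde r^{\,m-k-1}(1+\varphi_m(x))^{p_1}$. Moreover, since $\varphi(y)-y=(y-\tfrac12)^2\ge 0$, the sequence $\varphi_k(x)$ is nondecreasing in $k$, so $1+\varphi_k(x)\le 1+\varphi_m(x)$ for $k\le m$ and every factor $(1+\varphi_k)$ hidden in $R_k$ can be replaced by $(1+\varphi_m)$. Collecting the estimates leaves the scalar sum $\sum_{k=0}^{m-1}\tilde r^{\,m-1-k}r^{k}$, which, because $\tilde r<r$, is geometric and bounded by $r^m/(r-\tilde r)$; this yields exactly the target factor $r^m$ and fixes $p=p_1+q$.

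The points requiring care are purely bookkeeping: choosing the auxiliary rates ($\sqrt r$ for the remainder, $\tilde r<r$ for the product) so that all exponential factors recombine into the single rate $r^m$, and disposing of the term $k=0$ separately, where $R_0$ is merely a constant and the accompanying product is again controlled by Lemma \ref{lem:quadratic1}. I do not expect a genuine obstacle beyond this accounting, precisely because the semigroup identity $\varphi_{m-k-1}'(\varphi_{k+1}(\cdot))=\prod 2\varphi_i(\cdot)$ converts every higher-order estimate into the already established first-order one.
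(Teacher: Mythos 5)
Your proof is correct, but it follows a genuinely different route from the paper's. The paper works with the explicit product formula (\ref{eq:derivative-as-a-product}) and its logarithmic-derivative expansion $\varphi_m''=\varphi_m'\sum_{j=0}^{m-1}\varphi_j'/\varphi_j$ (identity (\ref{eq:second-derivative})), which is only valid away from the zero of $\varphi_0(x)=x$; it therefore needs a three-way case analysis ($x\geq\tfrac12$ via the geometric sum, $x\in[0,\tfrac12]$ via monotonicity of $\varphi_m^{(n)}$ on $[0,+\infty)$ together with $\varphi_m(\tfrac12)=\tfrac12$, and $x<0$ by parity), and for $n>2$ the inductive step through Leibniz and Fa\`a di Bruno is only sketched. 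You instead differentiate the composition recurrence $\varphi_{m+1}=\varphi_m^2+\tfrac14$ exactly $n$ times, read the result as a first-order linear recurrence in $m$ with multiplier $2\varphi_m$, and solve it by variation of constants; the decisive point, which I checked and which is correct, is that the cofactor $\prod_{i=k+1}^{m-1}2\varphi_i(x)$ equals $\varphi_{m-k-1}'\left(\varphi_{k+1}(x)\right)$ with $\varphi_{m-k-1}\left(\varphi_{k+1}(x)\right)=\varphi_m(x)$, so Lemma \ref{lem:quadratic1} applies verbatim to it. Your rate bookkeeping ($\sqrt r$ for the inhomogeneous term $R_k$, $\tilde r<r$ for the cofactor, geometric summation) is sound, the vanishing of the homogeneous term uses correctly that $\varphi_0^{(n)}=0$ for $n\geq 2$, and the monotonicity $\varphi_k(x)\leq\varphi_m(x)$ from $\varphi(y)-y=(y-\tfrac12)^2\geq 0$ legitimately upgrades every factor $1+\varphi_k$ (which is $\geq\tfrac54$ for $k\geq1$) to $1+\varphi_m$, with $k=0$ handled separately as you note. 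What your approach buys is uniformity in $x\in{\mathbb R}$ -- no case splitting and no appeal to parity or to monotonicity of higher derivatives -- and a fully explicit induction on $n$ where the paper's is schematic; what the paper's approach buys is the structural identity (\ref{eq:second-derivative}), which it reuses later in the cubic case (Theorem \ref{th:cubic-positive}).
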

\begin{proof} It is enough to consider $x\geq 0.$ The case $n = 1$ is the content of the previous Lemma. Let us now consider $n = 2.$ From (\ref{eq:derivative-as-a-product}) we obtain, for every $x\geq \frac{1}{2},$
\begin{equation}\label{eq:second-derivative}
\varphi_m''(x) = \sum_{j=0}^{m-1}2\varphi_j'(x)\prod_{i\neq j}2\varphi_i(x) = \varphi_m'(x) \sum_{j=0}^{m-1}\frac{\varphi_j'(x)}{\varphi_j(x)}.
\end{equation}
\noindent
Hence
$$
\varphi_m''(x) \leq \varphi_m'(x)\sum_{j=0}^{m-1}2\varphi_j'(x).
$$ We now fix $r > 1$ and take $C > 0$ and $p\in {\mathbb N}$ such that
$$
\left|\varphi_j'(x)\right| \leq C r^j\left(1+\varphi_j(x)\right)^p\ \ \forall j\in {\mathbb N}_0, x\in {\mathbb R}.
$$ Then, for every $x\geq \frac{1}{2},$
$$
\begin{array}{*2{>{\displaystyle}l}}
\left|\varphi_m''(x)\right| & \leq 2 C^2 r^m\left(1+\varphi_m(x)\right)^{2p} \sum_{j=0}^{m-1} r^j \\ & \\ & \leq 2 \frac{C^2}{r-1} r^{2m}\left(1+\varphi_m(x)\right)^{2p}.
 \end{array}
$$ Since $r > 1$ is arbitrary we conclude that for every $r > 1$ there exist $C > 0$ and $q\in {\mathbb N}$ such that $\left|\varphi_m''(x)\right| \leq C r^m \left(1+\varphi_m(x)\right)^{q}$ whenever $x\geq \frac{1}{2}.$ 
\par\medskip
For $n > 2$ we apply (\ref{eq:second-derivative}) to get 
$$
\left|\varphi_m^{(n)}(x)\right| = \left|\left(\varphi_m''\right)^{(n-2)}(x)\right| \leq \sum_{k=0}^{n-2}\binom{n-2}{k}\left|\varphi_m^{(k+1)}(x)\right| \sum_{j=0}^{m-1}\left|\left(\frac{\varphi_j'}{\varphi_j}\right)^{(n-2-k)}(x)\right|.
$$ Now, an application of Leibnitz rule and Fa\`a di Bruno formula permits to proceed by induction in order to prove the desired result for $x\geq \frac{1}{2}$.
\par\medskip
On the other hand, as $\varphi_m^{(n)}$ is increasing in $[0, +\infty)$ then for all $x\in [0, \frac{1}{2}]$ we have
$$
0\leq \varphi_m^{(n)}(x) \leq \varphi_m^{(n)}(\frac{1}{2})\leq C r^m \left(1 + \varphi_m(\frac{1}{2})\right)^p
$$ for some $p$ and $C > 0$ which only depend on $n.$ Since $\varphi_m(\frac{1}{2}) = \frac{1}{2}$ we are done.
\end{proof}

\begin{theorem}{\rm Let $\varphi(x) = x^2 + \frac{1}{4}$ be given. Then $\sigma\left(C_\varphi\right) = \overline{{\mathbb D}}.$
 }
\end{theorem}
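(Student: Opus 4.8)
The plan is to establish the two inclusions $\overline{{\mathbb D}}\subset \sigma(C_\varphi)$ and $\sigma(C_\varphi)\subset \overline{{\mathbb D}}$ separately. For the first, I would observe that $\varphi(x)=x^2+\frac14$ has the single (double) fixed point $a=\frac12$, since $\varphi(x)-x=(x-\frac12)^2$. Hence Theorem \ref{th:polynomial_fixed_point} immediately gives $\overline{{\mathbb D}}\setminus\{0\}\subset\sigma(C_\varphi)$, and because $\varphi'(x)=2x$ vanishes at $0$, Proposition \ref{prop:elementary}(c) yields $0\in\sigma(C_\varphi)$ as well. Thus $\overline{{\mathbb D}}\subset\sigma(C_\varphi)$ with no further work.

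The substance of the proof is the reverse inclusion, namely that every $\lambda$ with $|\lambda|>1$ lies in the resolvent set. First I would check that $\varphi$ satisfies the hypotheses of Lemma \ref{lem:convergencia_serie}. Condition (i) is precisely the content of Lemma \ref{lem:quadratic2} (which contains Lemma \ref{lem:quadratic1}), and it holds for every $r>1$. Condition (ii), $|x|\le (1+|\varphi_m(x)|)^q$, is elementary: since $\varphi(y)\ge y$ for all $y$, one has $\varphi_m(x)\ge \varphi_1(x)=x^2+\frac14$ for every $m\ge 1$, so for $|x|\ge 1$ we get $|x|\le x^2\le \varphi_m(x)$, while for $|x|<1$ trivially $|x|<1\le 1+\varphi_m(x)$; in either case $q=1$ works. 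Consequently Lemma \ref{lem:convergencia_serie} guarantees that $\sum_m \mu^m\,f\circ\varphi_m$ converges in ${\mathcal S}({\mathbb R})$ for every $f\in{\mathcal S}({\mathbb R})$ and every $|\mu|<1$. Crucially, condition (i) is \emph{not} available with $r=1$ (the operator is not power bounded, $\varphi$ having a fixed point), so this convergence is restricted to $|\mu|<1$, which is exactly what confines the spectrum to the closed disc.

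Now fixing $\lambda$ with $|\lambda|>1$ and writing $\mu=1/\lambda$, I would prove that $C_\varphi-\lambda I$ is a bijection. For surjectivity, given $g\in{\mathcal S}({\mathbb R})$ I set
$$
f=-\sum_{k=0}^\infty \frac{1}{\lambda^{k+1}}\,g\circ\varphi_k,
$$
which converges in ${\mathcal S}({\mathbb R})$ by the previous paragraph, and a telescoping computation shows $C_\varphi f-\lambda f=g$. For injectivity, if $C_\varphi f=\lambda f$ then iterating gives $f(x)=\lambda^{-m}f(\varphi_m(x))$; since $f$ is bounded and $|\lambda|>1$, letting $m\to\infty$ forces $f\equiv 0$. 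As $C_\varphi-\lambda I$ is thus a continuous bijection of the Fr\'echet space ${\mathcal S}({\mathbb R})$, the open mapping theorem makes it a topological isomorphism, so $\lambda\notin\sigma(C_\varphi)$. Combining the two inclusions yields $\sigma(C_\varphi)=\overline{{\mathbb D}}$.

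The main obstacle is in fact already dispatched by the two preceding quadratic lemmas: the delicate growth estimate $|\varphi_m^{(n)}(x)|\le C r^m(1+\varphi_m(x))^q$ for all $r>1$ is what makes the resolvent series converge for $|\lambda|>1$, and securing it required the region-by-region analysis of Lemmas \ref{lem:quadratic1} and \ref{lem:quadratic2}. Within the theorem itself the only points needing attention are the verification of condition (ii) and the telescoping identity defining $f$, both of which are routine.
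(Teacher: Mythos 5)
Your proposal is correct and takes essentially the same route as the paper: the containment $\overline{{\mathbb D}}\subset\sigma(C_\varphi)$ via Theorem \ref{th:polynomial_fixed_point} plus the trivial membership $0\in\sigma(C_\varphi)$ (the paper cites non-injectivity of $C_\varphi$, you cite Proposition \ref{prop:elementary}(c); both are immediate), and the reverse inclusion by checking hypotheses (i) and (ii) of Lemma \ref{lem:convergencia_serie} through Lemmas \ref{lem:quadratic1} and \ref{lem:quadratic2}, then inverting $C_\varphi-\lambda I$ for $|\lambda|>1$ with the Neumann-type series, the iteration/boundedness argument for injectivity, and the open mapping theorem. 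Your explicit verification of condition (ii) and of the telescoping identity merely spells out what the paper calls obvious.
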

\begin{proof} Since $\varphi$ admits a fixed point and $C_\varphi$ is not injective then $\overline{{\mathbb D}}$ is contained in $\sigma\left(C_\varphi\right)$ by Theorem \ref{th:polynomial_fixed_point}. To finish we show that $C_\varphi - \lambda I$ is invertible for every $|\lambda| > 1.$
\par\medskip
(a) $C_\varphi - \lambda I$ is injective for $|\lambda| > 1$. In fact, $C_\varphi(f) = \lambda f$ implies $f\left(\varphi_n(x)\right) = \lambda^n f(x)$ for every $x\in {\mathbb R}$ and $n\in {\mathbb N}.$ Since the left hand side is bounded and $|\lambda| > 1$ then $f(x) = 0$ for every $x\in {\mathbb R}.$
\par\medskip
(b) $C_\varphi - \lambda I$ is surjective for $|\lambda| > 1$. It suffices to show that
$$
\sum_{m=0}^\infty \frac{f\circ \varphi_m}{\lambda^m} 
$$ converges in ${\mathcal S}({\mathbb R})$ for every $f\in {\mathcal S}({\mathbb R})$ and $\lambda\in {\mathbb C}$ with $|\lambda| > 1$. Obviously $|x|\leq 1 + \varphi_m(x)$ for all $x\in {\mathbb R}$ and $m\in {\mathbb N}.$ By Lemmas \ref{lem:quadratic1} and \ref{lem:quadratic2}, $\varphi$ satisfies the hypothesis in Lemma \ref{lem:convergencia_serie} and we conclude.
\end{proof}

Summarizing, we get the following.

\begin{theorem}\label{grau2}{\rm Let $\varphi(x) = a_0 + a_1 x + a_2 x^2$ be a quadratic polynomial with real coefficients and take $c = a_0a_2 + \frac{a_1}{2} - \frac{a_1^2}{4}.$
\begin{itemize}
 \item[(a)] $c > \frac{1}{4}$ implies $\sigma\left(C_\varphi\right) = \left\{0\right\}.$
\item[(b)] $c = \frac{1}{4}$ implies $\sigma\left(C_\varphi\right) = \overline{{\mathbb D}}.$
\item[(c)] $c < \frac{1}{4}$ implies $\sigma\left(C_\varphi\right) = {\mathbb C}.$
\end{itemize}
}
\end{theorem}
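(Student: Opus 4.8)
The plan is to reduce everything to the normal form and then read the three cases off results already established. As observed just before the statement, the substitution $\ell(x) = a_2 x + \frac{a_1}{2}$ shows that $\varphi$ is linearly equivalent to $\psi(x) = x^2 + c$, so that $\sigma(C_\varphi) = \sigma(C_\psi)$ and it suffices to compute $\sigma(C_\psi)$. The real fixed points of $\psi$ are the solutions of $x^2 - x + c = 0$, whose discriminant is $1 - 4c$; this is exactly what separates the analysis into the three regimes $c > \frac{1}{4}$, $c = \frac{1}{4}$ and $c < \frac{1}{4}$.

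For part (a), when $c > \frac{1}{4}$ the equation $x^2 - x + c = 0$ has no real root, so $\psi$ has no fixed points; since $\psi$ has even degree, Theorem \ref{th:withoutfixedpoints} applies directly and gives $\sigma(C_\psi) = \{0\}$. Part (b) is precisely the case $\psi(x) = x^2 + \frac{1}{4}$ settled in the theorem immediately above, which yields $\sigma(C_\psi) = \overline{{\mathbb D}}$, so nothing further is required there.

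For part (c), when $c < \frac{1}{4}$ the polynomial $\psi$ has two distinct real fixed points $\frac{1 \pm \sqrt{1-4c}}{2}$. I would take $a = \frac{1 + \sqrt{1-4c}}{2}$, the larger one, and check the hypotheses of Theorem \ref{th:simplefixedpoint}: from $\psi'(x) = 2x$ we get $\psi'(a) = 1 + \sqrt{1-4c} > 1$, while $\psi''(x) \equiv 2 > 0$ and $\psi^{(n)} \equiv 0$ for $n \geq 3$, so every higher derivative at $a$ is nonnegative. Theorem \ref{th:simplefixedpoint} (equivalently Example \ref{ex:simplefixedpoint}, since the leading coefficient is positive and $a$ is a simple real fixed point dominating the other) then gives ${\mathbb C}\setminus\{0\} \subset \sigma(C_\psi)$. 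Finally $0 \in \sigma(C_\psi)$, either because $\psi'(0) = 0$ (Proposition \ref{prop:elementary}(c)) or directly because the range of $C_\psi$ consists of even functions; hence $\sigma(C_\psi) = {\mathbb C}$.

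There is no genuine obstacle here: the statement is a bookkeeping summary assembling Theorem \ref{th:withoutfixedpoints}, Theorem \ref{th:simplefixedpoint} and the $x^2 + \frac{1}{4}$ computation, and the linear-equivalence invariance of the spectrum transfers each conclusion from $\psi$ back to $\varphi$. The only point that calls for an explicit line of verification is the derivative hypothesis in part (c), which is immediate for the monomial-plus-constant normal form.
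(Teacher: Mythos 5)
Your proof is correct and follows essentially the same route as the paper: reduction to the normal form $\psi(x) = x^2 + c$ by linear equivalence, then Theorem \ref{th:withoutfixedpoints} for $c > \frac{1}{4}$, the theorem on $x^2 + \frac{1}{4}$ for $c = \frac{1}{4}$, and Theorem \ref{th:simplefixedpoint} plus the observation that $0 \in \sigma(C_\psi)$ (range of even functions, or $\psi'(0)=0$) for $c < \frac{1}{4}$. Your explicit check of the derivative hypotheses at the larger fixed point is the only step the paper leaves implicit.
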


\section{Cubic polynomials}

Let us now consider polynomials $\varphi$ of degree $3$ with $\displaystyle\lim_{x\to +\infty}\varphi(x) = +\infty.$

\begin{theorem}\label{th:cubic-positive}{\rm Let $\varphi$ be a polynomial of degree $3$ with positive leading coefficient. Then ${\mathbb C}\setminus \{0\}\subset \sigma(C_\varphi)$ unless $\varphi$ has a fixed point of multiplicity $3$ in which case $\sigma(C_\varphi) = \overline{{\mathbb D}}\setminus\{0\}.$
}
\end{theorem}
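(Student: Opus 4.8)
The plan is to split the argument according to the multiplicity structure of the fixed-point polynomial $\varphi(x)-x$, which is a cubic with positive leading coefficient and hence has either three distinct real roots, a simple real root together with a conjugate complex pair, a double and a simple real root, or a single triple real root. Since an odd-degree polynomial always has a real fixed point, Theorem~\ref{th:polynomial_fixed_point} already gives $\overline{\mathbb D}\setminus\{0\}\subset\sigma(C_\varphi)$ in every case, so the whole problem reduces to deciding which $\lambda$ with $|\lambda|>1$, and whether $0$, belong to the spectrum.

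First I would dispose of every case except the triple root. In each of those the polynomial $\varphi(x)-x$ has at least one simple real root, and this root either carries the largest real part among all fixed points or the smallest: once a conjugate pair or a double root is present there are only two distinct real-part values, while in the three-distinct-real case one simply takes the largest root, which is simple. Recall that a linear equivalence $\psi=\ell^{-1}\circ\varphi\circ\ell$ with $\ell(x)=\gamma x+\delta$ preserves the spectrum and, the degree being odd, keeps the leading coefficient positive for every $\gamma\neq0$ (it is multiplied by $\gamma^2$); moreover $\gamma<0$ reverses the ordering of the real parts of the fixed points. Hence, composing with a suitable $\ell$ (choosing $\gamma<0$ exactly when the simple root is the smallest), I can always arrange that the simple real fixed point has the largest real part among all fixed points of $\psi$. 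Then $\psi$ satisfies the hypotheses of Example~\ref{ex:simplefixedpoint}, so ${\mathbb C}\setminus\{0\}\subset\sigma(C_\psi)=\sigma(C_\varphi)$, which is the first alternative.

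It remains to treat the triple fixed point, where $\varphi(x)-x=c(x-a)^3$ with $c>0$. Here $\varphi'(x)=1+3c(x-a)^2\geq1$ never vanishes, so Proposition~\ref{prop:elementary}(c) yields $0\notin\sigma(C_\varphi)$, and only the bijectivity of $C_\varphi-\lambda I$ for $|\lambda|>1$ remains. Using $\ell(x)=a+c^{-1/2}x$ I would reduce to the normalized symbol $\varphi(x)=x+x^3$, which is odd and satisfies $|\varphi_m(x)|\geq|x|$ and $|\varphi_m(x)|\to\infty$ for every $x\neq0$. Injectivity is immediate: from $C_\varphi f=\lambda f$ one gets $|f(\varphi_m(x))|=|\lambda|^m|f(x)|$, whose left-hand side is bounded while the right-hand side explodes unless $f(x)=0$. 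For surjectivity I would solve $C_\varphi f-\lambda f=g$ through the series $f=-\sum_{m\geq0}\lambda^{-(m+1)}\,g\circ\varphi_m$ and establish its convergence in ${\mathcal S}({\mathbb R})$ by Lemma~\ref{lem:convergencia_serie} with $\mu=1/\lambda$, $|\mu|<1$; condition (ii) holds with $q=1$ since $|x|\leq|\varphi_m(x)|$, so everything hinges on the derivative bounds of condition (i).

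The main obstacle is precisely condition (i), i.e.\ the cubic analogues of Lemmas~\ref{lem:quadratic1} and~\ref{lem:quadratic2}: for every $r>1$ and $n\in{\mathbb N}$ one needs $C>0$ and $p\in{\mathbb N}$ with $|\varphi_m^{(n)}(x)|\leq Cr^m(1+|\varphi_m(x)|)^p$. By oddness it suffices to treat $x\geq0$, starting from the factorization $\varphi_m'(x)=\prod_{j=0}^{m-1}\bigl(1+3\varphi_j(x)^2\bigr)$. The key expanding estimate is $\varphi_m'(x)\leq\varphi_m(x)^2$ for $x\geq1$, proved by induction from the elementary inequality $1+3t^2\leq(1+t^2)^2$ valid for $t\geq1$, so that $(1+3\varphi_m^2)\varphi_m^2\leq\varphi_m^2(1+\varphi_m^2)^2=\varphi_{m+1}^2$. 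For $0\leq x<1$ one mimics part (iv) of Lemma~\ref{lem:quadratic1}: with the $r$-dependent threshold $T_r=\sqrt{(r-1)/3}$ every factor having $\varphi_j<T_r$ is below $r$ and contributes at most $r^m$ in total, the finitely many factors with $T_r\leq\varphi_j<1$ contribute only a constant depending on $r$ (their number is at most $1/T_r^3$, since each step increases the orbit by at least $T_r^3$, independently of the starting point and of $m$), and the remaining tail is controlled by the expanding estimate applied from the first iterate reaching $1$. The higher-order bounds then follow by induction on $n$ via Fa\`a di Bruno's formula exactly as in Lemma~\ref{lem:quadratic2}. Once (i) is secured, Lemma~\ref{lem:convergencia_serie} delivers the convergence of the series, hence surjectivity, completing the proof that $\sigma(C_\varphi)=\overline{\mathbb D}\setminus\{0\}$ in the triple-root case.
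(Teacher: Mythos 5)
Your proposal is correct and follows essentially the same route as the paper's own proof: the identical four-way split according to the fixed points, Theorem \ref{th:polynomial_fixed_point} for $\overline{{\mathbb D}}\setminus\{0\}$, Theorem \ref{th:simplefixedpoint} via Example \ref{ex:simplefixedpoint} (after a reflection when the simple fixed point is the smallest) in the non-triple cases, reduction of the triple-root case to $x+x^3$ with $0\notin\sigma(C_\varphi)$ from Proposition \ref{prop:elementary}(c), and bijectivity of $C_\varphi-\lambda I$ for $|\lambda|>1$ obtained from the boundedness argument plus Lemma \ref{lem:convergencia_serie}, whose condition (i) you verify through the same expanding estimate $\varphi_m'(x)\leq\varphi_m(x)^2$ (from $1+3t^2\leq(1+t^2)^2$, $t\geq1$) and the same Fa\`a di Bruno induction for higher derivatives. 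The only deviation is cosmetic: your count of the intermediate factors with $T_r\leq\varphi_j(x)<1$ (at most a constant number, since each step advances the orbit by at least $T_r^3$) replaces the paper's two-stage argument with the threshold $K$ and the index $n_x$, but the decomposition and all key lemmas coincide.
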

\begin{proof}
According to its fixed points the following cases can occur:
\begin{itemize}
\item[(i)] $\varphi$ has three different real fixed points,
\item[(ii)] $\varphi$ has two different real fixed points, one with multiplicity two and the other is simple,
\item[(iii)] $\varphi$ has only  one real fixed point, the other two being complex conjugate numbers,
\item[(iv)] $\varphi$ has one real fixed point of multiplicity 3.
\end{itemize}

In the first three cases, using that $\varphi$ is linearly equivalent to $\psi(x) = -\varphi(-x)$ if necessary, we may apply Theorem \ref{th:simplefixedpoint} (see Example \ref{ex:simplefixedpoint}) to conclude that ${\mathbb C}\setminus\{0\}\subset \sigma(C_\varphi).$ In the forth case, $\varphi$ is linearly equivalent to $\psi(x) = x + x^3.$
\par\medskip
So, to complete the proof we discuss the spectrum of $\varphi(x) = x + x^3.$ From Theorem \ref{th:polynomial_fixed_point} and Proposition \ref{prop:elementary} we already know that $\sigma(C_\varphi)\supset\overline{{\mathbb D}}\setminus\{0\}$ and that $0\notin \sigma(C_\varphi).$

We will show that given  $ n\geq 1$ and $R>1$ there are $C>0$ and $q\in {\mathbb R}$  such that 
\begin{equation}\label{eq:cota-derivada-iterada}
|\varphi_{m}^{(n)}(x)|\leq C R^m(1+\varphi_{m}(x))^q
\end{equation} for each $x\in {\mathbb R}$ and each $m\in {\mathbb N}.$ Since $\varphi$ is an odd function, it suffices to consider $x\geq 0.$ First, we check the inequality (\ref{eq:cota-derivada-iterada}) for the first derivative. Observe that $\varphi_{m+1}'(x)=\varphi_{m}'(x)\varphi'(\varphi_{m}(x)).$ Then, $$\varphi_{m}'(x)=\prod_{k=0}^{m-1}\varphi'(\varphi_{k}(x)).$$

 For $x=0$ we have the inequality with $q=C=1.$ We will proceed by induction on $m.$ For $m=1$ we clearly have the inequality for some $C>0$ and $q=1.$
Also, we have $\varphi'(x)=1+3x^2\geq 1.$ We take $x_0>0$ such that for $x\geq x_0,$ we have $\varphi'(x)<\varphi(x).$ Observe that this implies $x_0>1$ therefore $1+3x^2<(1+x^2)^2.$ As $\varphi(x)>x$ for $x>0,$ we also have 
$$
1+3\varphi_m(x)^2 < \left(1+\varphi_m(x)^2\right)^2$$ for $x\geq x_0.$ Hence, if we assume that for $x\geq x_0$ we have $\varphi_m'(x)\leq (\varphi_m(x))^2,$ we obtain $$\frac{\varphi_{m+1}'(x)}{(\varphi_{m+1}(x))^2}=\frac{\varphi_m'(x)}{(\varphi_m(x))^2}\cdot\frac{1+3\varphi_m(x)^2}{(1+\varphi_m(x)^2)^2}\leq 1.$$ Consequently
$$
\varphi_m'(x) \leq (\varphi_m(x))^2 \ \ \forall m\in{\mathbb N},\ x\geq x_0.
$$
\par\medskip
We now {\it claim} that for each $K>0$ there is $q\in {\mathbb N}$ such that $$\varphi_m'(x)\leq (1 + \varphi_m(x))^q,$$ whenever $x>K.$ In fact, for each $K > 0$ we take $\ell\in {\mathbb N}$ such that $\varphi_\ell(K) > x_0.$ Then, for $x\geq K$ and $m > \ell$ we have
$$
\begin{array}{*2{>{\displaystyle}l}}
 \varphi_m'(x) & = \varphi_{m-\ell}'\left(\varphi_\ell(x)\right)\cdot \varphi_\ell'(x) \\ & \\ & \leq \left(\varphi_{m-\ell}\left(\varphi_\ell(x)\right)\right)^2\cdot \prod_{j=0}^{\ell -1}\left(1 + 3\varphi_j(x)^2\right) \\ & \\ & \leq 3^\ell \left(\varphi_m(x)\right)^2\cdot \left(1 + \varphi_m(x)\right)^{2\ell}.
\end{array}
$$ We take $p\in {\mathbb N}$ such that $3^\ell \leq\left(1 + \varphi(K)\right)^p$ and put $q = 2\ell + p + 2.$ Then 
$$
\varphi_m'(x)\leq (1 + \varphi_m(x))^q$$ for all $x>K.$ The claim is proved. To finish the proof of (\ref{eq:cota-derivada-iterada}) for $n = 1$ we fix $R > 0$ and take $K > 0$ such that $x > 0$ and $\varphi'(x) > R$ imply $x > K.$ For any $x > 0$ let $n_x\in {\mathbb N}$ be the first $n\in {\mathbb N}$ with the property that $\varphi_n(x) > K.$ Then $m \leq n_x$ implies 
$$
\varphi_m'(x) = \prod_{j=0}^{m-1}\varphi'\left(\varphi_j(x)\right) \leq R^m,
$$ while for $m > n_x$ we have
$$
\begin{array}{*2{>{\displaystyle}l}}
 \varphi_m'(x) & = \prod_{j=0}^{n_x-1}\varphi'\left(\varphi_j(x)\right)\cdot \prod_{j=n_x}^{m-1}\varphi'\left(\varphi_j(x)\right)\\ & \\ & \leq R^{n_x} \prod_{j=0}^{m-1-n_x}\varphi'\left(\varphi_j(\varphi_{n_x}(x))\right) \\ & \\ & = R^{n_x} \varphi_{m-n_x}'\left(\varphi_{n_x}(x)\right) \leq R^m \left(1 + \varphi_m(x)\right)^q.
\end{array}
$$

For the second derivative we have, $$\varphi_m''(x)=\sum_{k=0}^{m-1}6\varphi_k(x)^2 \varphi'(x) \prod_{j\neq k}\varphi'(\varphi_j(x))=\varphi_m'(x)\sum_k\varphi_k'(x)\frac{6\varphi_k(x)^2}{1+3\varphi_k(x)^2}.$$ From here, we argue as in the case of $p(x)=x^2+\frac{1}{4}$ to conclude.
\end{proof}

\par\medskip
We observe that each polynomial $\varphi$ of degree $3$ is linearly equivalent to some polynomial of the form
$$
\psi(x) = \pm\ x^3 + Ax + B.
$$

For $\varphi(x) = x^3 + Ax + B$ the spectrum of $C_\varphi$ is already discussed in Theorem \ref{th:cubic-positive}. Next we include some partial results concerning the spectrum of $C_\varphi$  for $\varphi(x) = -x^3 + Ax + B.$ In the special case that $B = 0$ we have a complete characterization.

\begin{proposition}\label{prop:negative-odd}{\rm Let $\eta(x) = -x^3 + Ax$ be given. Then
\begin{itemize}
\item[(a)] $A = -1$ implies $\sigma(C_\eta) = \overline{{\mathbb D}}\setminus\{0\}.$
\item[(b)] $A < 0, A \neq -1,$ implies $\sigma(C_\eta) = {\mathbb C}\setminus \{0\}.$
\item[(c)] $A \geq 0$ implies $\sigma(C_\eta) = {\mathbb C}.$
\end{itemize}
}
\end{proposition}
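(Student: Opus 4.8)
The plan is to split off the origin first and then locate everything outside it. Since $\eta'(x)=-3x^2+A$ vanishes on ${\mathbb R}$ exactly when $A\ge 0$, Proposition~\ref{prop:elementary}(c) gives $0\in\sigma(C_\eta)$ precisely in case (c) and $0\notin\sigma(C_\eta)$ in cases (a),(b). Moreover $0$ is always a fixed point of $\eta$, so Theorem~\ref{th:polynomial_fixed_point} yields $\overline{{\mathbb D}}\setminus\{0\}\subset\sigma(C_\eta)$ throughout. Thus (a) reduces to showing that no $\lambda$ with $|\lambda|>1$ lies in $\sigma(C_\eta)$, while (b) and (c) both reduce to the single containment ${\mathbb C}\setminus\{0\}\subset\sigma(C_\eta)$, which I would get from Proposition~\ref{rem:composition} by verifying that $\varphi=\eta\circ\eta$ satisfies the hypotheses of Theorem~\ref{th:simplefixedpoint}.

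For $A\neq -1$ I would work with $\varphi=\eta_2$, a monic degree-$9$ polynomial with $\lim_{x\to+\infty}\varphi(x)=+\infty$. Its fixed points are the roots of
$$
h(x):=\varphi(x)-x=x\,\bigl(x^2-(A+1)\bigr)\bigl(x^2+(1-A)\bigr)\bigl(x^4-Ax^2+1\bigr),
$$
the last three factors corresponding to the $2$-cycle $\{\pm\sqrt{A+1}\}$ of $\eta$, the fixed points $\pm\sqrt{A-1}$ of $\eta$, and the genuinely complex $2$-cycles solving $\eta(x)=1/x$. Inspecting this factorization, the largest real fixed point is $a=\sqrt{A+1}$ when $A>-1$ and $a=0$ when $A<-1$. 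Expanding at $x=a+t$ and using $a^2=A+1$ one gets $x^2-(A+1)=t(2a+t)$ and $x^2+(1-A)=2+2at+t^2$, while the quartic factor becomes $Q(t)=(A+2)+2a(A+2)\,t+(5A+6)\,t^2+4a\,t^3+t^4$, whose coefficients are all $\ge 0$ for $A>-1$. (For $A<-1$ one expands at $a=0$, where $h(t)=t\,(t^2+|A+1|)(t^2+(1-A))(t^4+|A|\,t^2+1)$ visibly has nonnegative coefficients.) Hence $\varphi^{(n)}(a)=h^{(n)}(a)\ge 0$ for $n\ge 2$; combined with $\varphi'(a)=(2A+3)^2>1$ for $A>-1$ and $\varphi'(0)=A^2>1$ for $A<-1$, Theorem~\ref{th:simplefixedpoint} applies to $\varphi$, and Proposition~\ref{rem:composition} then gives ${\mathbb C}\setminus\{0\}\subset\sigma(C_\eta)$. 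With the first paragraph this yields $\sigma(C_\eta)={\mathbb C}\setminus\{0\}$ in (b) and $\sigma(C_\eta)={\mathbb C}$ in (c).

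I expect the sign computation to be the main obstacle, precisely because the convenient criterion of Example~\ref{ex:simplefixedpoint} fails in part of the range: for $-1<A<-\tfrac23$ the four roots of $x^4-Ax^2+1$ are complex, lie on the unit circle, and have real parts $\pm\tfrac12\sqrt{A+2}$, the larger of which exceeds $a=\sqrt{A+1}$. So one cannot simply invoke Example~\ref{ex:simplefixedpoint} and must check nonnegativity of the coefficients of $Q(t)$ directly, the offending conjugate pair being compensated inside the quartic by its mirror pair with real part $-\tfrac12\sqrt{A+2}$. A secondary point, relevant only when verifying the internal mechanism of Proposition~\ref{rem:composition}, is that for $A>-1$ the point $a=\sqrt{A+1}$ is a $2$-cycle point rather than a fixed point of $\eta$, so the inequality $\eta(x)<a$ on $(a,a+\delta)$ that its proof needs is not furnished by Lemma~\ref{lem:negativederivative}; here it follows instead from $\eta(a)=-a<a$ and continuity, whereas for $A<-1$ the point $a=0$ is a genuine fixed point of $\eta$ with $\eta'(0)=A<-1$ and Lemma~\ref{lem:negativederivative} applies as stated.

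Finally, case (a) with $A=-1$ sits exactly at the boundary $\varphi'(0)=A^2=1$, where the previous argument fails; I claim $\sigma(C_\eta)=\overline{{\mathbb D}}\setminus\{0\}$. Given $\overline{{\mathbb D}}\setminus\{0\}\subset\sigma(C_\eta)$ and $0\notin\sigma(C_\eta)$, it remains to invert $C_\eta-\lambda I$ for $|\lambda|>1$. Injectivity is immediate, since $C_\eta f=\lambda f$ forces $f(\eta_n(x))=\lambda^n f(x)$ with bounded left-hand side, whence $f\equiv 0$. For surjectivity I would set $f=-\sum_{k\ge 0}\lambda^{-(k+1)}\,g\circ\eta_k$ and prove convergence in ${\mathcal S}({\mathbb R})$ via Lemma~\ref{lem:convergencia_serie} with $\mu=1/\lambda$. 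The key observation is that, writing $\phi(x)=x+x^3$, one has $\eta=-\phi$ and hence $\eta_n=(-1)^n\phi_n$, so that $|\eta_n^{(k)}(x)|=|\phi_n^{(k)}(x)|$ and $|\eta_m(x)|=|\phi_m(x)|\ge|x|$; thus conditions (i)--(ii) of Lemma~\ref{lem:convergencia_serie} for $\eta$ are exactly the iterate estimates~(\ref{eq:cota-derivada-iterada}) already established for $\phi$ in Theorem~\ref{th:cubic-positive}. This produces the inverse and closes case (a).
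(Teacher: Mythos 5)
Your proposal is correct, and for (b) and (c) it has the same skeleton as the paper: both arguments reduce to Theorem~\ref{th:simplefixedpoint} plus Proposition~\ref{rem:composition} applied to $\varphi=\eta\circ\eta$ at exactly the same points, $a=\sqrt{A+1}$ for $A>-1$ and $a=0$ for $A<-1$, and both patch the same subtlety in Proposition~\ref{rem:composition} (its proof invokes Lemma~\ref{lem:negativederivative}, which presupposes that $a$ is a fixed point of $\eta$; the paper substitutes the observation $\eta(x)<0<a$ for all $x\geq a$, you substitute $\eta(a)=-a<a$ plus continuity — both are fine). The difference in (b),(c) is how the hypotheses $\varphi'(a)>1$, $\varphi^{(n)}(a)\geq 0$ are verified: you factor $\eta_2(x)-x$ explicitly and check nonnegativity of all shifted coefficients, and your warning that Example~\ref{ex:simplefixedpoint} cannot be invoked for $-1<A<-\tfrac23$ (complex period-two points with real part $\tfrac12\sqrt{A+2}>\sqrt{A+1}$) is accurate; the paper avoids this entire computation by exploiting oddness, writing $\eta_2=\omega_2$ with $\omega(x)=-\eta(x)=x^3-Ax$ and checking the (immediate) sign conditions for the cubic $\omega$ at its fixed point, which pass to $\omega\circ\omega$ by Fa\`a di Bruno, so the complex fixed points of $\eta_2$ never need to be located. (A cosmetic slip on your side: for $A\geq 2$ the roots of $x^4-Ax^2+1$ are real, not complex, though $\sqrt{A+1}$ is still the largest fixed point and your coefficient check is unaffected.) For (a) your route is genuinely different: the paper again uses $\eta_2=\omega_2$, now with $\omega(x)=x^3+x$, and combines Theorem~\ref{th:cubic-positive} with the spectral mapping relation $\sigma(C_{\varphi_N})=\{\lambda^N:\lambda\in\sigma(C_\varphi)\}$ to get $\sigma(C_{\eta_2})=\overline{{\mathbb D}}\setminus\{0\}$ and hence $\sigma(C_\eta)\subset\overline{{\mathbb D}}\setminus\{0\}$; you instead invert $C_\eta-\lambda I$ for $|\lambda|>1$ directly, feeding the estimate~(\ref{eq:cota-derivada-iterada}) for $\phi(x)=x+x^3$ into Lemma~\ref{lem:convergencia_serie} through the identity $\eta_n=(-1)^n\phi_n$. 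Both are valid (your transfer of conditions (i)--(ii) is legitimate since $|\eta_m^{(k)}|=|\phi_m^{(k)}|$ and $|\eta_m|=|\phi_m|\geq|x|$): the paper's version is shorter and uses Theorem~\ref{th:cubic-positive} purely as a black box, while yours produces the resolvent as an explicit series — which incidentally also yields the holomorphy of the resolvent off $\overline{{\mathbb D}}$ discussed in the paper's closing remark of that section — at the price of citing an inequality from inside the proof of Theorem~\ref{th:cubic-positive} rather than its statement.
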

\begin{proof} Since $\eta$ has fixed points we can apply Theorem \ref{th:polynomial_fixed_point} to conclude that $\overline{{\mathbb D}}\setminus\{0\} \subset \sigma(C_\eta).$ From the fact that $\eta$ is an odd function we have $\eta_2 = \omega_2$ where $\omega(x) = -\eta(x) = x^3 - Ax.$
\par\medskip
(a) In the case $A = -1$ we have $\omega(x) = x^3 + x$ and
$$
\sigma(C_{\eta_2}) = \sigma(C_{\omega_2}) = \overline{{\mathbb D}}\setminus\{0\}.
$$ The last identity follows from Theorem \ref{th:cubic-positive} and spectral theorem. Consequently $\sigma(C_\eta)\subset \overline{{\mathbb D}}\setminus\{0\}.$
\par\medskip
By Proposition \ref{prop:elementary}, in order to show (b) and (c) it suffices to check that ${\mathbb C}\setminus \{0\} \subset\sigma(C_\eta)$ for $A\neq -1.$ We first consider the case $A > -1.$ Then $\omega$ admits a fixed point $a > 0$ such that $\omega'(a) > 1$ and $\omega^{(n)}(a) \geq 0.$ Hence, $\varphi = \eta\circ\eta = \omega\circ\omega$ satisfies the hypothesis of Theorem \ref{th:simplefixedpoint}. Since $\eta(x) < 0 < a$ for all $x\geq a$ we can apply Proposition \ref{rem:composition} to conclude ${\mathbb C}\setminus \{0\} \subset \sigma(C_\eta).$
\par\medskip
In the case $A < -1$ the fixed point $a = 0$ satisfies $\omega'(a) > 1$ and $\omega^{(n)}(a) \geq 0.$ Hence we can proceed as before.
\end{proof}

For polynomials $\varphi(x)=-x^3+Ax+B$ with $B\neq 0$ we can provide some examples.

\begin{proposition}{\rm Let $\psi(x) = x^3 + Ax + B$ be given with $B \neq 0$ and consider $\varphi(x) = -x^3 - Ax - B.$ We assume the $\psi$ has three different (real) fixed points. Then
$$
{\mathbb C}\setminus \{0\} \subset \sigma\left(C_\varphi\right).
$$
}
\end{proposition}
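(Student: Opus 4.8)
The plan is to deduce this from Proposition~\ref{rem:composition}. Set $\eta:=\varphi=-x^3-Ax-B=-\psi$; it has odd degree and negative leading coefficient, so it has a real fixed point and Theorem~\ref{th:polynomial_fixed_point} already gives $\overline{\mathbb D}\setminus\{0\}\subset\sigma(C_\eta)$. By Proposition~\ref{rem:composition} it then suffices to check that the second iterate $\eta_2=\eta\circ\eta$ satisfies the hypotheses of Theorem~\ref{th:simplefixedpoint}: that $\eta_2$, a polynomial of degree $9$ with positive leading coefficient, has a fixed point $a$ with $\eta_2'(a)>1$ and $\eta_2^{(n)}(a)\ge 0$ for all $n\ge 2$. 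The point $a$ will be the largest real fixed point of $\eta_2$; since it is either a fixed point of $\eta$ or a point of a $2$-cycle with $\eta(a)<a$, the map $\eta$ sends a right neighbourhood of $a$ below $a$, exactly the configuration under which the argument of Proposition~\ref{rem:composition} reduces the problem to the even iterates $\eta_{2j}=\varphi_j$.

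To produce the derivative inequalities at once I would appeal to Example~\ref{ex:simplefixedpoint} applied to $\eta_2$: it is enough to show that, among the nine fixed points of $\eta_2$ (the roots of $\eta_2(x)-x$), the one of largest real part is real and simple. Once such an $a$ is isolated, writing $\eta_2(x)-x=(x-a)\prod_k(x-z_k)$ with $\operatorname{Re}z_k\le a$ gives $\eta_2'(a)>1$ directly, and through the Gauss--Lucas theorem together with positivity of the leading coefficient it gives $\eta_2^{(n)}(a)\ge 0$ for every $n\ge 2$; this is precisely the mechanism behind Example~\ref{ex:simplefixedpoint}.

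This is where the hypothesis on $\psi$ is meant to be used. From $\eta=-\psi$ one gets $\eta_2=\widetilde\psi\circ\psi$ with $\widetilde\psi(x)=-\psi(-x)=x^3+Ax-B$, whose own fixed points are $-r_1,-r_2,-r_3$ when $\psi(t)-t=(t-r_1)(t-r_2)(t-r_3)$. The fixed points of $\eta_2$ split into the fixed points of $\eta$ and the genuine $2$-cycles of $\eta$; a short computation shows that a $2$-cycle $\{u,v\}$ satisfies $\psi(u)=-v$ and $\psi(v)=-u$, hence $\big(\psi(u)-\psi(v)\big)/(u-v)=1$ and $u,v$ are two roots of the depressed cubic $\psi(t)-t+(u+v)=0$. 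The assumption of three distinct real fixed points makes $\psi$ genuinely $S$-shaped and repelling at $r_1$ and $r_3$, and I would use this together with a sign analysis of $\eta_2(x)-x$ on $\mathbb R$ to locate a distinguished real $2$-cycle of $\eta$ lying to the right of every other fixed point of $\eta_2$ and crossing transversally.

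The main obstacle is exactly the maximality claim: proving that no complex fixed point of the degree-$9$ polynomial $\eta_2$ has real part larger than the rightmost real fixed point $a$. This is false for general degree-$9$ polynomials, so the three-distinct-real-roots hypothesis on $\psi$ must be invoked in an essential way, presumably by exploiting the factorization of $\psi(t)-t$ and the resulting control of $\widetilde\psi\circ\psi$. Everything downstream --- the derivative bounds, the reduction to even iterates, and the final unboundedness of $f^{(n)}$ contradiction --- is then inherited verbatim from Theorem~\ref{th:simplefixedpoint} and Proposition~\ref{rem:composition}.
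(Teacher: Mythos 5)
Your high-level skeleton coincides with the paper's: deduce the statement from Proposition~\ref{rem:composition} by checking that the second iterate $\varphi_2=\varphi\circ\varphi$ satisfies the hypotheses of Theorem~\ref{th:simplefixedpoint}. But that verification \emph{is} the proof, and your proposal leaves it open. You route it through Example~\ref{ex:simplefixedpoint}, which forces you to show that among the nine complex fixed points of the degree-$9$ polynomial $\varphi_2$ the one of largest real part is real and simple, and you yourself flag this ``maximality claim'' as an obstacle you cannot resolve. That is a genuine gap at the decisive step. It is also a self-inflicted one: Theorem~\ref{th:simplefixedpoint} only requires \emph{some} real fixed point $a$ with $\varphi_2'(a)>1$ and $\varphi_2^{(n)}(a)\ge 0$ for $n\ge 2$; nothing obliges you to control the complex fixed points of $\varphi_2$ at all, and the Gauss--Lucas route is strictly harder than what is needed.

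The paper's verification sidesteps your obstacle with one identity you did not exploit. Assume first $B>0$. Since $-\psi(-x)=\psi(x)-2B$, one gets $\varphi_2=\psi_2-2B$, hence $\varphi_2'=\psi_2'$ and $\varphi_2^{(n)}=\psi_2^{(n)}$ for all $n\ge 2$. Let $\alpha$ be the largest fixed point of $\psi$: the hypothesis of three distinct real fixed points gives $\psi'(\alpha)>1$, and $\alpha>0$ because the roots of $\psi(x)-x$ sum to zero; thus $\psi^{(n)}(\alpha)\ge 0$ for $n\ge2$, so $\psi$, and then $\psi_2$, satisfies the hypotheses of Theorem~\ref{th:simplefixedpoint} at $\alpha$, and all derivatives of $\psi_2$ are nonnegative and increasing on $[\alpha,+\infty)$. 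Since $B>0$, the equation $\varphi_2(x)=x$, i.e.\ $\psi_2(x)=x+2B$, has a solution $\beta>\alpha$ by the intermediate value theorem, and at this point everything comes for free: $\varphi_2'(\beta)=\psi_2'(\beta)\ge\psi_2'(\alpha)>1$ and $\varphi_2^{(n)}(\beta)=\psi_2^{(n)}(\beta)\ge 0$ for $n\ge 2$; then Proposition~\ref{rem:composition} concludes, exactly as you intended. The case $B<0$ is reduced to $B>0$ by passing to the linearly equivalent symbol $\tilde\varphi(x)=-\varphi(-x)$, whose associated cubic is $\tilde\psi(x)=x^3+Ax-B$; note that your proposal never addresses the sign of $B$, which matters here, since the intermediate value argument producing a fixed point of $\varphi_2$ to the \emph{right} of $\alpha$ uses $B>0$. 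Your observations about $2$-cycles of $\eta$ are correct but end up unused in any complete argument.
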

\begin{proof}
 We first assume that $B > 0.$ We have $\varphi_2(x) = \psi_2(x) - 2B$ for every $x\in {\mathbb R}.$ Let $x = \alpha$ be the greatest fixed point of $\psi.$ Then $\psi'(\alpha) > 1$ and $\psi''(x) \geq 0$ for every $x\geq \alpha.$ Then $x = \alpha$ is a fixed point of $\psi_2$ and it satisfies $\psi_2'(\alpha) > 1$ and $\psi_2^{(n)}(x)\geq 0$ for every $x\geq \alpha.$ Since $B > 0$ then the equation
$$
\varphi_2(x) = x,\ \ \mbox{equivalently}\ \ \psi_2(x) = x + 2B
$$ admits a solution $\beta > \alpha.$ Then
$$
\varphi_2'(\beta) = \psi_2'(\beta) \geq \psi_2'(\alpha) > 1,
$$ while
$$
\varphi_2^{(n)}(x) = \psi_2^{(n)}(x) \geq 0 \ \ \forall x\geq \beta.
$$ Consequently
$$
{\mathbb C}\setminus \{0\} \subset \sigma\left(C_{\varphi_2}\right),
$$ from where the conclusion follows after applying Proposition \ref{rem:composition}.
\par
We now consider the case that $B < 0.$ We put $\tilde{\varphi}(x) = -\varphi(-x)$ and $\tilde{\psi}(x) = -\psi(-x).$ Then $\tilde{\psi}(x) = x^3 + Ax - B$ and $\tilde{\varphi}(x) = - \tilde{\psi}(x).$ Since also $\tilde{\psi}$ admits three different fixed points and $\tilde{\psi}(0) > 0$ we conclude
$$
{\mathbb C}\setminus \{0\} \subset \sigma\left(C_{\tilde{\varphi}}\right) = \sigma\left(C_\varphi\right).
$$
\end{proof}

\begin{remark}{\rm Let $\psi(x) = x^3 + Ax + B$ be given with $B > 0.$ If $\psi$ has a unique real fixed point then
$$
\psi(x) = x + \left((x-\alpha)^2 + \beta^2\right)(x-c),
$$ where $c < 0 < \alpha.$
}
\end{remark}

This means that we cannot adapt the previous argument to the case that there is a unique fixed point. Nor can we adapt the argument in the case where there is a simple fixed point and a double fixed point (the condition $B > 0$ forces that the double fixed point is the greatest).

\begin{remark} {\em If $X$ is a locally convex space and $T\in L(X)$, the Waellbrock spectrum $\sigma^\ast(T)$ is defined as the smallest set containing $\sigma(T)$ such that the resolvent mapping $R(\cdot,T): \ \mathbb{C}\setminus \sigma^\ast(T)\to L_b(X)$, $z\mapsto R(z,T)=(zI-T)^{-1}$ is holomorphic (see \cite{Vasilescu}). Here $L_b(X)$ stands for  the space of continuous linear operators on $X$ endowed with the topology of convergence on bounded sets. If $X$ is a Fr\'echet space and $U\subset\mathbb{C}$ is open then $F:U\to L_b(X)$  is holomorphic if and only if the map $U\to \mathbb{C}$, $z\mapsto \langle u, F(z)(x)\rangle$ is holomorphic for every $u\in X'$, $x\in X$ (see \cite[Theorem 1]{GE2004}, \cite[corollary 10, Remark 11]{BFJ2007}). Hence, from Lemma \ref{lem:convergencia_serie} we can get easily that the resolvent map $z\mapsto R(\cdot,C_{\varphi}) $ is holomorphic in $\mathbb{C}\setminus\{0\}$ when $\varphi$ does not have fixed points (Theorem \ref{th:withoutfixedpoints}), and also in $\mathbb{
C}\setminus \overline{\mathbb{D}}$ when $\varphi$ is a polynomial of degree 2 or 3 with a unique fixed point (Theorem \ref{grau2}, Theorem \ref{th:cubic-positive} and Propoposition \ref{prop:negative-odd} a)). In all cases then we have $\sigma^\ast(C_{\varphi})=\overline{\sigma(C_{\varphi})}$.  Contrary to what happens for operators in Banach spaces, where the Waellbrock spectrum equals the spectrum which is always closed, this is not always true for operators defined on Fr\'echet spaces, even when the spectrum is bounded as one can check in \cite[Remark 3.5 (vi)]{abr2013}.}

\end{remark}

\section{Monotone symbols}

We recall that the symbols for $\mathcal{S}({\mathbb R})$ were completely characterized in \cite[Theorem 2.3]{galbis_jorda}. The aim of this section is to provide some information regarding the spectrum of composition operators defined  by monotone symbols. Then, let us assume that the symbol $\varphi$ is strictly monotone and let us denote by $\psi$ its inverse and by $\psi_n$  its $n$-th iterate. For $\lambda \neq 0,$ and $f, \, g \in {\mathcal S}({\mathbb R}),$ the relation $C_\varphi f-\lambda f=g$ implies that  (\ref{eq:pr2}) holds for every $n,$  that is $$ f\left(\varphi_n(x)\right) = \lambda^{n} f(x) + \sum_{k=0}^{n-1}\lambda^{n-1-k}g\left(\varphi_k(x)\right),$$ which implies
\begin{equation}\label{eq:pr4}
\begin{array}{*2{>{\displaystyle}l}}
f(x) & = \lambda^{n} f\left(\psi_n(x)\right) + \sum_{k=0}^{n-1}\lambda^{n-1-k}g\left(\psi_{n-k}(x)\right)\\ & \\ & = \lambda^{n} f\left(\psi_n(x)\right) + \sum_{j=1}^n\lambda^{j-1}g\left(\psi_j(x)\right).
\end{array}
\end{equation}

\begin{proposition}\label{pr:increasing}{\rm Let $\varphi$ be a strictly increasing symbol other than the identity. Then $\sigma(C_\varphi)$ always contains $\{\lambda\in {\mathbb C}: |\lambda|=1\}.$
}
\end{proposition}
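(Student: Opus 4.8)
The plan is to fix $\lambda$ with $|\lambda|=1$, assume for contradiction that $\lambda\notin\sigma(C_\varphi)$, and exploit a fundamental-domain argument exactly in the spirit of the proof of Theorem \ref{th:polynomial_fixed_point}, but now on the critical circle. Since $\varphi$ is strictly increasing and $\varphi\neq\mathrm{id}$, there is a point where $\varphi(x)>x$ or one where $\varphi(x)<x$; I treat the first case, the second being obtained by interchanging the roles of the two endpoints below. Let $(p,q)$ be the maximal open interval containing such a point on which $\varphi(x)>x$, so that $p,q\in\{\text{fixed points of }\varphi\}\cup\{-\infty,+\infty\}$. On $(p,q)$ the map $\varphi$ is a strictly increasing bijection onto itself, its inverse $\psi$ is well defined there with $\psi(x)<x$, and for every $x\in(p,q)$ one has $\varphi_n(x)\to q$ and $\psi_n(x)\to p$. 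I first dispose of $\lambda=1$: if $\varphi$ has a fixed point then $1\in\sigma(C_\varphi)$ by Proposition \ref{prop:elementary}(a), and if $\varphi$ has no fixed point then $p=-\infty$, $q=+\infty$ and the escape argument below applies verbatim. So from now on $\lambda\neq 1$.

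Next I build the obstruction. Choose a seed $x_0\in(p,q)$, set $x_k=\psi_k(x_0)\downarrow p$, and fix a smooth $g$ with $\operatorname{supp}g\subset(x_1,x_0)$ and $g\equiv 1$ on a closed subinterval $J_0\subset(x_1,x_0)$. If $C_\varphi-\lambda I$ were invertible there would be $f\in\mathcal S({\mathbb R})$ with $C_\varphi f-\lambda f=g$; iterating gives (\ref{eq:pr2}), and dividing by $\lambda^n$ yields
$$
f(x)=\lambda^{-n}f(\varphi_n(x))-\sum_{k=0}^{n-1}\lambda^{-1-k}g(\varphi_k(x)),\qquad x\in(p,q).
$$
The key point is that $\lambda^{-n}f(\varphi_n(x))\to 0$: indeed, since $g$ vanishes at $p$ and $q$, evaluating the defining equation at a fixed-point endpoint gives $(1-\lambda)f(\text{endpoint})=0$, so $f$ vanishes at any finite endpoint because $\lambda\neq 1$; hence $f(\varphi_n(x))\to 0$ whether $q$ is a finite fixed point or $q=+\infty$, while $|\lambda^{-n}|=1$. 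Letting $n\to\infty$ I recover the formula (\ref{eq:pr3}), namely $f(x)=-\sum_{k\ge 0}\lambda^{-1-k}g(\varphi_k(x))$ on $(p,q)$, and since $\operatorname{supp}g$ lies in a single fundamental domain $(x_1,x_0)$ the forward orbit of any $x$ meets it at most once, so the sum has at most one nonzero term.

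Finally I evaluate along a carefully chosen backward orbit to reach a contradiction. Fixing $y_0\in J_0$ and setting $y_m=\psi_m(y_0)$, one checks as in the proof of Theorem \ref{th:polynomial_fixed_point} that $\varphi_m(y_m)=y_0\in J_0$ while $\varphi_k(y_m)\notin\operatorname{supp}g$ for $k\neq m$, so that only the index $k=m$ survives and $|f(y_m)|=|\lambda|^{-1-m}=1$ for every $m$. But $y_m\to p$, and $f(y_m)\to 0$: either $p=-\infty$ and $f\in\mathcal S({\mathbb R})$ decays at infinity, or $p$ is a finite fixed point at which $f(p)=0$ since $\lambda\neq 1$. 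This contradicts $|f(y_m)|\equiv 1$, so $\lambda\in\sigma(C_\varphi)$. The step I expect to be the main obstacle is controlling $f\circ\varphi_n$ at the attracting endpoint $q$ when it is a finite fixed point rather than $\pm\infty$; unlike the polynomial case this cannot rely on fast escape (Lemma \ref{lem:fast}), and the resolution is precisely the observation that $\lambda\neq 1$ forces $f$ to vanish at every fixed point, which is what makes the argument work for arbitrary strictly increasing symbols.
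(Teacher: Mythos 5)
Your proof is correct and follows essentially the same strategy as the paper's: a bump function $g$ supported in a fundamental domain $(\psi(x_0),x_0)$, the series representation of the would-be solution $f$ obtained by iterating $C_\varphi f-\lambda f=g$, evaluation along the backward orbit $y_m=\psi_m(y_0)$ (which meets $\operatorname{supp} g$ exactly once), and the contradiction between $|f(y_m)|\equiv 1$ and the vanishing of $f$ at the limit point --- a finite fixed point (using $\lambda\neq 1$) or $\pm\infty$ (using Schwartz decay). The only real difference is organizational: you merge the paper's four cases (no fixed points; two consecutive fixed points; a largest or smallest fixed point) into one argument on the maximal interval where $\varphi(x)>x$, always using the forward-iterate representation of $f$, whereas the paper's proof switches between the forward representation (\ref{eq:pr2}) and the backward one (\ref{eq:pr4}) according to the case.
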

\begin{proof}
Let $\lambda$ satisfies $|\lambda| = 1$ and assume that $\lambda\notin \sigma(C_\varphi).$ Then, for every $g\in {\mathcal S}({\mathbb R})$ there is a unique $f\in {\mathcal S}({\mathbb R})$ such that (\ref{eq:pr1}), (\ref{eq:pr2}) and (\ref{eq:pr4}) hold. Now we discuss the following possibilities, covering all possible cases
\begin{itemize}
\item[(1)] $\varphi$ lacks fixed points.
\item[(2)] There exist $a < b$ such that $\varphi(a) = a, \varphi(b) = b$ and $\varphi(x)\neq x$ for every $x\in \left(a,b\right).$
\item[(3)] There exists $a\in {\mathbb R}$ such that $\varphi(a) = a$ and $\varphi(x)\neq x$ for every $x\in \left(a,+\infty\right).$
\item[(4)] There exists $a\in {\mathbb R}$ such that $\varphi(a) = a$ and $\varphi(x)\neq x$ for every $x\in \left(-\infty,a\right).$
\end{itemize}
\par\medskip
(1)
Since, for every $x\in {\mathbb R},$ the sequences $\left(\varphi_n(x)\right)_n$ and $\left(\psi_n(x)\right)_n$ diverge, we obtain from (\ref{eq:pr2}) and (\ref{eq:pr4}),
$$f(x)=-\frac{1}{\lambda}\sum_{k=0}^{\infty}\lambda^{-k}g\left(\varphi_k(x)\right) = \frac{1}{\lambda}\sum_{k=1}^{\infty}\lambda^{k}g\left(\psi_k(x)\right).$$ This implies that
$$
\sum_{k=0}^\infty \lambda^{-k}g(\varphi_{k}(x)) + \sum_{k=1}^\infty \lambda^{k}g(\psi_{k}(x)) = 0
$$ for each $x\in {\mathbb R}.$  This cannot happen if $g$ is a smooth function whose support contains $0$ and is contained in the open interval determined by $\varphi(0)$ and $\psi(0).$
\par\medskip
In the case that the symbol admits some fixed point then $1\in \sigma(C_\varphi),$ so we will take in what follows $\lambda \neq 1.$
\par\medskip
(2) Either
$$
\varphi_n(x)\downarrow a,\ \ \psi_n(x)\uparrow b\ \ \forall x\in (a,b) \ \ (\mbox{if}\ \varphi(x) < x)
$$ or
$$
\varphi_n(x)\uparrow b,\ \ \psi_n(x)\downarrow a\ \ \forall x\in (a,b) \ \ (\mbox{if}\ \varphi(x) > x).
$$ We fix $x_0\in (a,b)$ and define $x_{k+1} = \varphi(x_k).$ Let $I_0$ denote the open interval with extremes $\left(x_0, x_1\right)$ and let $J_0$ be a closed interval contained in $I_0$ and $g$ a smooth function with support contained in $I_0$ such that $g(x) = 1$ for $x\in J_0.$ The identity (\ref{eq:pr1}) implies that $f(a) = f(b) = 0.$ Then (\ref{eq:pr4}) gives
$$
f(x) = \sum_{j=1}^\infty\lambda^{j-1}g\left(\psi_j(x)\right)\ \ \forall x\in (a,b).
$$ Finally we fix $y_0\in J_0,$ define $y_k = \varphi_k(y_0)$ and put $c = \lim_{k\to\infty}y_k$ ($c = a$ or $c = b$). Then $f(y_k) = \lambda^{k-1}g\left(\psi_k(y_k)\right) = \lambda^{k-1}$ and
$$
\lim_{k\to \infty}\left|f(y_k)\right| \neq \left|f(c)\right|,
$$ which is a contradiction.
\par\medskip
(3) In the case $\varphi(x) < x$ for every $x > a$ we have $\varphi_n(x)\downarrow a\ \ \forall x > a$ and we can proceed as in (2) to get a contradiction. We will discuss the case that $\varphi(x) > x$ for every $x > a.$ Then
$$\psi_n(x)\downarrow a\ \ \mbox{while}\ \ \varphi_n(x)\uparrow +\infty\ \ \forall x > a.$$ From (\ref{eq:pr2}) we obtain
$$
f(x) = -\sum_{k=0}^\infty\lambda^{-k-1}g\left(\varphi_k(x)\right)\ \ \forall x > a.
$$ We fix $x_0 > a$ and define $x_{k+1} = \psi(x_k).$ Let $I_0$ denote the open interval $\left(x_1, x_0\right)$ and let $J_0$ be a closed interval contained in $I_0$ and $g$ a smooth function with support contained in $I_0$ such that $g(x) = 1$ for $x\in J_0.$ Finally we fix $y_0\in J_0,$ define $y_k = \psi_k(y_0).$ Then $f(y_k) = -\lambda^{-k-1}g\left(\varphi_k(y_k)\right) = -\lambda^{-k-1}$ and
we can proceed as in case (2) to get a contradiction.
\par\medskip
(4) is analogous to (3).
\end{proof}

For $\varphi(x) = x + e^{-x^2},$ the composition operator $C_\varphi$ is not power bounded but we do not know whether it is mean ergodic or not (see \cite[Remark 1]{fernandez_galbis_jorda}). According to Proposition \ref{pr:increasing}, the spectrum of $C_\varphi$ contains the unit circle.
\par\medskip
We recall that a fixed point $a$ of $\varphi$ is said to be attracting if $|\varphi'(a)|<1$ and repelling if $|\varphi'(a)|>1.$

\begin{proposition}\label{pr:anillo}{\rm Let us assume that $a$ is an attracting  fixed point of the strictly increasing symbol $\varphi.$ Then
$$
\left\{\lambda\in {\mathbb C}:\ \varphi'(a) < |\lambda| < 1\right\} \subset \sigma(C_\varphi).
$$
 }
\end{proposition}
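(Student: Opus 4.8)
The plan is to argue by contradiction, exploiting that near an attracting fixed point $\varphi$ contracts at the rate $\varphi'(a)$, which is strictly slower than the geometric decay rate $|\lambda|$ that a resolvent would impose. Fix $\lambda$ with $\varphi'(a) < |\lambda| < 1$ and suppose $\lambda \notin \sigma(C_\varphi)$, so that for every $g \in \mathcal S(\mathbb R)$ there is a unique $f \in \mathcal S(\mathbb R)$ with $C_\varphi f - \lambda f = g$, and the identities (\ref{eq:pr1}), (\ref{eq:pr2}), (\ref{eq:pr4}) hold. Since $\varphi$ is strictly increasing and $0 \le \varphi'(a) < 1$, there is $\delta > 0$ with $a < \varphi(x) < x$ for $x \in (a, a+\delta]$, so the forward $\varphi$-orbit of any such point decreases monotonically to $a$. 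I would set up a fundamental domain by fixing $x_0 \in (a, a+\delta)$, putting $I_0 = (\varphi(x_0), x_0)$, and choosing a smooth $g$ supported in $I_0$ with $g \equiv 1$ on a closed subinterval $J_0$; then fix $y_0 \in J_0$.

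Letting $n \to \infty$ in (\ref{eq:pr4}) and using that $f$ is bounded while $|\lambda| < 1$ gives the closed form $f(x) = \sum_{j\ge 1}\lambda^{j-1} g(\psi_j(x))$. Two evaluations are decisive. First, at $x = a$ equation (\ref{eq:pr1}) reads $f(a)(1-\lambda) = g(a) = 0$, so $f(a) = 0$ because $\lambda \neq 1$. Second, writing $z_m = \varphi_m(y_0)$ and using $\psi_j(\varphi_m(y_0)) = \varphi_{m-j}(y_0)$ for $j \le m$ (and $\psi_{j-m}(y_0)$ for $j > m$), every term of the series vanishes except the one with $j = m$, since the orbit meets $\operatorname{supp} g \subset I_0$ only at index $0$; this yields the clean identity $f(z_m) = \lambda^{m-1}$.

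The last step is the quantitative one, which is where the hypothesis $|\lambda| > \varphi'(a)$ enters and which I expect to be the crux. Choosing $\mu'$ with $\varphi'(a) < \mu' < |\lambda|$ and shrinking $\delta$ so that $\varphi(x) - a \le \mu'(x-a)$ on $(a, a+\delta]$, iteration gives $0 < z_m - a \le (\mu')^m (y_0 - a)$. Hence the difference quotients of $f$ at $a$ satisfy
$$
\left|\frac{f(z_m) - f(a)}{z_m - a}\right| = \frac{|\lambda|^{m-1}}{z_m - a} \ge \frac{1}{|\lambda|\,(y_0-a)}\left(\frac{|\lambda|}{\mu'}\right)^m \to +\infty \quad (m\to\infty),
$$
since $|\lambda| > \mu'$, while $z_m \to a$. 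This contradicts the differentiability of $f \in \mathcal S(\mathbb R)$ at $a$, whence $\lambda \in \sigma(C_\varphi)$ and the inclusion follows. The delicate point is precisely matching the contraction rate of the orbit toward $a$ against the modulus of $\lambda$: the upper bound $z_m - a \le (\mu')^m (y_0-a)$ with $\mu' < |\lambda|$ is exactly what forces a cusp-type behaviour of $f$ at $a$ (morally $f(x) \sim (x-a)^s$ with $0 < \operatorname{Re} s < 1$ when $\varphi'(a)>0$), incompatible with smoothness.
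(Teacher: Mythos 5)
Your proof is correct and follows essentially the same route as the paper's: the series representation $f(x)=\sum_{j\ge 1}\lambda^{j-1}g\left(\psi_j(x)\right)$, a bump function equal to $1$ on a subinterval of the fundamental domain $(\varphi(x_0),x_0)$ giving $f\left(\varphi_m(y_0)\right)=\lambda^{m-1}$, and the mean-value contraction estimate $\varphi_m(y_0)-a\le \varepsilon^m(y_0-a)$ with $\varphi'(a)<\varepsilon<|\lambda|$, which makes the difference quotients of $f$ at $a$ blow up and contradicts smoothness. The only (harmless) difference is that you spell out $f(a)=0$ via the functional equation, a fact the paper uses implicitly.
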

\begin{proof} Let us denote by $\psi$ the inverse of the bijection  $\varphi:[a, +\infty) \to [a,+ \infty).$   Given $\lambda \in {\mathbb D},$  and $f, \, g \in {\mathcal S}({\mathbb R})$, the relation $C_\varphi f-\lambda f=g$, implies that
\begin{equation}\label{eq:anillo}
f(x) = \sum_{j=1}^\infty\lambda^{j-1}g\left(\psi_j(x)\right),\ \ \forall x > a.
\end{equation} We take $\varphi'(a) < \varepsilon < |\lambda|$ and choose $0 < \delta$ so that $\varphi'(x) < \varepsilon$ on $(a, a+\delta).$ For every $x\in (a, a+\delta),$ by the mean value theorem, we have $\varphi(x)<x$ hence the sequence $(\varphi_n(x))_n$ decreases to $a. $ In fact, \begin{equation}\label{mean_value}| \varphi_n(x)-a|\leq \varepsilon^n|x-a|. \end{equation} We fix $a < b < a+\delta$ and let $J$ be a closed interval contained in $(\varphi(b), b)$. We consider a smooth function $g$ whose support is contained in $(\varphi(b),b)$ and such that $g(x) = 1$ for every $x\in J.$ We check that $g$ cannot be in the range of $C_\varphi-\lambda I.$ We proceed by contradiction and assume that there is is $f\in {\mathcal S}({\mathbb R})$ such that $C_\varphi f-\lambda f=g.$ We take $y_0\in J$ and $y_k:=\varphi_k(y_0).$ Then $\psi_j(y_k)\in (\varphi(b),b)$ if and only if $k=j.$ Hence, by (\ref{eq:anillo}), $f(y_k)=\lambda^{k-1}.$
Finally, using (\ref{mean_value}),
$$
\left|\frac{f(y_k)-f(a)}{y_k-a}\right| \geq \frac{|\lambda|^{k-1}}{|y_0-a|\varepsilon^k},
$$ which goes to $\infty$ as $k\to \infty.$ This is a contradiction since $(y_k)_k$  decreases to $a.$
\end{proof}

\begin{corollary}{\rm Let us assume $\varphi(a) = a,$ $\varphi'(a) = 0,$ $\varphi$ strictly increasing. Then
$$
\left\{\lambda\in {\mathbb C}:\  |\lambda| \leq 1\right\} \subset \sigma(C_\varphi).
$$
 }
\end{corollary}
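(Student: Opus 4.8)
The plan is to deduce this corollary from Proposition~\ref{pr:anillo} by a limiting argument together with the elementary facts already collected in the paper. First I would observe that the hypotheses place us squarely in the setting of Proposition~\ref{pr:anillo}: since $\varphi'(a)=0$ and $\varphi$ is strictly increasing, $a$ is an attracting fixed point with $\varphi'(a)=0$. Applying the proposition with this value gives immediately
$$
\left\{\lambda\in{\mathbb C}:\ 0<|\lambda|<1\right\}\subset\sigma(C_\varphi),
$$
so the entire open punctured disc is already in the spectrum. It then remains only to add the two boundary pieces: the point $\lambda=0$ and the unit circle $\{|\lambda|=1\}$.

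Next I would dispatch these two remaining sets using results stated earlier. The unit circle is handled by Proposition~\ref{pr:increasing}: because $\varphi$ is a strictly increasing symbol which is not the identity (it has a fixed point at $a$ where $\varphi'(a)=0\neq 1$, so it cannot be the identity), that proposition yields $\{\lambda:\ |\lambda|=1\}\subset\sigma(C_\varphi)$. For the origin I would invoke Proposition~\ref{prop:elementary}(b): since $a$ is a fixed point of $\varphi$, the value $\varphi'(a)$ belongs to $\sigma(C_\varphi)$, and here $\varphi'(a)=0$, so $0\in\sigma(C_\varphi)$. Alternatively one can argue directly that $C_\varphi$ fails to be injective or surjective, but quoting Proposition~\ref{prop:elementary}(b) is the cleanest route.

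Combining the three inclusions---the open punctured disc from Proposition~\ref{pr:anillo}, the point $0$ from Proposition~\ref{prop:elementary}(b), and the unit circle from Proposition~\ref{pr:increasing}---exhausts the closed unit disc $\{\lambda:\ |\lambda|\leq 1\}$, which is precisely the asserted conclusion. I do not anticipate a genuine obstacle here: the corollary is essentially a bookkeeping assembly of three earlier results, the only mild subtlety being to confirm that $\varphi$ is not the identity so that Proposition~\ref{pr:increasing} applies, which is immediate from $\varphi'(a)=0$. The one point to state carefully is that $\varphi'(a)=0<|\lambda|$ for every $\lambda$ in the open punctured disc, so the hypothesis $\varphi'(a)<|\lambda|<1$ of Proposition~\ref{pr:anillo} is met by all such $\lambda$.
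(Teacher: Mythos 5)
Your proposal is correct and follows exactly the route the paper intends: the corollary is stated immediately after Proposition~\ref{pr:anillo} precisely because the open punctured disc comes from that proposition with $\varphi'(a)=0$, while the circle and the origin are supplied by Proposition~\ref{pr:increasing} and Proposition~\ref{prop:elementary}(b) respectively. Your verification that $\varphi$ is not the identity (so Proposition~\ref{pr:increasing} applies) and that $a$ is attracting in the paper's sense ($|\varphi'(a)|=0<1$) covers the only points needing care.
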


With obvious modifications, one can show

\begin{proposition}\label{pr:anillo_2}{\rm Let us assume that $a$ is a repelling fixed point of $\varphi$ and $\varphi$ strictly increasing. Then
$$
\left\{\lambda\in {\mathbb C}:\ 1\leq |\lambda |<\left|\varphi'(a)\right|\right\} \subset \sigma(C_\varphi).
$$
 }
\end{proposition}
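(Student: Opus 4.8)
The plan is to mirror the proof of Proposition \ref{pr:anillo}, exchanging the roles of $\varphi$ and its local inverse and replacing the condition $|\lambda|<1$ by $|\lambda|>1$. Since the whole unit circle already lies in $\sigma(C_\varphi)$ by Proposition \ref{pr:increasing}, it suffices to treat $1<|\lambda|<\varphi'(a)$ (recall that $\varphi'(a)=|\varphi'(a)|>1$ because $\varphi$ is increasing). So I would fix such a $\lambda$, assume for contradiction that $\lambda\notin\sigma(C_\varphi)$, and manufacture a function $f$ whose difference quotients at $a$ blow up, contradicting smoothness.

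First I would produce a forward series representation. Let $\psi$ denote the inverse of the bijection $\varphi:[a,\infty)\to[a,\infty)$; since $a$ is repelling one has $\varphi(x)>x$ for $x>a$, so forward orbits move away from $a$ while $\psi_k(x)\downarrow a$. Given $g\in{\mathcal S}({\mathbb R})$ and the unique $f$ with $C_\varphi f-\lambda f=g$, I would iterate (\ref{eq:pr2}) and divide by $\lambda^n$ to obtain $f(x)=\lambda^{-n}f(\varphi_n(x))-\sum_{k=0}^{n-1}\lambda^{-1-k}g(\varphi_k(x))$; because $|\lambda|>1$ and $f$ is bounded, the term $\lambda^{-n}f(\varphi_n(x))$ tends to $0$, yielding $f(x)=-\sum_{k\ge 0}\lambda^{-1-k}g(\varphi_k(x))$ for every $x>a$.

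Next comes the bump construction. I would pick $a<b$ close to $a$, a closed interval $J\subset(b,\varphi(b))$, and a smooth $g$ supported in $(b,\varphi(b))$ with $g\equiv 1$ on $J$; note that $a\notin\operatorname{supp} g$, so (\ref{eq:pr1}) evaluated at $x=a$ forces $(1-\lambda)f(a)=0$, hence $f(a)=0$. Fixing $y_0\in J$ and setting $y_k=\psi_k(y_0)\downarrow a$, I would verify the separation property that $\varphi_m(y_k)\in(b,\varphi(b))$ precisely when $m=k$: for $m<k$ one has $\varphi_m(y_k)=\psi_{k-m}(y_0)<b$, while for $m>k$ one has $\varphi_m(y_k)=\varphi_{m-k}(y_0)>\varphi(b)$. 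Thus the series collapses to a single surviving term and $f(y_k)=-\lambda^{-1-k}$.

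Finally the contradiction. Since $\psi'(a)=1/\varphi'(a)<1/|\lambda|$, I would choose $\varepsilon$ with $1/\varphi'(a)<\varepsilon<1/|\lambda|$ and $\delta>0$ so that $\psi'<\varepsilon$ on $(a,a+\delta)$; the mean value theorem then gives $|y_k-a|\le\varepsilon^k|y_0-a|$ for large $k$. Combined with $f(a)=0$ this yields $\left|\frac{f(y_k)-f(a)}{y_k-a}\right|\ge\frac{|\lambda|^{-1-k}}{\varepsilon^k|y_0-a|}=\frac{1}{|\lambda|\,|y_0-a|}\left(|\lambda|\varepsilon\right)^{-k}$, which tends to $\infty$ because $|\lambda|\varepsilon<1$; as $y_k\to a$ this contradicts the differentiability of $f$ at $a$. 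The one genuinely delicate step, exactly as in Proposition \ref{pr:anillo}, is that the hypothesis $|\lambda|<\varphi'(a)$ is used in precisely the place where $\varepsilon$ must be squeezed between $1/\varphi'(a)$ and $1/|\lambda|$; this is what confines the conclusion to the annulus of outer radius $|\varphi'(a)|$ and is the main obstacle to pushing the argument further.
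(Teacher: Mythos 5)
Your proposal is correct and is precisely the ``obvious modification'' of the proof of Proposition \ref{pr:anillo} that the paper alludes to (the paper gives no explicit proof): you exchange the roles of $\varphi$ and $\psi$, use the forward series $f(x)=-\sum_{k\ge 0}\lambda^{-1-k}g(\varphi_k(x))$ valid for $|\lambda|>1$, and squeeze $\varepsilon$ between $1/\varphi'(a)$ and $1/|\lambda|$ to blow up the difference quotients at $a$. Your additional observation that the circle $|\lambda|=1$ is already covered by Proposition \ref{pr:increasing} correctly disposes of the boundary case, where the forward series derivation would not apply.
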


\begin{proposition}\label{pr:circle-decreasing}{\rm Let $\varphi$ be a strictly decreasing symbol. Then
$$
\left\{\lambda\in {\mathbb C}:\ |\lambda| = 1\right\} \subset \sigma(C_\varphi)
$$ if and only if $\varphi\circ\varphi \neq I .$
 }
\end{proposition}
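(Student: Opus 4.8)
The plan is to prove both directions separately. For the easy direction, suppose $\varphi\circ\varphi = I$. Then $C_\varphi$ is an involution ($C_\varphi^2 = C_{\varphi\circ\varphi} = C_I = I$), so its spectrum is contained in $\{-1,1\}$. In fact, for $|\lambda|=1$ with $\lambda\neq\pm1$, the operator $C_\varphi - \lambda I$ is invertible with inverse $\tfrac{1}{1-\lambda^2}(C_\varphi + \lambda I)$, as one checks directly from $C_\varphi^2=I$. Hence no point of the unit circle other than possibly $\pm 1$ lies in the spectrum, so $\partial{\mathbb D}\not\subset\sigma(C_\varphi)$. This gives the contrapositive of the ``if'' implication.

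For the substantial direction, assume $\varphi\circ\varphi\neq I$ and show every $\lambda$ with $|\lambda|=1$ lies in $\sigma(C_\varphi)$. First I would dispose of the case where $\varphi$ has a fixed point $a$: then $1\in\sigma(C_\varphi)$ by Proposition \ref{prop:elementary}(a), and since $\varphi$ is strictly decreasing we have $\varphi'(a)<0$, so $\varphi'(a)\in\sigma(C_\varphi)$ by Proposition \ref{prop:elementary}(b); a strictly decreasing symbol has at most one fixed point. To capture the full circle, the natural move is to pass to $\varphi_2 = \varphi\circ\varphi$, which is strictly \emph{increasing} and, crucially, is not the identity by hypothesis. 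Applying Proposition \ref{pr:increasing} to $\varphi_2$ yields $\partial{\mathbb D}\subset\sigma(C_{\varphi_2})$. By the spectral mapping Proposition, $\sigma(C_{\varphi_2}) = \{\mu^2:\mu\in\sigma(C_\varphi)\}$, so for each $\lambda$ with $|\lambda|=1$ the point $\lambda^2\in\partial{\mathbb D}$ equals $\mu^2$ for some $\mu\in\sigma(C_\varphi)$, giving $\mu=\pm\lambda$, hence $\lambda\in\sigma(C_\varphi)$ or $-\lambda\in\sigma(C_\varphi)$.

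The main obstacle is precisely this last sign ambiguity: knowing $\lambda^2\in\sigma(C_{\varphi_2})$ only places one of $\pm\lambda$ in $\sigma(C_\varphi)$, not necessarily $\lambda$ itself. To close the gap I would argue that $\sigma(C_\varphi)\cap\partial{\mathbb D}$ is symmetric under $\lambda\mapsto-\lambda$, or directly adapt the construction in Proposition \ref{pr:increasing} to the decreasing setting. The cleanest route is the latter: mimic the proof of Proposition \ref{pr:increasing} working directly with $\varphi$ and the iteration identity (\ref{eq:pr4}), using that on any interval free of fixed points the orbits $(\varphi_{2n}(x))$ and $(\psi_{2n}(x))$ behave monotonically while $\varphi$ swaps the two sides. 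Choosing a bump function $g$ supported on one fundamental domain of $\varphi_2$ and evaluating the resulting $f$ along an orbit $y_k=\varphi_k(y_0)$ forces $f(y_k)$ to oscillate like $\lambda^{k-1}$ (now with alternating sign contributions from the decreasing map), contradicting continuity of $f$ at the limit point exactly as before. This realizes each individual $\lambda$ with $|\lambda|=1$ as a spectral value without the sign ambiguity.

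Between the spectral-mapping shortcut and the direct construction, I expect the direct construction to be necessary for a fully rigorous resolution of the sign issue, so I would present the reduction to $\varphi_2$ as motivation and then carry out the bump-function argument for $\varphi$ itself, taking care that the support of $g$ and the orbit points are chosen so that $\psi_j(y_k)$ hits the support only for $j=k$, which is where the strict monotonicity of $\varphi_2$ on fixed-point-free intervals does the real work.
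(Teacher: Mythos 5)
Your final argument is essentially the paper's own proof: the paper likewise reduces to the increasing case by fixing a fixed-point-free interval $(b,c)$ of $\varphi_2$ to the right of the unique fixed point $a$ of $\varphi$, observing that the odd inverse iterates $\psi_{2k+1}$ send $[b,c]$ into $(-\infty,a]$ so that a bump $g$ supported in a fundamental domain of $\varphi_2$ is hit only by the even iterates, and then running the contradiction of Proposition \ref{pr:increasing} (values of modulus $1$ along an orbit accumulating at an endpoint where $f$ vanishes), while your treatment of the converse via $C_\varphi^2=I$ and the explicit inverse $\frac{1}{1-\lambda^2}\left(C_\varphi+\lambda I\right)$ is equivalent to the paper's spectral-mapping remark. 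One caveat, which applies equally to the paper's write-up and so does not count against you: the endpoints $b,c$ are fixed points of $\varphi_2$ but not of $\varphi$, so the vanishing $f(b)=f(c)=0$ needed for the final contradiction comes from $(1-\lambda^2)f(b)=0$ rather than $(1-\lambda)f(a)=0$, hence the construction as described covers only $\lambda\neq\pm 1$; $\lambda=1$ is handled by the fixed point of $\varphi$, but $\lambda=-1$ requires a separate small argument that neither you nor the paper spells out.
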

\begin{proof}
 Let us assume $\varphi\circ\varphi \neq I$ and let $a$ denote the unique fixed point of $\varphi.$ We proceed by contradiction, so we assume there is $\lambda\notin \sigma(C_\varphi)$ with $|\lambda| = 1.$ Several possibilities can occur.
 \par\medskip
 (1) There exist $a\leq b < c$ such that $\varphi_2(b) = b, \varphi_2(c) = c$ and $\varphi_2(x) \neq x$ for every $x\in (b,c).$ For every smooth function $g$ whose support is contained in $(b,c)$ there is $f\in {\mathcal S}({\mathbb R})$ with $f\left(\varphi(x)\right) - \lambda f(x) = g(x).$ Then
 $$
 f(x) = \lambda^{2n} f\left(\psi_{2n}(x)\right) + \sum_{j=1}^{2n}\lambda^{j-1}g\left(\psi_j(x)\right).
 $$ Since $\psi\left([b,c]\right) \subset \psi\left([a, +\infty)\right) = (-\infty, a]$ and $\psi_{2k}\left([b,c]\right)\subset [b,c]$ we obtain $\psi_{2k+1}\left([b,c]\right)\subset (-\infty, a].$ Hence
 $$
 f(x) = \lambda^{2n} f\left(\psi_{2n}(x)\right) + \sum_{j=1}^{n}\lambda^{2j-1}g\left(\psi_{2j}(x)\right)\ \ \forall x\in (b,c).
 $$ Now we can argue as in the proof of Proposition \ref{pr:increasing} (case (2)) to get a contradiction.
 \par\medskip
 (2) There exists $b\geq a$ such that $\varphi_2(b) = b$ and $\varphi_2(x)\neq x$ for every $x > b.$ Since $\psi\left([b,+\infty)\right) \subset \psi\left([a, +\infty)\right) \subset (-\infty,a]$ and $\psi_{2k}\left([b, +\infty)\right) \subset [b, +\infty)$ we obtain $\psi_{2k+1}\left([b, +\infty)\right) \subset (-\infty, a].$ Hence
 $$
 f(x) = \lambda^{2n} f\left(\psi_{2n}(x)\right) + \sum_{j=1}^{n}\lambda^{2j-1}g\left(\psi_{2j}(x)\right)\ \ \forall x > b.
 $$ Now we can argue as in the proof of Proposition \ref{pr:increasing} (case (3)) to get a contradiction. The other possibilities can be treated as (1) or (2).
 \par\medskip
 Finally, let us assume that the unit circle is contained in $\sigma(C_\varphi).$ Since $\sigma(C_\varphi^2) \supset \left(\sigma(C_\varphi)\right)^2$ then the unit circle is contained in $\sigma(C_\varphi^2),$ which implies $\varphi\circ\varphi\neq I.$
\end{proof}

If $\varphi \circ \varphi =I$ then $\sigma(C_\varphi) = \{-1,\, 1\}.$ According to \cite[Proposition 3.7]{fernandez_galbis_jorda} and Proposition \ref{pr:circle-decreasing}, the condition $\left\{\lambda\in {\mathbb C}:\ |\lambda| = 1\right\} \subset \sigma(C_\varphi)$ characterizes the decreasing symbols $\varphi$ such that $C_\varphi$ is mean ergodic.


\begin{thebibliography}{99}

\bibitem{abr2013} A. Albanese, J. Bonet, W. J.  Ricker,  {\em Montel resolvents and uniformly mean ergodic semigroups of linear operators}. Quaest. Math. 36 (2013), {\bf 2}, 253--290.

\bibitem{albanese_bonet_werner} A. Albanese, J. Bonet, W. J. Ricker, {\em Dynamics and spectrum of the Ces\`aro operator on $C^\infty({\mathbb R}+)$.} Monatsh. Math. {\bf 181} (2016), 267--283.

\bibitem{bonet_domanski} J. Bonet, P. Doma\'nski, {\em A note on the spectrum of composition operators on spaces of real analytic functions.} Complex Anal. Oper. Theory {\bf 11} (2017), 161--174.

\bibitem{BFJ2007} J.  Bonet, L.  Frerick, E. Jord\'a,  {\em Extension of vector-valued holomorphic and harmonic functions.} Studia Math. {\bf 183} (2007), no. 3, 225--248.




\bibitem{galbis_jorda} A. Galbis, E. Jord\'a, {\em Composition operators on the Schwartz space.} Rev. Mat. Iberoam. {\bf 34} (2018), 397--412.

\bibitem{GE2004} K. G. Grosse-Erdmann, {\em A weak criterion for vector-valued holomorphy.} Math. Proc. Cambridge Philos. Soc. {\bf 136} (2004), no. 2, 399--411. 

\bibitem{fernandez_galbis_jorda} C. Fern\'andez, A. Galbis, E. Jord\'a, {\em Dynamics and spectra of composition operators on the Schwartz space.} J. Funct. Anal. {\bf 274} (2018), 3503--3530.

\bibitem{kw} N. Kenessey, J. Wengenroth, {\em Composition operators with closed range for smooth injective symbols ${\mathbb R}\to {\mathbb R}^d$.} J. Funct. Anal. {\bf 260} (2011), 2997--3006.

\bibitem{adam1} A. Przestacki, {\em Composition operators with closed range for one-dimensional smooth symbols.} J. Math. Anal. Appl. {\bf 399} (2013), 225--228.

\bibitem{adam2} A. Przestacki, {\em Characterization of composition operators with closed range for one-dimensional smooth symbols.} J. Funct. Anal. {\bf 266} (2014), 5847--5857.


\bibitem{adam2.5} A. Przestacki, {\em Corrigendum to "Characterization of composition operators with closed range for one-dimensional smooth symbols''} J. Funct. Anal. \textbf{266} (2014) 5847--5857]. J. Funct. Anal. \textbf{269} (2015), 2665--2667.

\bibitem{adam3} A. Przestacki, {\em Dynamical properties of weighted composition operators on the space of smooth functions.} J. Math. Anal. Appl. \textbf{445} (2017), 1097--1113.

\bibitem{Vasilescu} F. H Vasilescu, {\em Analytic functional calculus and spectral decompositions.} Translated from the Romanian. Mathematics and its Applications (East European Series), 1. D. Reidel Publishing Co., Dordrecht (1982).


\end{thebibliography}
\end{document}